\documentclass[reqno]{amsart}
\usepackage{amsthm, amscd, amsfonts, amssymb, graphicx,tikz, color, environ}

\usepackage{paralist}
\usepackage{cite}
\usepackage{bigints}
\usepackage{dsfont}
\usepackage{empheq}
\usepackage{amssymb}
\usepackage{cases}
\usepackage{enumitem}
\allowdisplaybreaks
\usepackage{algpseudocode}
\usepackage{caption}
\usepackage{booktabs}
\usepackage{bbold}
\usepackage{float}
\usepackage{subcaption}
\usepackage[ruled,vlined,linesnumbered,noresetcount]{algorithm2e}

\usepackage{hyperref}
\usepackage[top=2.7cm, bottom=2.7cm, left=3cm, right=3cm]{geometry}

\theoremstyle{plain}
\newtheorem{theorem}{Theorem}[section]
\newtheorem{example}{Example}

\newtheorem{lemma}[theorem]{Lemma}

\newtheorem{definition}[theorem]{Definition}
\theoremstyle{remark}
\newtheorem{remark}[theorem]{Remark}
\numberwithin{equation}{section}

\title[Backward problem for a degenerate Hamilton-Jacobi equation] 
{Backward problem for a degenerate viscous Hamilton-Jacobi equation: stability and numerical identification}

\author{S. E. Chorfi}
\address{S. E. Chorfi, L. Maniar, Cadi Ayyad University, UCA, Faculty of Sciences Semlalia, Laboratory of Mathematics, Modeling and Automatic Systems, B.P. 2390, Marrakesh, Morocco}
\email{s.chorfi@uca.ac.ma, maniar@uca.ac.ma}

\author{A. Habbal}
\address{A. Habbal, M. Jahid, L. Maniar, A. Ratnani\\ The UM6P-Vanguard Center, University Mohammed VI Polytechnic, Rabat campus, Morocco}
\email{Abderrahmane.Habbal@um6p.ma, Meryeme.JAHID@um6p.ma, Lahcen.Maniar@um6p.ma, Ahmed.Ratnani@um6p.ma}

\address{A. Habbal, University of Côte d’Azur, Inria, LJAD, Parc Valrose 06108 Nice, France}
\email{abderrahmane.habbal@univ-cotedazur.fr}

\author{M. Jahid}

\author{L. Maniar}

\author{A. Ratnani} 

\keywords{Degenerate Hamilton-Jacobi equation, Backward problem, Carleman estimate, Conjugate Gradient, Van Cittert iteration}
 
\makeatletter 
\@namedef{subjclassname@2020}{%
  \textup{2020} Mathematics Subject Classification}
\makeatother

\subjclass[2020]{Primary: 35R30, 35K65, 35F21; Secondary: 35R25, 65M30}

\begin{document}
\begin{abstract}
This work is devoted to the analysis of the backward problem for a viscous Hamilton-Jacobi equation with degenerate diffusion and a general Hamiltonian that is not necessarily quadratic. First, we focus on linear degenerate parabolic equations in the nondivergence setting. We prove the conditional stability of the backward problem using Carleman estimates. Then, by a linearization technique, we prove similar results for the nonlinear viscous Hamilton-Jacobi equation. Regarding numerical identification, we first investigate the linear degenerate equation with noisy data using the adjoint state method, combined with a Conjugate Gradient algorithm, to solve the associated minimization problem. Finally, the numerical identification for the nonlinear viscous Hamilton-Jacobi equation is investigated by the Van Cittert iteration. Numerical tests are presented to show the performance of the proposed algorithms.
\end{abstract}

\maketitle

\section{Introduction}
Degenerate viscous Hamilton-Jacobi equations naturally arise in a range of applications, including optimal control, front propagation, and mean-field game theory, where the diffusion coefficient may vanish or become singular in certain parts of the domain. Backward problems, which involve recovering past states of a system from final-time data, are particularly challenging in this degenerate setting due to the loss of regularity in addition to ill-posedness. Such problems are relevant in many areas of applied science and engineering, including quantum mechanics, stochastic control, game theory, and finance; see, e.g., \cite{BI99} and the references therein. Despite their importance, backward problems for degenerate viscous Hamilton-Jacobi equations have received limited attention in the literature, motivating the present study.

This work investigates the stability of the backward problem and numerical identification for a degenerate Viscous Hamilton-Jacobi (VHJ for short) equation in the one-dimensional case. Throughout the paper, we consider the following notations
$$\Omega :=(0,1),\qquad Q :=\Omega\times (0,T), \qquad \Sigma:=\left\lbrace 0,1\right\rbrace \times(0, T),$$
where $T>0$ is the terminal time.

To motivate our study, let us start with the simple viscous Hamilton-Jacobi equation 
\begin{equation}
\left\{
\begin{aligned}
& u_{t}(x,t) - a(x) u_{xx}(x,t)+ \frac{1}{q}\vert u_{x}(x,t)\vert^q= 0, &&(x,t)\in Q,\\
&u(x,t)=0,\hspace{6.5cm} &&(x,t)\in \Sigma,\\
&u(x,0)=f(x), && x\in \Omega,
\end{aligned}\label{HJBEQ}
\right.
\end{equation}
where we assume hereafter that $q \geqslant 1$ and the diffusion coefficient degenerates (i.e., vanishes) at the endpoints $x=0$ and $x=1$. More precisely, we introduce the following assumption:\\
\noindent\textbf{Assumption I.}
\begin{itemize}
    \item[(i)] $a\in C([0,1])$ such that $a(x)>0$ for all $x\in \Omega$;
    \item[(ii)] $a(0)=a(1)=0$.
\end{itemize}
\bigskip

\noindent{\bf Backward problem.}\\
We seek to determine unknown initial or intermediate states from the final measurement in equations such as \eqref{HJBEQ}, that is, for $t_0\in [0,T)$, recover $u(\cdot,t_0)$ from given measured data of $u(\cdot,T)$. The aim of this paper is twofold:
\begin{itemize}
    \item[(i)] Theoretical: prove conditional stability by Carleman estimates.
    \item[(ii)] Numerical: design efficient algorithms for numerical identification.
\end{itemize}

The solution to the VHJ equation \eqref{HJBEQ} can be interpreted, by the Dynamic Programming Principle, as the value function associated with a stochastic optimal control problem (see e.g. \cite{Barles04, zua11}), i.e.,
$$u(x,t)=\sup_{\alpha_t \in \mathcal{A}}\mathbb{E}_x\left\lbrace f(X_t)\mathds{1}_{t<\tau_x}+\int_0^{t \wedge\tau_x}c_q\vert \alpha_s\vert^{\frac{q}{q-1}}ds\right\rbrace,$$
assuming here $q>1$ for simplicity, where $\mathbb{E}_x$ denotes the conditional expectation by the random event $\{X_0=x\},$ $c_q$ serves as a normalization factor and $X_t$ is the controlled process governed by the stochastic differential equation
\begin{equation}
\left\{
\begin{aligned}
& dX_{t}=\alpha_tdt+  \sigma(X_t)dB_t,\\
&X_0=x\in \Omega,
\end{aligned}\label{sde}
\right.
\end{equation}
with $B_t$ is a one-dimensional standard Brownian motion (on a complete filtered probability space), $(\alpha_s)_{s\geqslant 0}$ is a control belonging to a set $\mathcal{A}$ of admissible processes, and $\tau_x$ denotes the exit time from $\Omega$:
$$\tau_x=\inf\left\lbrace t>0\colon \; X_t\notin \Omega\right\rbrace.$$ The diffusion coefficient $\sigma$ in \eqref{sde} can vanish at some points, and so can $ a(x)= \frac{1}{2}\sigma^2(x)$. An interesting model is given by $a(x)=x^\mu (1-x)^\nu, \; \mu,\nu >0$, which is, for instance, related to the Wright-Fisher diffusion process used in population genetics. See \cite[Examples, p.~48]{Ma68} and also \cite[Ch.~VIII, \S 8]{Sh92} for multidimensional models. The VHJ equation also appears in some physical models of surface growth where it is linked to the Kardar-Parisi-Zhang (KPZ) equation for $q=2$, see, e.g., \cite{Kardar1986}.

Regarding the existing literature on the VHJ equation in the nondegenerate case, several works have investigated the well-posedness in the whole space $\mathbb{R}^N,$ either in the classical sense; see, e.g., \cite{AmourBenArtzi1998,BenArtzi1992} and \cite{BenArtzi2002}, or in the sense of distributions (see, e.g., \cite{BenArtziSoupletWeissler2002}). In the case of a bounded domain $\Omega\subset \mathbb{R}^N,$ under appropriate assumptions on the initial and boundary data, the solution exists on a time interval $[0,T^*)$. Moreover, as $t\to T^*,$ the gradient blows up at the boundary while the solution is bounded (see \cite{FilaLieberman1994, Souplet2002}).

Degenerate parabolic operators, whether in divergence or nondivergence form, have been the subject of numerous studies in the literature. Many problems in physics, biology, and economics naturally involve them. For controllability problems, we refer to \cite{CFR08} where the authors proved the null controllability for degenerate parabolic equations in nondivergence form in one dimension. Then they extended this result to the semilinear problem. In \cite{Alab2006}, the null controllability of degenerate linear and semilinear parabolic equations in divergence form has been proven. The authors in \cite{CR06} have studied the regional null controllability of semilinear degenerate parabolic equations with nonlinearities involving the first derivative. In this case, the classical null controllability results are invalid due to degeneracy. In \cite{Ait11}, the degenerate semilinear system was investigated with two distinct degeneracies, $a_1$ and $a_2$. They have proven the null controllability results using linearization. Moreover, in \cite{Atifi17}, the authors study the numerical null controllability of a degenerate and singular parabolic problem in divergence form. We also mention some results on stability estimates for inverse problems. In \cite{CR10}, the Lipschitz stability for an inverse source problem using Carleman estimates has been established. Furthermore, in \cite{Tort10}, the authors studied the determination of source terms in a degenerate parabolic equation from the knowledge of a locally distributed observation. The authors of \cite{BOUTAA14} have proven the Lipschitz stability for an inverse problem of linear degenerate parabolic systems with one force in the interior degeneracy case.

Moreover, backward problems for nondegenerate (uniformly elliptic) parabolic equations have been widely investigated in numerous papers; we refer to \cite{Kl06} and the references therein for a method based on Carleman estimates, and to \cite{Ya} for a survey, particularly for nondegenerate equations. For degenerate equations, we refer to \cite{Ka15} for Carleman estimates. Recently, in \cite{CY23}, a degenerate parabolic equation in divergence form has been studied for higher dimensions by the Carleman estimates method. We emphasize that the backward problems we consider for degenerate viscous Hamilton-Jacobi equations are relevant to the study of degenerate mean-field game systems; see, e.g., the recent work \cite{CHJMR25}. As for degenerate problems from time-integral measurements, we refer to \cite{Kam20} and the cited bibliography. Recently, special attention has been devoted to backward problems (inverse design) for VHJ equations; we refer to \cite{EZ20, CP20} for first-order HJ equations. In \cite{CP24}, a related problem has been studied, taking into account localizations. The recent paper \cite{EZ23} investigates the reachable set for a multi-dimensional case. However, the literature on backward degenerate problems in nondivergence settings and second-order VHJ equations is scarce, and most of the works impose strong assumptions on the degenerate coefficient, which entails limited applicability.

In this paper, we first prove some conditional stability estimates for corresponding backward degenerate problems using Carleman estimates. Furthermore, we investigate the numerical identification of initial data from noisy final-time data. To this purpose, we start with the linear part of the VHJ equation, extending the methodology used in \cite{Has17}. We utilize the adjoint-state methodology with a gradient-type iteration to approximate the solution. Among recent studies, the paper \cite{Chorfi23} studies the identification of initial temperatures in the heat equation with dynamic boundary conditions. In the same spirit, we can refer to \cite{Atifi20} for theoretical and numerical analysis of a backward problem involving the identification of the initial data in a 2D degenerate parabolic equation. We also refer to \cite{Hassan2007} and \cite{AIT22}, where a similar approach was applied to inverse source problems. For more information, we recommend the excellent book \cite{Has17} that contains more details about this approach. Moreover, we consider the numerical identification of initial data for the nonlinear VHJ equation using Van Cittert's iteration, known in image reconstruction. The latter was employed for the first time in \cite{Carasso} for backward problems for linear and nonlinear parabolic equations with nondegenerate diffusion. Here, we extend the existing analysis to a viscous Hamilton-Jacobi equation with degenerate diffusion.

The following sections of this paper are organized as follows. In Section \ref{sec2}, we establish conditional stability for a linear degenerate equation in a nondivergence setting via Carleman estimates. In Section \ref{sec3}, we prove the conditional stability for a nonlinear VHJ equation following a similar strategy. Then Section \ref{sec4} is devoted to identifying the initial data for a linearized equation using the adjoint methodology combined with a Conjugate Gradient algorithm. Section \ref{sec5} presents the algorithmic implementation of the gradient formula used to numerically identify the initial data in degenerate parabolic equations. For the nonlinear viscous Hamilton-Jacobi equation, the initial datum is recovered through Van Cittert's iterative method. Finally, the conclusion and final comments will be mentioned in Section \ref{sec6}.
\section{Linear degenerate parabolic equation}\label{sec2}
In this section, we will prove a Carleman estimate for the linear part of the VHJ equation \eqref{HJBEQ}. This will be the key ingredient to tackle the nonlinear VHJ equation.

First, we succinctly review key results on well-posedness and regularity for degenerate parabolic equations in the nondivergence setting. We refer to \cite{CFR08} for more details.

\subsection{Degenerate equations in the nondivergence setting}
\subsubsection{\bf Wellposedness}
We consider the following degenerate parabolic equation 
\begin{equation}
\left\{
\begin{aligned}
& u_{t}(x,t) -a(x) u_{xx}(x,t)= G(x,t), &&\quad \text { in } Q,\\
& u(x,t)=0, &&\quad \text{ on } \Sigma,\\
&u(x,0)=f(x), &&\;\quad x\in \Omega.
\label{eq1to1}
\end{aligned}
\right.
\end{equation}
Let us consider the natural Hilbert spaces to study \eqref{eq1to1}:
$$L^2_{\frac{1}{a}}(0,1)=\left\lbrace u 
 \in L^2(0,1)\; |\;\|u\|_{\frac{1}{a}}<\infty \right\rbrace, \; \; \|u\|_{\frac{1}{a}}^2=\int_{0}^1u^2 \frac{1}{a} dx;$$
$$H^1_{\frac{1}{a}}(0,1)=L^2_{\frac{1}{a}}(0,1) \cap H^1_{0}(0,1), \; \; \|u\|_{1,\frac{1}{a}}^2= \|u\|_{\frac{1}{a}}^2+\| u_{x}\|_{L^2(0,1)}^2;$$
$$H^2_{\frac{1}{a}}(0,1)=\left\lbrace u 
 \in H^1_{\frac{1}{a}}(0,1)\; | \; a u_{xx}\in L^2_{\frac{1}{a}}(0,1)\right\rbrace, \; \; \|u\|_{2,\frac{1}{a}}^2= \|u\|_{1,\frac{1}{a}}^2+\int_{0}^1 au_{xx}^2  dx.$$
Then, we consider the linear unbounded operator $(A,D(A))$ defined by
$$ Au=au_{xx}, \qquad u \in  D(A):= H^2_{\frac{1}{a}}(0,1).$$
Additionally, the following Green's formula holds:
\begin{lemma}
     Given $(u,v) \in H^2_{\frac{1}{a}}(0,1) \times H^1_{\frac{1}{a}}(0,1)$, we have 
     $$ \int_{0}^1 u_{xx}v dx= - \int_{0}^1 u_{x}v_{x} dx .$$
\end{lemma}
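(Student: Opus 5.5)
The idea is to integrate by parts on a compact subinterval $[\varepsilon,1-\varepsilon]$ and then let $\varepsilon\to0$. The main point is that the boundary term $u_x v$ vanishes at the degenerate endpoints. No assumption beyond Assumption I (continuity of $a$, $a>0$ in $\Omega$, $a(0)=a(1)=0$) should be needed.

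\textbf{Step 1: regularity.} Since $u\in H^2_{\frac1a}(0,1)$, we have $au_{xx}\in L^2_{\frac1a}$. Hence
$$\int_0^1 u_{xx}^2\,a\,dx=\int_0^1 (au_{xx})^2\frac1a\,dx<\infty .$$
Because $a$ is continuous and positive on $\Omega$, it is bounded below on every $[\varepsilon,1-\varepsilon]$. So $u\in H^2(\varepsilon,1-\varepsilon)$, $u_x$ is absolutely continuous there, and $v\in H^1$. The classical integration by parts on $(\varepsilon,1-\varepsilon)$ then gives
$$\int_\varepsilon^{1-\varepsilon}u_{xx}v\,dx=-\int_\varepsilon^{1-\varepsilon}u_xv_x\,dx+u_x(1-\varepsilon)v(1-\varepsilon)-u_x(\varepsilon)v(\varepsilon).$$

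\textbf{Step 2: passing to the limit in the integrals.} The right-hand integral converges to $\int_0^1u_xv_x$ because $u_x,v_x\in L^2$. For the left-hand side, Cauchy--Schwarz gives
$$|u_{xx}v|=\sqrt a\,|u_{xx}|\cdot\frac{|v|}{\sqrt a},$$
and $v\in L^2_{\frac1a}$, so $u_{xx}v\in L^1(0,1)$. Dominated convergence then yields $\int_0^1u_{xx}v$. It follows that the boundary term $u_x(1-\varepsilon)v(1-\varepsilon)-u_x(\varepsilon)v(\varepsilon)$ has a finite limit, and it remains to show this limit is zero.

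\textbf{Step 3: vanishing of the boundary terms (the main obstacle).} Take the endpoint $0$; the case $x=1$ is symmetric. Set $g(x)=u_x(x)v(x)$ on $(0,1/2]$. By Step 2, $g(\varepsilon)$ has a finite limit $L$ as $\varepsilon\to0$. Note that
$$g'=u_{xx}v+u_xv_x\in L^1(0,1/2),$$
which gives the existence of the limit directly. Suppose $L\neq0$. Then $|u_x(x)v(x)|\ge |L|/2$ near $0$. We also have two facts:
\begin{itemize}
\item $v(0)=0$ and $v\in H^1_0$, so $|v(x)|\le \sqrt{x}\,\|v_x\|_{L^2(0,x)}=o(\sqrt x)$;
\item $u_x\in L^2$.
\end{itemize}
Combining these,
$$|u_x(x)|\ge \frac{|L|}{2|v(x)|}\ge \frac{c}{\sqrt x\,\|v_x\|_{L^2(0,x)}}.$$
Hence $|u_x|^2\ge c^2/\bigl(x\,\eta(x)\bigr)$ with $\eta(x)\to0$, which is not integrable near $0$. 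This contradicts $u_x\in L^2$, so $L=0$.

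Only the condition $v\in H^1_0$ (together with $u_x\in L^2$ and the weighted integrability used in Step 2) enters this step. If this route failed I would instead use $\frac{v}{\sqrt a}\in L^2$ together with $\sqrt a\,u_{xx}\in L^2$ to estimate $g$. With $L=0$ at both endpoints, letting $\varepsilon\to0$ proves the formula.
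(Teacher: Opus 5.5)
Your proof is correct. The paper gives no proof of this Green formula; it only recalls it from the framework of \cite{CFR08}. Your argument is the standard one for it:
\begin{itemize}
\item the weighted Cauchy--Schwarz bound $|u_{xx}v|\le \sqrt{a}\,|u_{xx}|\cdot |v|/\sqrt{a}$ makes $(u_xv)_x=u_{xx}v+u_xv_x$ integrable on $(0,1)$, so $u_xv$ has finite limits at both endpoints;
\item a nonzero limit $L$ at $0$ would force $|u_x(x)|\ge |L|/\bigl(2\sqrt{x}\,\|v_x\|_{L^2(0,1)}\bigr)$ near $0$, which contradicts $u_x\in L^2(0,1)$.
\end{itemize}

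Two small remarks. First, you do not need $\eta(x)\to 0$: the crude bound $|v(x)|\le\sqrt{x}\,\|v_x\|_{L^2(0,1)}$ already makes $|u_x|^2\gtrsim 1/x$ non-integrable. Second, Step 1 tacitly reads the definition of $H^2_{\frac{1}{a}}(0,1)$ as saying that $u_{xx}$ is a locally integrable function with $\int_0^1 a\,u_{xx}^2\,dx<\infty$. That is the natural interpretation, since $a$ is only continuous. As you observe, the vanishing of $a$ at the endpoints is never used.
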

Next, we recall the semigroup generation by $(A,D(A))$. We refer to \cite{Paz83} for the terminology.
\begin{theorem}\label{thmg1}
The operator $(A,D(A))$ is self-adjoint and m-dissipative \\ in $L_{\frac{1}{a}}^2(0,1)$. It generates then an analytic $C_0$-semigroup on $L_{\frac{1}{a}}^2(0,1).$
\end{theorem}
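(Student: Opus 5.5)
The plan is to prove that $A$ is symmetric and dissipative in $L^2_{\frac1a}(0,1)$, then that $I-A$ is surjective. These two facts give m-dissipativity, and self-adjointness follows from symmetry plus maximality. Analyticity then comes for free, since the generator is self-adjoint and nonpositive. Throughout, the inner product is $\langle u,v\rangle_{\frac1a}=\int_0^1 uv\,\frac{1}{a}\,dx$. The key observation is that the weight $\frac1a$ cancels the coefficient $a$: for $u\in D(A)$ and $v\in H^1_{\frac1a}(0,1)$,
$$\langle Au,v\rangle_{\frac1a}=\int_0^1 u_{xx}v\,dx=-\int_0^1 u_xv_x\,dx,$$
by the Green formula of the preceding Lemma. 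Note that $u_{xx}v$ is integrable, since $\sqrt a\,u_{xx}\in L^2$ and $v/\sqrt a\in L^2$.

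First, taking $v=u$ gives $\langle Au,u\rangle_{\frac1a}=-\|u_x\|^2_{L^2}\le 0$, so $A$ is dissipative. Taking $u,v\in D(A)$ gives $\langle Au,v\rangle_{\frac1a}=\langle u,Av\rangle_{\frac1a}$, so $A$ is symmetric. I would also check that $D(A)$ is dense in $L^2_{\frac1a}(0,1)$, since it contains $C_c^\infty(0,1)$, which is dense because the weight is locally bounded in $(0,1)$.

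Second, for the range condition, fix $h\in L^2_{\frac1a}(0,1)$. I would apply Lax--Milgram on the Hilbert space $H^1_{\frac1a}(0,1)$ with the bilinear form
$$\mathcal B(u,v)=\int_0^1 \frac{uv}{a}\,dx+\int_0^1 u_xv_x\,dx,$$
which is exactly the inner product of that space, hence coercive and continuous. The right-hand side is $v\mapsto\int_0^1 hv/a\,dx$, which is continuous by Cauchy--Schwarz in $L^2_{\frac1a}$. This produces a unique $u\in H^1_{\frac1a}$ with $\mathcal B(u,v)=\int hv/a$ for all $v$. Testing with $v\in C_c^\infty(0,1)$ yields $u_{xx}=(u-h)/a$ in the distributional sense, so $au_{xx}=u-h\in L^2_{\frac1a}$. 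Hence $u\in D(A)$ and $u-Au=h$. This step requires completeness of $H^1_{\frac1a}(0,1)$, which I would verify directly: a Cauchy sequence converges in $H^1_0$ and in $L^2_{\frac1a}$, and the limits coincide a.e. This regularity identification is the only delicate point. In particular, one must not need boundary behaviour of $u_x$ beyond what the Lemma already encodes.

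Third, a symmetric operator $A$ with $\mathrm{Range}(I-A)$ equal to the whole space is self-adjoint. Indeed, if $v\in D(A^*)$, pick $w\in D(A)$ with $(I-A)w=(I-A^*)v$. Symmetry then gives $\langle v-w,(I-A)z\rangle=0$ for all $z\in D(A)$, so $v=w$ by surjectivity. By Lumer--Phillips, $A$ generates a contraction $C_0$-semigroup. Finally, a self-adjoint operator with $A\le 0$ has spectrum in $(-\infty,0]$ and satisfies the sectorial resolvent bound $\|(\lambda-A)^{-1}\|\le 1/\operatorname{dist}(\lambda,(-\infty,0])$, valid by the spectral theorem. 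This gives analyticity of the semigroup, e.g. by \cite{Paz83}.

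I expect the main obstacle to be the second step: making sure the Lax--Milgram solution truly lies in $H^2_{\frac1a}$ and that the Green formula applies to it. Both are settled by the distributional identification above together with the Lemma.
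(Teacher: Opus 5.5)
The paper does not prove this theorem; it only recalls it from \cite{CFR08}. Your argument is correct and is essentially the standard proof given in that reference:
\begin{itemize}
\item Green's formula gives symmetry and $\langle Au,u\rangle_{\frac1a}=-\|u_x\|^2_{L^2}\le 0$.
\item Lax--Milgram (here simply Riesz) on $H^1_{\frac1a}(0,1)$, together with the distributional identity $au_{xx}=u-h$, gives surjectivity of $I-A$.
\item Symmetry plus m-dissipativity yields self-adjointness, and the spectral theorem then gives an analytic contraction semigroup.
\end{itemize}

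One minor caution on the last step. If you cite the resolvent characterization of analytic semigroups in \cite{Paz83} literally, that theorem assumes $0\in\rho(A)$, and this can fail under Assumption I alone. For instance, with $a(x)=x^4(1-x)^4$, bumps $\phi((x-\varepsilon)/\varepsilon)$ concentrating at $x=0$ give $\|u_x\|^2_{L^2}/\|u\|^2_{\frac1a}=O(\varepsilon^2)$, so $0\in\sigma(A)$. To avoid this, apply the criterion to $A-I$, or define $e^{zA}$ for $\operatorname{Re} z>0$ directly by the functional calculus.
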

Consequently, the following well-posedness and regularity result holds.
\begin{theorem}\label{thmgenni}
     For all $ G\in L^{2}(0,T; L^{2}_{\frac{1}{a}}(0,1))$ and $u_{0} \in L^{2}_{\frac{1}{a}}(0,1)$, there is a unique weak solution $u \in C([0,T],L^{2}_{\frac{1}{a}}(0,1))\cap  L^{2}(0,T; H^{1}_{\frac{1}{a}}(0,1) )$ of \eqref{eq1to1}. Moreover, if $f\in H^{1}_{\frac{1}{a}}(0,1) $, then 
     $$u \in H^1\left(0,T; L^2_{\frac{1}{a}}(0,1)\right)\cap L^2\left(0,T;H^{1}_{\frac{1}{a}}(0,1)\right)\cap C\left([0,T];H^{1}_{\frac{1}{a}}(0,1)\right),$$
     and there exists a positive constant $C$ (independent of $(u_0,G)$) such that
     $$ \sup_{t \in [0,T]}\| u\|^2_{L^{2}_{\frac{1}{a}}(0,1)}+\int_{0}^1\| u(t)\|^2_{H^{1}_{\frac{1}{a}}(0,1)}dt \leqslant C\left(\| u_{0}\|^2_{L^{2}_{\frac{1}{a}}(0,1)}+ \| G\|^2_{L^{2}_{\frac{1}{a}}(Q)}\right).$$
 \end{theorem}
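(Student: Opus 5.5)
The plan is to deduce Theorem \ref{thmgenni} from the semigroup generation of Theorem \ref{thmg1}, using the standard theory of abstract Cauchy problems in the Hilbert space $H:=L^2_{\frac{1}{a}}(0,1)$. We rewrite \eqref{eq1to1} as $u'(t)=Au(t)+G(t)$, $u(0)=u_0$, with $A=a\partial_{xx}$ and $D(A)=H^2_{\frac1a}(0,1)$.

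\textbf{Existence, uniqueness and $C([0,T];H)$.} Since $A$ generates a $C_0$-semigroup $(e^{tA})_{t\ge0}$ on $H$, for $G\in L^2(0,T;H)$ and $u_0\in H$ the mild solution
$$u(t)=e^{tA}u_0+\int_0^t e^{(t-s)A}G(s)\,ds$$
belongs to $C([0,T];H)$. Uniqueness of weak solutions follows from the standard equivalence of weak and mild solutions \cite{Paz83}.

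\textbf{The $L^2(0,T;H^1_{\frac1a})$ bound.} First take $u_0\in D(A)$ and $G\in C^1([0,T];H)$, so that $u$ is a strong solution. Multiply the equation by $u/a$ and integrate over $\Omega$. The key observation is that the weight $\frac1a$ cancels the degeneracy: $\langle Au,u\rangle_H=\int_0^1 u_{xx}u\,dx=-\int_0^1 u_x^2\,dx$ by the Green formula stated above. This gives
$$\frac12\frac{d}{dt}\|u\|_H^2+\|u_x\|_{L^2}^2=\langle G,u\rangle_H\le \tfrac12\|G\|_H^2+\tfrac12\|u\|_H^2 .$$
Gronwall's lemma then bounds $\sup_t\|u(t)\|_H^2$, and integrating in time bounds $\int_0^T\|u_x\|^2\,dt$. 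Since $\|u\|_{1,\frac1a}^2=\|u\|_H^2+\|u_x\|^2$, we obtain the stated estimate with $C=C(T)$. By density of $D(A)\times C^1([0,T];H)$ in $H\times L^2(0,T;H)$ and linearity, the estimate passes to the limit, which gives $u\in L^2(0,T;H^1_{\frac1a})$.

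\textbf{Higher regularity for $f\in H^1_{\frac1a}$.} Again work with smooth data and multiply the equation by $-u_{xx}=-Au/a$, i.e.\ take the $H$-inner product with $-Au$. The Green formula gives $\langle u_t,-Au\rangle_H=-\int_0^1 u_tu_{xx}\,dx=\frac12\frac{d}{dt}\|u_x\|^2$. Hence
$$\frac12\frac{d}{dt}\|u_x\|^2+\|Au\|_H^2=-\langle G,Au\rangle_H\le\tfrac12\|G\|_H^2+\tfrac12\|Au\|_H^2.$$
This controls $\sup_t\|u_x\|^2$ and $\int_0^T\|Au\|_H^2\,dt$, and then $u_t=Au+G\in L^2(0,T;H)$. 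Continuity $u\in C([0,T];H^1_{\frac1a})$ follows from the interpolation result that $H^1(0,T;H)\cap L^2(0,T;D(A))$ embeds continuously into $C([0,T];[H,D(A)]_{1/2})$, together with the identification $[H,D(A)]_{1/2}=D((-A)^{1/2})=H^1_{\frac1a}$. This identification holds because $A$ is self-adjoint and nonpositive with form $\int u_xv_x$, whose form domain is $H^1_{\frac1a}$. Alternatively, one can approximate and use that the energy identity holds in the limit.

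\textbf{Main obstacle.} The main difficulty is justifying the integrations by parts for the degenerate operator, namely applying the Green formula to $(u,u)$ and $(u,u_t)$. This requires $u(t)\in D(A)$ and $u_t(t)\in H^1_{\frac1a}$, so the computations must be carried out on regularized data, using analyticity of the semigroup for $t>0$, before passing to the limit. The identification of the form domain with $H^1_{\frac1a}$ is the other delicate point. Here we would rely on \cite{CFR08}.
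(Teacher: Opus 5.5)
Your proposal is correct and takes the same route as the paper. The paper gives no argument beyond deducing the result from the semigroup generation in Theorem \ref{thmg1} and deferring details to \cite{CFR08}; your energy estimates (testing with $u/a$ and with $-u_{xx}$ via the Green formula, then density), together with the form-domain identification $D((-A)^{1/2})=H^1_{\frac1a}(0,1)$, supply those standard details, and they even give the stronger regularity $u\in L^2(0,T;H^2_{\frac1a}(0,1))$, with the time integral in the estimate correctly read as $\int_0^T$.
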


\subsubsection{\bf Carleman estimate}\label{secCar1}
Now, we prove a new Carleman estimate that will be useful to prove the stability of the backward problem of the linear part of the viscous Hamilton-Jacobi equation \eqref{eq1to1}. We adopt the weight function in \cite{CY23} defined by
\begin{equation}
\varphi(t)=e^{\lambda t}, \;\; t>0,
\end{equation}
where $\lambda>0$ is a sufficiently large parameter. The weight function $\varphi$ is simple and independent of $x$, which makes it effective for backward problems associated with degenerate equations.

\begin{lemma}\label{lem1}
There is a constant $\lambda_{0}>0$ so that for any $\lambda>\lambda_{0},$ there exists a constant $s_{0}(\lambda)>0$ and a positive constant $C$ such that
\begin{equation}\label{Carlm HJB}
    \begin{aligned}
           &\int_{Q} \left(\frac{1}{a(x)}\vert u_{t}\vert^2+ a(x)\vert u_{xx}\vert^2+ s\lambda \varphi \vert u_{x}\vert^2+ \frac{s^2\lambda^2\varphi^2}{a(x)}\vert u\vert^2\right)e^{2s\varphi}dxdt\\&\leqslant C  \int_{Q}s\varphi\frac{1}{a(x)}\vert Ge^{s\varphi}\vert^2 dxdt +Cs\left( s\lambda\varphi(T)\| u(\cdot,T)\|_{\frac{1}{a}}^2+\| u(\cdot,T)\|_{1,\frac{1}{a}}^2 \right)e^{2s\varphi(T)}\\& +Cs\left( s\lambda\| u(\cdot,0)\|_{\frac{1}{a}}^2+\| u(\cdot,0)\|_{1,\frac{1}{a}}^2\right)e^{2s}
    \end{aligned}
\end{equation}
for all large $s\ge s_{0}(\lambda)$, where $u \in L^2\left(0,T;H^2_{\frac{1}{a}}(0,1)\right)\bigcap H^1\left(0,T; H^1_{\frac{1}{a}}(0,1)\right)$ is the solution of \eqref{eq1to1}.
\end{lemma}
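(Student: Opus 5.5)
The weight depends only on $t$, so I would run the standard conjugation argument with $w=e^{s\varphi}u$. Since $\varphi'=\lambda\varphi$, we have $u_t=e^{-s\varphi}(w_t-s\lambda\varphi w)$. The equation then becomes
\[
Pw:=w_t-s\lambda\varphi w-a w_{xx}=e^{s\varphi}G=:g ,
\]
with $w=0$ on $\Sigma$. I divide by $\sqrt{a}$, i.e.\ work in $L^2_{\frac1a}$, where $A$ is self-adjoint (Theorem \ref{thmg1}). Split $P=P_1+P_2$ with $P_1w=w_t$ (antisymmetric part) and $P_2w=-aw_{xx}-s\lambda\varphi w$ (symmetric part). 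Then
\[
\|P_1w\|^2_{\frac1a}+\|P_2w\|^2_{\frac1a}+2(P_1w,P_2w)_{\frac1a}=\|g\|^2_{\frac1a},
\]
where all norms are integrated over $(0,T)$.

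The cross term is computed with the Green formula (Lemma above). This is legitimate because $w(\cdot,t)\in H^2_{\frac1a}$ and $w_t(\cdot,t)\in H^1_{\frac1a}$ under the stated regularity.
\begin{itemize}
\item First piece: $\int_Q w_t(-w_{xx})\,dxdt=\int_Q w_{xt}w_x\,dxdt=\tfrac12\big[\|w_x(t)\|^2_{L^2}\big]_0^T$.
\item Second piece: $-\int_Q s\lambda\varphi\frac{w w_t}{a}=-\tfrac12\big[s\lambda\varphi\|w\|^2_{\frac1a}\big]_0^T+\tfrac12\int_Q s\lambda^2\varphi\frac{w^2}{a}$.
\end{itemize}
So the interior term $\int_Q s\lambda^2\varphi w^2/a$ appears with the good sign. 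The boundary terms are controlled by $s\lambda\varphi(T)\|w(T)\|^2_{\frac1a}+\|w(T)\|^2_{1,\frac1a}$ and the same quantities at $t=0$. Since $w(T)=e^{s\varphi(T)}u(T)$ and $\varphi(0)=1$, these give exactly the data terms on the right of \eqref{Carlm HJB}, with factors $e^{2s\varphi(T)}$ and $e^{2s}$.

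This direct identity only yields weights $s\lambda^2\varphi$ on $w^2/a$, not $s^2\lambda^2\varphi^2$. This is the main obstacle. To obtain the $s^2$ powers I would instead multiply the equation $Pw=g$ by $s\varphi\,(-w_{xx})$ and by $s^2\lambda\varphi^2 w/a$, in addition to the plain $L^2_{\frac1a}$ square. This is the usual trick when the weight is $x$-independent: the factor $s\varphi$ is what produces the $s\varphi\frac1a|Ge^{s\varphi}|^2$ on the right, and the time derivative of $\varphi$ generates the positive terms.

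Concretely, I would test with $s\varphi\,w/a$.
\begin{itemize}
\item From $-aw_{xx}$ this gives $s\varphi\int w_x^2$.
\item From $-s\lambda\varphi w$ it gives $-s^2\lambda\varphi^2 w^2/a$, which has the bad sign. This must be absorbed, so instead I would test with $w_t$-type and $w$-type multipliers and combine them.
\end{itemize}
Simplest route: return to $u$, which satisfies $u_t-au_{xx}=G$, and multiply by $s\lambda\varphi\, u\, e^{2s\varphi}/a$. Integration by parts in $t$ gives
\[
-\tfrac12\int_Q(s\lambda\varphi)'e^{2s\varphi}\frac{u^2}{a}-\int_Q s^2\lambda^2\varphi^2e^{2s\varphi}\frac{u^2}{a},
\]
since $(e^{2s\varphi})'=2s\lambda\varphi e^{2s\varphi}$. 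I would take care with signs: integrating $\tfrac12\int(u^2)_t\,\rho/a$ with $\rho=s\lambda\varphi e^{2s\varphi}$ gives boundary terms minus $\tfrac12\int\rho' u^2/a$, and $\rho'\approx 2s^2\lambda^2\varphi^2e^{2s\varphi}$. The wrong sign arises for the forward direction, so I would use the weight $e^{-2s\varphi}$ mirror, or equivalently test with $-u$. Since the stated estimate keeps both endpoint terms, either sign is admissible. Whichever choice gives $+\int s^2\lambda^2\varphi^2 e^{2s\varphi}u^2/a$ on the left, the $x$-part gives $+\int s\lambda\varphi e^{2s\varphi}u_x^2$. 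The term $\int Gu\,(\cdot)$ is handled by Cauchy–Schwarz: $s\lambda\varphi|Gu|/a\le \tfrac{1}{2}s^2\lambda^2\varphi^2u^2/a+\tfrac{1}{2}G^2/a$, and this is absorbed.

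Finally I would recover $\frac1a|u_t|^2+a|u_{xx}|^2$. Square the equation in $L^2_{\frac1a}$ with weight $e^{2s\varphi}$:
\[
\int\Big(\frac{u_t^2}{a}+a u_{xx}^2-2u_tu_{xx}\Big)e^{2s\varphi}=\int\frac{G^2}{a}e^{2s\varphi}.
\]
Here $-2\int u_tu_{xx}e^{2s\varphi}=\big[\|u_x\|^2e^{2s\varphi}\big]_0^T-\int 2s\lambda\varphi u_x^2e^{2s\varphi}$, and the last term is already controlled by the previous step. Summing the two estimates, taking $\lambda$ large and then $s\ge s_0(\lambda)$ to absorb lower-order terms, yields \eqref{Carlm HJB}; the extra factor $s\varphi$ on $G$ is harmless since $s\varphi\ge1$. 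The delicate part is this sign bookkeeping and the rigorous justification of the integrations by parts near the degenerate endpoints, which relies on the Green formula in $H^2_{\frac1a}$.
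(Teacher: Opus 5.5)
There is a genuine gap, and it sits exactly at the step you call the main obstacle. Your first identity (which produces $s\lambda^2\int_Q\varphi w^2/a$ with the good sign) matches the paper. Your last step is also fine once $\int_Q s\lambda\varphi u_x^2e^{2s\varphi}$ is under control; that step squares $u_t-au_{xx}=G$ with weight $e^{2s\varphi}$ and uses $-2\int_Q u_tu_{xx}e^{2s\varphi}=[\|u_x\|^2_{L^2}e^{2s\varphi}]_0^T-2\int_Q s\lambda\varphi u_x^2e^{2s\varphi}$. The middle step fails. Testing with $\rho u/a$, $\rho=s\lambda\varphi e^{2s\varphi}$, gives
\[
\int_Q\rho\,u_x^2-\frac12\int_Q\rho'\,\frac{u^2}{a}+\frac12\Big[\rho\,\|u\|_{\frac1a}^2\Big]_{0}^{T}=\int_Q\rho\,\frac{Gu}{a},\qquad \rho'=\big(2s^2\lambda^2\varphi^2+s\lambda^2\varphi\big)e^{2s\varphi}>0,
\]
so the gradient term and the $s^2\lambda^2\varphi^2$ term \emph{always} have opposite signs. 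Testing with $-u$ flips both of them. Hence your claim that the choice making the $u^2$-term positive also makes the $u_x^2$-term positive is false. Keeping both endpoint terms on the right does not help, because the sign problem is in the interior terms. Switching to $e^{-2s\varphi}$ is not a mirror of the same estimate: it proves a different inequality, essentially the forward energy estimate with a decreasing weight. That inequality cannot be converted back to weight $e^{2s\varphi}$ with $s$-independent constants. It is also useless for Theorem~\ref{15}, whose proof rests on $e^{2s\varphi}\ge e^{2s\varphi(t_0)}$ on $(t_0,T)$ and on the small factor $e^{-2s(\varphi(t_0)-1)}$ in front of the $t=0$ data.

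What is missing is a second identity that produces $\int_Q s^2\lambda^2\varphi^2w^2/a$ with the good sign and with a larger coefficient than your multiplier identity consumes.

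\textbf{The paper's route.} First prove a basic estimate: combine your first identity with a test of $Pw=g$ against $\lambda w/a$, where the term $s\lambda^2\int_Q\varphi w^2/a$ appearing with the bad sign is bounded by your first identity. Then apply this basic estimate to $v=\varphi^{1/2}u$, which solves $v_t-av_{xx}=\varphi^{1/2}G+\frac{\lambda}{2}v$. Multiply by the constant $s$, and absorb $\frac{s\lambda^2}{2}\int_Q\varphi u^2e^{2s\varphi}/a$ into $s^2\lambda^2\int_Q\varphi^2u^2e^{2s\varphi}/a$ for $s$ large. This is where the factor $s\varphi$ on $G$ and the factor $s$ on the endpoint data come from.

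\textbf{A route closer to yours.} Weight your first identity by $s\varphi$:
\[
\int_Q s\varphi\frac{g^2}{a}=\int_Q s\varphi\frac{w_t^2+|P_2w|^2}{a}+2s^2\lambda^2\int_Q\varphi^2\frac{w^2}{a}-s\lambda\int_Q\varphi w_x^2+\Big[s\varphi\|w_x\|^2_{L^2}-s^2\lambda\varphi^2\|w\|^2_{\frac1a}\Big]_0^T .
\]
Then insert your multiplier identity, written in $w$:
\[
s\lambda\int_Q\varphi w_x^2=s^2\lambda^2\int_Q\varphi^2\frac{w^2}{a}+\frac{s\lambda^2}{2}\int_Q\varphi\frac{w^2}{a}-\frac12\Big[s\lambda\varphi\|w\|^2_{\frac1a}\Big]_0^T+\int_Q s\lambda\varphi\frac{gw}{a}.
\]
After Cauchy--Schwarz on the $gw$ term, this leaves a net $\frac12 s^2\lambda^2\int_Q\varphi^2w^2/a$ on the left for $s\ge2$. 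The multiplier identity then bounds $s\lambda\int_Q\varphi w_x^2$, and your squaring step finishes the proof.
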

\begin{proof}
Set 
$$ Lu :=u_{t} - a(x)  u_{xx}(x,t), \qquad w=e^{s\varphi}u, \qquad Pw=e^{s\varphi}L\left(e^{-s\varphi}w\right)= e^{s\varphi} G.$$
We have $$ e^{s\varphi} \left(e^{-s\varphi}w\right)_{t}=w_{t} -s\lambda \varphi w, \qquad e^{s\varphi} a(x) \left(w e^{-s\varphi}\right)_{xx}=a(x)w_{xx}.$$
Then $$ Pw= e^{s\varphi}L\left(e^{-s\varphi}w\right)= P_{-}w+P_{+}w =e^{s\varphi} G,$$
   where 
   $$P_{-}w= w_{t}, \qquad P_{+}w = -s\lambda \varphi w -a(x)w_{xx}.$$
   By taking the norm $\| \cdot\|^2_{L^2(0,T;L^2_{\frac{1}{a}}(0,1))}$ in the previous equality, we obtain
\begin{align*}
&\| e^{s\varphi}G\|^2_{L^2(0,T;L^2_{\frac{1}{a}}(0,1))}=\int_{Q} \frac{1}{a(x)}\vert w_{t}\vert^2 dx dt + 2 \int_{Q} \frac{1}{a(x)}w_{t} \left(-s\lambda \varphi w -a(x)w_{xx} \right)\\
& +\int_{Q}\frac{1}{a(x)}\left\vert - s\lambda \varphi w -a(x)w_{xx} \right\vert^2dxdt\\
& \geqslant \int_{Q}\frac{1}{a(x)} \vert w_{t}\vert^2dxdt - 2 \int_{Q}w_{t}w_{xx} dx dt+2 \int_{Q}\frac{1}{a(x)}w_{t}  (-s\lambda\varphi)wdxdt\\
&:= \int_{Q} \frac{1}{a(x)}\vert w_{t}\vert^2dxdt+ J_{1}+J_{2}.
\end{align*}
Hence 
$$ \int_{Q}\frac{1}{a(x)}\vert G\vert^2 e^{2s\varphi}dx dt\geqslant J_{1}+J_{2},$$
\begin{align}\label{2}
    \int_{Q}\frac{1}{a(x)} \vert w_{t}\vert^2dxdt \leqslant \int_{Q}\frac{1}{a(x)}\vert G\vert^2 e^{2s\varphi}dx dt -J_{1}-J_{2},
\end{align}
\begin{align*}
    \int_{Q}\frac{1}{a(x)} \vert P_{+}w\vert^2dxdt \leqslant \int_{Q}\frac{1}{a(x)}\vert G\vert^2 e^{2s\varphi}dx dt -J_{1}-J_{2}.
\end{align*}
We assume that $s>1$ and $\lambda>1.$
Using the boundary conditions and integration by parts gives
\begin{equation}\label{3}
    \begin{aligned}
    J_{1}&=\int_{Q}(w_{x}^2)_{t}dx dt\\
    &= \int_{\Omega}\left[\vert w_{x}(T,x)\vert^2- \vert w_{x}(0,x)\vert^2\right] dx\\ &\leqslant  \int_{\Omega}\left[\vert w_{x}(T,x)\vert^2+\vert w_{x}(0,x)\vert^2\right] dx.
    \end{aligned}
\end{equation}
On the other hand,
\begin{equation}\label{4}
    \begin{aligned}
    J_{2} &= -s\lambda \int_{Q} \frac{1}{a(x)}\left(w^2\right)_{t}\varphi\, dxdt\\
     &= s\lambda^2 \int_{Q}\varphi \frac{1}{a(x)} w^2 \, dx dt-s\lambda \int_{\Omega}\frac{1}{a(x)}\left(\varphi(T)\vert w(x,T)\vert^2-\vert w(x,0)\vert^2\right)dx.
\end{aligned}
\end{equation}
Then
\begin{equation}\label{5}
\begin{aligned}
    \| e^{s\varphi}G\|^2_{L^2(0,T;L^2_{\frac{1}{a}}(0,1))} &\geqslant s\lambda^2 \int_{Q}\varphi\frac{1}{a(x)} w^2 \,dxdt \\& -s\lambda \int_{\Omega} \frac{1}{a(x)}\left(\varphi(T)\vert w(x,T)\vert^2+\vert w(x,0)\vert^2\right)dx\\& \quad -\int_{\Omega}\left( \vert w_{x}(x,T)\vert^2+\vert w_{x}(x,0)\vert^2\right)\, dx.
\end{aligned}
\end{equation}
\begin{equation*}\label{6}
    \begin{aligned}
    \int_{Q}\frac{1}{a(x)}\left(Pw\right)w\, dxdt&=\int_{Q} \frac{1}{a(x)}w_{t}w\, dx dt - \int_{Q}s\lambda \varphi\frac{1}{a(x)} w^2\, dx dt\\&- \int_{Q} \frac{1}{a(x)} \left( a(x)w_{xx}\right)w \,dxdt\\
    &= \int_{Q} \frac{1}{a(x)}w_{t}w\, dxdt -\int_{Q}s\lambda \varphi\frac{1}{a(x)} w^2\, dxdt- \int_{Q} w_{xx}w \,dxdt\\
    &  =I_{1}+I_{2}+I_{3}.
\end{aligned}
\end{equation*}
\begin{align*}
   \left\vert I_{1}\right\vert &= \left\vert \frac{1}{2} \int_{Q}\frac{1}{a(x)}(w^2)_{t} \, dxdt\right\vert\\ & =\frac{1}{2}\left\vert \int_{\Omega}\frac{1}{a(x)}\left[ \vert w(x,t)\vert^2\right]_{t=0}^{t=T}\, dx\right\vert \\&\leqslant \frac{1}{2}\int_{\Omega}\frac{1}{a(x)}\left(\vert w(x,T)\vert^2+\vert w(x,0)\vert^2 \right)\, dx.
\end{align*}
Next
$$ \vert I_{2}\vert = \left\vert -\int_{Q} s\lambda \varphi \frac{1}{a(x)} w^2\, dxdt\right\vert \leq  \int_{Q} s\lambda \varphi \frac{1}{a(x)} w^2 \, dxdt,$$
and 
\begin{align*}
     I_{3}&= -\int_{Q}w_{xx}w dxdt= \int_{Q}\vert w_{x}\vert^2  dxdt.
\end{align*}
Therefore
\begin{align*}
    \int_{Q}\lambda \frac{1}{a(x)}\left(Pw \right)w dxdt &\geqslant   \int_{Q}\lambda \vert w_{x}\vert^2 dx dt- \int_{Q} s\lambda^2 \varphi \frac{1}{a(x)} w^2 dxdt\\& - \int_{\Omega}\frac{\lambda}{2a(x)}\left(\vert w(x,T)\vert^2+\vert w(x,0)\vert^2 \right)\, dx.
\end{align*}
Moreover
\begin{align*}
\left\vert \int_{Q} \frac{1}{a(x)} \lambda(Pw)wdxdt\right\vert  & \leqslant \frac{1}{2}\int_{Q}\frac{1}{a(x)}\vert Pw\vert^2 dx dx dt+\frac{\lambda^2}{2}\int_{Q}\frac{1}{a(x)}\vert w\vert^2 dx dt\\ & =\frac{1}{2} \int_{Q} \frac{1}{a(x)}\vert G \vert^2 e^{2s\varphi} dxdt +\frac{\lambda^2}{2}\int_{Q}\frac{1}{a(x)}\vert w\vert^2 dx dt.
\end{align*}
\begin{align*}
    \lambda \int_{Q} \vert w_{x}\vert^2  dx dt &\leqslant \int_{Q}s\lambda^2\varphi \frac{1}{a(x)}w^2 dxdt + \frac{1}{2} \int_{Q} \frac{1}{a(x)}\vert G \vert^2 e^{2s\varphi} dxdt \\&+\frac{\lambda^2}{2}\int_{Q}\frac{1}{a(x)}\vert w\vert^2 dx dt  + \frac{\lambda}{2}\int_{\Omega}\frac{1}{a(x)}\left(\vert w(x,T)\vert^2+\vert w(x,0)\vert^2 \right)\, dx.
\end{align*}
We use \eqref{5} to estimate the first term on the right-hand side. Then
\begin{equation}\label{7}
\begin{aligned}
 & \lambda \int_{Q} \vert w_{x}\vert^2  dx dt  \leqslant C_{3}\int_{Q}\frac{1}{a(x)}\vert Ge^{s\varphi}\vert^2 dxdt + C_{3}\int_{Q}\lambda^2 \frac{1}{a(x)} w^2 dxdt  \\ &+C_{3}s\lambda \left(\varphi(T)\| w(\cdot,T)\|_{\frac{1}{a}}^2+\| w(\cdot,0)\|_{\frac{1}{a}}^2 \right)+C_{3}\left( \| w_{x}(\cdot,T)\|_{L^2(0,1)}^2+\| w_{x}(\cdot,0)\|_{L^2(0,1)}^2\right).
\end{aligned}
\end{equation}
Adding \eqref{5} and \eqref{7}, we obtain
\begin{align*}
    &\int_{Q} s\lambda^2 \varphi \frac{1}{a(x)} w^2 dxdt+\lambda \int_{Q} \vert w_{x}\vert^2  dx dt \\& \leqslant C_{4}  \int_{Q} \frac{1}{a(x)}\vert Ge^{s\varphi}\vert^2 dxdt +C_{4}\int_{Q}\lambda^2  \frac{1}{a(x)} w^2 dxdt\\
    &\quad +C_{4}s\lambda \left(\varphi(T)\| w(\cdot,T)\|_{\frac{1}{a}}^2+\| w(\cdot,0)\|_{\frac{1}{a}}^2 \right) \\&+C_{4}\left( \| w_{x}(\cdot,T)\|_{L^2(0,1)}^2+\| w_{x}(\cdot,0)\|_{L^2(0,1)}^2\right).
\end{align*}
Given that $\varphi(t)=e^{\lambda t}\geqslant 1$, by choosing $s>0$ sufficiently large, the second term on the right-hand side can be absorbed into the left-hand side. Then,
\begin{equation}\label{8}
\begin{aligned}
    &\int_{Q} s\lambda^2 \varphi \frac{1}{a(x)}w^2 dxdt+\lambda \int_{Q}\vert w_{x}\vert^2  dx dt \leqslant C_{4}  \int_{Q}\frac{1}{a(x)}\vert Ge^{s\varphi}\vert^2 dxdt \\&+C_{4}s\lambda \left(\varphi(T)\| w(\cdot,T)\|_{\frac{1}{a}}^2+\| w(\cdot,0)\|_{\frac{1}{a}}^2 \right)\\& +C_{4}\left( \| w_{x}(\cdot,T)\|_{L^2(\Omega)}^2+\| w_{x}(\cdot,0)\|_{L^2(\Omega)}^2\right).
\end{aligned}
\end{equation}
We now focus on estimating $\vert w_{t}\vert^2$. Since we have $u=e^{-s\varphi}w,$ $u_{t}=-s\lambda \varphi e^{-s\varphi}w+e^{-s\varphi}w_{t},$ and 
$$ \frac{1}{s\varphi a(x)}\vert u_{t}\vert^2e^{2s\varphi }\leqslant 2s\lambda^2 \varphi\frac{1}{a(x)} w^2+\frac{2}{s\varphi a(x)}\vert w_{t}\vert^2. $$
Let $\epsilon \in \left(0,\frac{1}{2}\right)$ be an arbitrary parameter to be chosen. We have $\frac{1}{s\varphi}= \frac{1}{se^{\lambda t}}\leq \frac{1}{s}\leq \frac{1}{2}$ for $s \geqslant 2.$ Hence, for all $s>0$ and $\lambda> 0$ large enough, it follows by \eqref{2} that 
\begin{equation*}
    \begin{aligned}
        &\int_{Q}\frac{\epsilon}{s\varphi a(x)}\vert u_{t}\vert^2 e^{2s\varphi}\, dxdt\leqslant 2\epsilon\int_{Q} s \lambda^2 \varphi \frac{1}{a(x)}w^2+2\epsilon\int_{Q}\frac{1}{s\varphi a(x)}\vert w_{t}\vert^2dxdt\\ &\leqslant 2\epsilon \int_{Q}s\lambda^2\varphi \frac{1}{a(x)} w^2 dxdt+\epsilon \int_{Q}\frac{1}{a(x)}\vert w_{t}\vert^2dxdt\\ & \leqslant 2\epsilon \int_{Q} s\lambda^2\varphi \frac{1}{a(x)} w^2 dxdt + \epsilon \int_{Q} \frac{1}{a(x)} \vert G\vert^2 e^{2s\varphi}dxdt + \epsilon \left(-J_{1}-J_{2}\right).
    \end{aligned}
\end{equation*}
By applying \eqref{3} and \eqref{4}, we obtain
\begin{equation}\label{10}
    \begin{aligned}
        &\int_{Q}\frac{\epsilon}{s\varphi a(x)}\vert u_{t}\vert^2 e^{2s\varphi}\, dxdt\leqslant \epsilon \int_{Q} s\lambda^2\varphi\frac{1}{a(x)} w^2 dxdt + \epsilon \int_{Q} \frac{1}{a(x)}\vert G\vert^2 e^{2s\varphi}dxdt \\ & +\int_{\Omega}\epsilon\left(\vert  w_{x}(x,T)\vert^2+ \vert   w_{x}(x,0)\vert^2\right)dx+\epsilon s\lambda \int_{\Omega}\frac{1}{a(x)}\left(\varphi(T)\vert w(x,T)\vert^2+\vert w(x,0)\vert^2\right)dx.
    \end{aligned}
\end{equation}
Adding \eqref{8} to \eqref{10} yields 
\begin{equation}\label{11}
    \begin{split}
         &\int_{Q} s\lambda^2 \varphi \frac{1}{a(x)}w^2 dxdt+ \int_{Q} \lambda \vert w_{x}\vert^2 dx dt+\int_{Q}\frac{\epsilon}{s\varphi a(x)}\vert u_{t}\vert^2 e^{2s\varphi}\, dxdt\\& \leqslant C_{5}  \int_{Q}\frac{1}{a(x)}\vert Ge^{s\varphi}\vert^2 dxdt + 2\epsilon \int_{Q} s\lambda^2\varphi \frac{1}{a(x)}w^2 dxdt \\&+C_{5}s\lambda \left(\varphi(T)\| w(\cdot,T)\|_{\frac{1}{a}}^2+\| w(\cdot,0)\|_{\frac{1}{a}}^2 \right)\\&\quad +C_{5}\left( \| w_{x}(\cdot,T)\|_{L^2(0,1)}^2+\| w_{x}(\cdot,0)\|_{L^2(0,1)}^2\right).
    \end{split}
\end{equation}
Next, we will estimate $ a(x) w_{xx}^2$. We have $a(x) w_{xx}=-P_{+}w-s\lambda \varphi w.$ Then 
\begin{align*}
    \frac{1}{s\varphi}a(x)\vert w_{xx}\vert^2 & \leqslant 2 \frac{1}{s\varphi a(x)}\vert P_{+}w\vert^2 +  2\frac{s \lambda^2 \varphi}{a(x)}\vert w\vert^2 \\ &
   \leqslant \frac{1}{a(x)} \vert P_{+}w \vert^2 e^{2s\varphi}  + 2\frac{s \lambda^2 \varphi}{a(x)}\vert w\vert^2 .
    \end{align*}
Therefore, for all large $s>0$ and $\lambda> 0, $ we have
    \begin{equation}\label{12}
        \begin{split}
            \int_{Q} \epsilon\frac{1}{s\varphi}a(x)\vert w_{xx}\vert^2 dx dt &\leqslant \int_{Q}\epsilon \frac{1}{a(x)} \vert G\vert^2 e^{2s\varphi}dxdt -\epsilon(J_{1}+J_{2}) + \int_{Q}2\epsilon  \frac{s \lambda^2 \varphi}{a(x)}\vert w\vert^2 dx dt\\&
            \leq \int_{Q}\epsilon \frac{1}{a(x)} \vert G\vert^2 e^{2s\varphi}dxdt +\int_{Q}\epsilon  \frac{s \lambda^2 \varphi}{a(x)}\vert w\vert^2 dx dt\\&+C\int_{0}^1\epsilon\left(\vert  w_{x}(x,T)\vert^2+ \vert  w_{x}(x,0)\vert^2\right)dx
            \\&+\epsilon s\lambda \int_{0}^1\frac{1}{a(x)}\left(\varphi(T)\vert w(x,T)\vert^2+\vert w(x,0)\vert^2\right)dx.
        \end{split}
    \end{equation}
    Adding \eqref{11} to \eqref{12}, we obtain 

\begin{equation*}
    \begin{split}
          &\int_{Q} s\lambda^2 \varphi \frac{1}{a(x)}w^2 dxdt + \int_{Q} \epsilon\frac{1}{s\varphi}a(x)\vert w_{xx}\vert^2 dx dt\\&+\int_{Q} \lambda \vert w_{x}\vert^2 dx dt+\int_{Q}\frac{\epsilon}{s\varphi a(x)}\vert u_{t}\vert^2 e^{2s\varphi}\, dxdt\\& \leqslant C_{5}  \int_{Q}\frac{1}{a(x)}\vert Ge^{s\varphi}\vert^2 dxdt + 4\epsilon \int_{Q} s\lambda^2\varphi \frac{1}{a(x)}w^2 dxdt \\& \quad+C_{5}s\lambda \left(\varphi(T)\| w(\cdot,T)\|_{\frac{1}{a}}^2+\| w(\cdot,0)\|_{\frac{1}{a}}^2 \right)\\&\quad+C_{5}\left( \| w_{x}(\cdot,T)\|_{L^2(\Omega)}^2+\| w_{x}(\cdot,0)\|_{L^2(\Omega)}^2\right).
    \end{split}
\end{equation*}
We set $v=\varphi^{\frac{1}{2}}u$. Then $ v_{t}=\frac{1}{2}\lambda \varphi^{\frac{1}{2}}u+\varphi^{\frac{1}{2}}u_{t}$, and so
\begin{align}
    v_{t}-a(x)v_{xx}=\varphi^{\frac{1}{2}}(u_{t}-a(x)u_{xx})+ \frac{1}{2}\lambda\varphi^{\frac{1}{2}} u= \varphi^{\frac{1}{2}} G +\frac{1}{2}\lambda\varphi^{\frac{1}{2}} u.
\end{align}
Therefore, we obtain 
\begin{equation*}\label{14}
    \begin{split}
         & \int_{Q} \frac{1}{\varphi}\left(a(x)\vert v_{xx}\vert^2+\frac{1}{ a(x)}\vert v_{t}\vert^2 \right)e^{2s\varphi}dx dt\\&+\int_{Q} \lambda s\vert v_{x}\vert^2 e^{2s\varphi} dx dt+\int_{Q} s^2\lambda^2 \varphi \frac{1}{a(x)}v^2 e^{2s\varphi} dxdt\\& \leqslant C  \int_{Q}s\varphi\frac{1}{a(x)}\vert Ge^{s\varphi}\vert^2 dxdt +C\frac{1}{2}\int_{Q}\frac{s\lambda^2\varphi}{a(x)}u^2 e^{2s\varphi}dxdt  \\ &\quad +C s\left( s\lambda\varphi(T)\| v(\cdot,T)\|_{\frac{1}{a}}^2+\| v(\cdot,T)\|_{1,\frac{1}{a}}^2 \right)e^{2s\varphi(T)}\\& 
         \quad +Cs\left( s\lambda\| v(\cdot,0)\|_{\frac{1}{a}}^2+\| v(\cdot,0)\|_{1,\frac{1}{a}}^2\right)e^{2s}.
    \end{split}
\end{equation*}
On the other hand,
\begin{equation*}
    \begin{aligned}
        &\varphi\vert u_{t}\vert^2 \leq 2\vert v_{t}\vert^2 +\lambda^2\varphi\vert u\vert^2,\qquad 
        \varphi \vert u_{xx}\vert^2 = \vert v_{xx}\vert^2.
    \end{aligned}
\end{equation*}
Hence, 
\begin{align*}
&\frac{1}{a(x)}\vert u_{t}\vert^2+ a(x)\vert u_{xx}\vert^2 \leqslant \frac{1}{\varphi}\left( \frac{2}{a(x)}\vert v_{t}\vert^2 +a(x)\vert v_{xx}\vert^2\right)+ \frac{\lambda^2}{a(x)}\vert u \vert^2.
\end{align*}
Therefore,
\begin{align*}
    &\frac{1}{a(x)}\vert u_{t}\vert^2+ a(x)\vert u_{xx}\vert^2+ s\lambda \varphi \vert u_{x}\vert^2+ \frac{s^2\lambda^2\varphi^2}{a(x)}\vert u\vert^2\\&
    \leqslant \frac{1}{\varphi}\left( \frac{2}{a(x)}\vert v_{t}\vert^2 +a(x)\vert v_{xx}\vert^2\right)+ \frac{\lambda^2}{a(x)}\vert u \vert^2 + s\lambda\varphi \vert u_{x}\vert^2+ \frac{s^2\lambda^2\varphi^2}{a(x)}\vert u\vert^2 \\ &\leqslant C\frac{1}{\varphi}\left( \frac{1}{a(x)}\vert v_{t}\vert^2 +a(x)\vert v_{xx}\vert^2\right)+ s\lambda \vert v_{x}\vert^2+ \frac{s^2\lambda^2\varphi}{a(x)}\vert v\vert^2+ \frac{\lambda^2}{a(x)}\vert u \vert^2 . 
\end{align*}
Applying \eqref{1}, we obtain
\begin{equation*}
    \begin{split}
        &\int_{Q} \left(\frac{1}{a(x)}\vert u_{t}\vert^2+ a(x)\vert u_{xx}\vert^2+ s\lambda \varphi \vert u_{x}\vert^2+ \frac{s^2\lambda^2\varphi^2}{a(x)}\vert u\vert^2\right)e^{2s\varphi}dxdt\\&
        \leqslant C\int_{Q}\frac{1}{\varphi}\left( \frac{1}{a(x)}\vert v_{t}\vert^2 +a(x)\vert v_{xx}\vert^2\right)e^{2s\varphi}dxdt+\int_{Q}s\lambda \vert v_{x}\vert^2e^{2s\varphi}\\ &\quad +\int_{Q} \frac{s^2\lambda^2\varphi}{a(x)}\vert v\vert^2 e^{2s\varphi} dxdt+ \int_{Q} \frac{\lambda^2}{a(x)}\vert u \vert^2 e^{2s\varphi} dxdt\\&\leqslant C  \int_{Q}s\varphi\frac{1}{a(x)}\vert Ge^{s\varphi}\vert^2 dxdt +C\frac{1}{2}\int_{Q}\frac{s\lambda^2\varphi}{a(x)}u^2 e^{2s\varphi}dxdt \\&\quad +C s\left( s\lambda\varphi(T)\| v(\cdot,T)\|_{\frac{1}{a}}^2+\| v(\cdot,T)\|_{1,\frac{1}{a}}^2 \right)e^{2s\varphi(T)}\\& \quad +Cs\left( s\lambda\| v(\cdot,0)\|_{\frac{1}{a}}^2+\| v(\cdot,0)\|_{1,\frac{1}{a}}^2\right)e^{2s}+\int_{Q} \frac{\lambda^2}{a(x)}\vert u \vert^2 e^{2s\varphi} dxdt.
    \end{split}
\end{equation*}
For sufficiently large values of  $s>0$, we can absorb the second term and the last term to the right-hand side, and we obtain
\begin{align*}
    &\int_{Q} \left(\frac{1}{a(x)}\vert u_{t}\vert^2+ a(x)\vert u_{xx}\vert^2+ s\lambda \varphi \vert u_{x}\vert^2+ \frac{s^2\lambda^2\varphi^2}{a(x)}\vert u\vert^2\right)e^{2s\varphi}dxdt\\&\leqslant C  \int_{Q}s\varphi\frac{1}{a(x)}\vert Ge^{s\varphi}\vert^2 dxdt +C s\left( s\lambda\varphi(T)\| v(\cdot,T)\|_{\frac{1}{a}}^2+\| v(\cdot,T)\|_{1,\frac{1}{a}}^2 \right)e^{2s\varphi(T)}\\&\quad +Cs\left( s\lambda\| v(\cdot,0)\|_{\frac{1}{a}}^2+\| v(\cdot,0)\|_{1,\frac{1}{a}}^2\right)e^{2s}.
\end{align*}
\end{proof}

\subsubsection{\textbf{Stability of the backward problem}}
Now, we prove H\"older stability for past states $u(\cdot,t_0)$ when $0<t_{0}<T$. Note that this idea was first used in \cite{CY23} for degenerate equations in divergence form.
\begin{theorem}\label{15}
Let $u \in L^2\left(0,T;H^2_{\frac{1}{a}}(0,1)\right)\bigcap H^1\left(0,T; H^1_{\frac{1}{a}}(0,1)\right)$ satisfy \eqref{eq1to1} and assume that 
$$\| u(\cdot,0)\|_{1,\frac{1}{a}}\leq M_1$$
for an arbitrarily fixed constant \( M_1 > 0 \). There are positive constants $C>0$ and $\theta \in (0,1)$ depending on $t_{0}$ and $M_1$ satisfying
\begin{align}
    \| u(\cdot,t_{0})\|_{\frac{1}{a}}\leqslant C\left( \| u(\cdot,T)\|^{\theta}_{1,\frac{1}{a}}+\| u(\cdot,T)\|_{1,\frac{1}{a}}\right).
\end{align}
  
\end{theorem}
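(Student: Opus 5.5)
The plan is to apply the Carleman estimate of Lemma \ref{lem1} with $G\equiv 0$ (the statement involves no source term) and a fixed $\lambda>\lambda_0$, and then to trade the large weight at $t=t_0$ against the weights at $t=0$ and $t=T$. The key point is that $\varphi(t)=e^{\lambda t}$ is increasing, so $1=\varphi(0)<\varphi(t_0)<\varphi(T)$ whenever $0<t_0<T$.

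\textbf{Step 1: pointwise-in-time bound at $t_0$.} Since the Carleman estimate only controls space-time integrals, I would first pass to a bound at the single time $t_0$ via the fundamental theorem of calculus in $t$:
\[
\int_\Omega \frac{u^2(x,t_0)}{a}e^{2s\varphi(t_0)}dx=\int_\Omega \frac{u^2(x,0)}{a}e^{2s}dx+\int_0^{t_0}\!\!\int_\Omega \frac{1}{a}\bigl(2uu_t+2s\lambda\varphi u^2\bigr)e^{2s\varphi}dxdt .
\]
By Young's inequality, $|2uu_t|\le |u_t|^2+u^2$. Since $s\lambda\varphi\ge 1$, the integrand is bounded by a constant times $\bigl(\frac{1}{a}|u_t|^2+\frac{s^2\lambda^2\varphi^2}{a}u^2\bigr)e^{2s\varphi}$. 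This is exactly the left-hand side of \eqref{Carlm HJB}, restricted to $(0,t_0)$. Hence
\[
\|u(\cdot,t_0)\|_{\frac1a}^2e^{2s\varphi(t_0)}\le C s^2 e^{2s\varphi(T)}\|u(\cdot,T)\|_{1,\frac1a}^2+C s^2 e^{2s}M_1^2 ,
\]
where I used $\|\cdot\|_{\frac1a}\le\|\cdot\|_{1,\frac1a}$ and the a priori bound on $u(\cdot,0)$, with $\lambda$ now fixed.

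\textbf{Step 2: dividing out the weight.} Set $D=\|u(\cdot,T)\|_{1,\frac1a}$, $d_1=\varphi(T)-\varphi(t_0)>0$ and $d_2=\varphi(t_0)-1>0$. Dividing by $e^{2s\varphi(t_0)}$ gives
\[
\|u(\cdot,t_0)\|_{\frac1a}^2\le Cs^2e^{2sd_1}D^2+Cs^2e^{-2sd_2}M_1^2 .
\]
The polynomial factor $s^2$ is absorbed by slightly enlarging or shrinking the exponents: $s^2e^{2sd_1}\le C e^{4sd_1}$ and $s^2e^{-2sd_2}\le Ce^{-sd_2}$. Thus, for all $s\ge s_0$,
\[
\|u(\cdot,t_0)\|_{\frac1a}^2\le C\bigl(e^{4sd_1}D^2+e^{-sd_2}M_1^2\bigr).
\]

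\textbf{Step 3: optimizing in $s$.} We may assume $D>0$, since otherwise letting $s\to\infty$ gives $u(\cdot,t_0)=0$. I would choose $s$ to balance the two terms, $e^{s(4d_1+d_2)}=M_1^2/D^2$, which yields a bound $CM_1^{2(1-\theta)}D^{2\theta}$ with $\theta=\frac{d_2}{4d_1+d_2}\in(0,1)$.

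If this $s$ is at least $s_0$, we are done. If instead it is smaller than $s_0$, then $D\ge M_1e^{-s_0(4d_1+d_2)/2}$. In that case I take $s=s_0$ and bound $M_1\le C D$, which gives $\|u(\cdot,t_0)\|_{\frac1a}\le CD$. Combining the two cases gives the stated estimate, with constants depending on $t_0$ and $M_1$.

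\textbf{Main obstacle.} The delicate part is Step 1. One has to make sure the time derivative term $\frac1a|u_t|^2e^{2s\varphi}$ in \eqref{Carlm HJB} is available with a weight no worse than the one needed; it is. One must also justify the time integration of $u^2e^{2s\varphi}/a$ in the weighted space, which follows from the assumed regularity $u\in H^1(0,T;H^1_{\frac1a})$. The optimization in $s$ is routine.
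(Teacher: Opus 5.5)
Your proof is correct, and it follows the paper's overall scheme: apply the Carleman estimate with $G\equiv 0$ and fixed $\lambda$, reach a two-term bound of the form $Cs^2e^{2s(\varphi(T)-\varphi(t_0))}D^2+Cs^2e^{-2s(\varphi(t_0)-1)}M_1^2$, absorb the $s^2$ factors, and balance in $s$. You are right to read the statement with $G\equiv 0$. The paper's proof silently drops the source term, and the estimate cannot hold for a general $G$: take $u=\chi(t)\phi(x)$ with $\chi(0)=\chi(T)=0$ and $\chi(t_0)=1$.

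The one genuine difference is how you pass from the space-time integral to the slice $t=t_0$.

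\begin{itemize}
\item The paper restricts the Carleman left-hand side to $(t_0,T)$. There it uses $e^{2s\varphi(t)}\ge e^{2s\varphi(t_0)}$ to bound $\|u_t\|_{L^2(t_0,T;L^2_{\frac1a}(0,1))}$. It then writes $u(\cdot,t_0)=u(\cdot,T)-\int_{t_0}^T u_t\,dt$, which adds a harmless term $\|u(\cdot,T)\|_{\frac1a}^2$. Only the $u_t$ part of the Carleman left-hand side is used.
\item You integrate $\frac1a u^2e^{2s\varphi}$ exactly over $(0,t_0)$. This uses both the $u_t$ term and the $s^2\lambda^2\varphi^2u^2$ term, via $s\lambda\varphi\ge 1$. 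It needs no monotonicity of the weight, because the identity carries $e^{2s\varphi(t_0)}$ on the left automatically. The price is an extra $e^{2s}\|u(\cdot,0)\|_{\frac1a}^2$ term, which is absorbed into the $M_1$ term anyway.
\end{itemize}

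You also handle the case where the balancing value of $s$ falls below $s_0$ explicitly. The paper instead extends the inequality to all $s>0$ by enlarging the constant to $C_7e^{6s_0\varphi(T)}$ and then takes $s=0$. The resulting H\"older exponents differ: yours is $\theta=\frac{d_2}{4d_1+d_2}$, the paper's is $\frac{\varphi(t_0)-1}{3\varphi(T)+\varphi(t_0)-1}$. Both lie in $(0,1)$ and depend only on $t_0$ once $\lambda$ is fixed.
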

\begin{proof}
Using the Carleman estimate \eqref{Carlm HJB} and the fact that $\varphi(t_{0})\leq \varphi(t)$, for $t_{0}\leq t\leq T$, yields
    \begin{align*}
        &e^{2s\varphi(t_{0})}\int_{(0,1)\times (t_{0},T)} \left(\frac{1}{a(x)}\vert u_{t}\vert ^2 +s^2\lambda^2 \varphi^2 \frac{1}{a(x)}u^2\right)dx dt \\&\leqslant Cs^2\lambda \varphi(T)\| u(\cdot,T)\|_{1,\frac{1}{a}}^2 e^{2s\varphi(T)}+Cs^2\lambda M_1^2 e^{2s}.
    \end{align*}
For $\lambda>0$ sufficiently large, we have
 \begin{align*}
        &\int_{(0,1)\times (t_{0},T)} \left(\frac{1}{a(x)}\vert u_{t}\vert ^2 +s^2\lambda^2 \varphi^2 \frac{1}{a(x)}u^2\right)dx dt \\&\leqslant Cs^2\| u(\cdot,T)\|_{1,\frac{1}{a}}^2e^{2s(\varphi(T)-\varphi(t_{0}))} +Cs^2 M_1^2 e^{-2s\alpha(t_{0})},
    \end{align*}
    with $\alpha(t_{0}):=\varphi(t_{0})-1$ and the constant $C$ depends on $T$ and $\lambda$.  Hence,  
    \begin{align}\label{16}
        \| u_{t}\|_{L^{2}(t_{0},T;L^2_{\frac{1}{a}}(0,1))}^2 \leqslant Cs^2D_{0}^2e^{2s(\varphi(T)-\varphi(t_{0})}+Cs^2 M_1^2 e^{-2s\alpha(t_{0})},
    \end{align}
    where $D_{0}:=\| u(\cdot,T)\|_{1,\frac{1}{a}}.$
    Since we have 
    $$ u(x,t_{0})=\int_{T}^{t_{0}}u_{t}(x,t)dt +u(x,T)\qquad  \forall x \in (0,1),$$
there exists a constant $C_{6}>0$ such that for all $t_{0}\in (0,T)$ we have
\begin{align}
    \| u(\cdot,t_{0})\|_{\frac{1}{a}}^2\leqslant C_{6} \| u_{t}\|_{L^{2}(t_{0},T; L^2_{\frac{1}{a}}(0,1))}^2+ C_{6} \| u(\cdot,T)\|^2_{\frac{1}{a}}.
\end{align}
Using \eqref{16} to estimate the first term on the right-hand side, along with the fact that $\varphi(T)>1,$ we obtain
\begin{equation*}
    \begin{aligned}
        \| u(\cdot,t_{0})\|_{\frac{1}{a}}^2&\leqslant C_{6}\left( Cs^2D_{0}^2e^{2s(\varphi(T)-\varphi(t_{0})}+Cs^2 M_1^2 e^{-2s\alpha(t_{0})}\right)+  C_{6} \| u(\cdot,T)\|^2_{\frac{1}{a}}  \\&\leqslant 
C_{7}s^2D_{0}^2e^{2s\varphi(T)}+C_{7}s^2M_1^2e^{-2s\alpha(t_{0})} \\ & \leqslant C_{7}D_{0}^2e^{3s\varphi(T)} +C_{7}M_1^2e^{-s\alpha(t_{0})}
        \end{aligned}
\end{equation*}
for all $s>s_{0}$ and all $t_{0}\in (0,T)$, and this yields 
\begin{align}\label{18}
   \| u(\cdot,t_{0})\|_{\frac{1}{a}}^2 \leqslant C_{8}D_{0}^2e^{3s\varphi(T)} +C_{8}M_1^2e^{-s\alpha(t_{0})} \;\;\; \text{for all } \;\; s>0,
\end{align}
with $C_{8}=C_{7}e^{6s_{0}\varphi(T)}.$
To obtain the desired result, we study two possible cases.
\begin{itemize}
    \item If $M_1\leqslant D_{0}$,  taking $s=0$, the inequality \eqref{18} yields 
    $$ \| u(\cdot,t_{0})\|_{\frac{1}{a}}^2 \leq 2 C_{8}D_{0}^2.$$
    \item  If $M_1>D_{0}$, we choose $s>0$ such that 
    $$ D_{0}^2e^{3s\varphi(T)}=M_1^2e^{-s\alpha(t_{0})},
    $$
which yields
$$ s= \frac{2}{3\varphi(T)+\alpha(t_{0})}\log\frac{M_1}{D_{0}}>0.
$$
Then, 
$$ \| u(\cdot,t_{0})\|_{\frac{1}{a}}^2 \leqslant 2 C_{8}M_1^{\frac{6\varphi(T)}{3\varphi(T)+\alpha(t_{0})}}D_{0}^{\frac{2\alpha(t_{0})}{3\varphi(T)+\alpha(t_{0})}}.$$
Thus, taking $\theta:=\frac{\alpha(t_{0})}{3\varphi(T)+\alpha(t_{0})}\in (0,1)$ concludes the proof.
\end{itemize}
\end{proof}

Next, we prove logarithmic stability for initial states, i.e., when $t_{0}=0$.
\begin{theorem}\label{thmstab}
Let $u,u_t,u_{tt} \in L^2\left(0,T;H^2_{\frac{1}{a}}(0,1)\right)\bigcap H^1\left(0,T; H^1_{\frac{1}{a}}(0,1)\right)$ satisfy \eqref{eq1to1} such that
   $$\| u_{tt}(\cdot,0)\|_{1,\frac{1}{a}}\leqslant M_{1}$$
for an arbitrarily fixed constant \( M_1 > 0 \) and any \( 0 < \alpha < 1 \), there is a constant \( C > 0 \) such that
   \begin{align*}
       \| u(\cdot,0)\| _{\frac{1}{a}}^2\leqslant C \left(\log\frac{1}{D}\right)^{-\alpha},
   \end{align*}
where $D:=\displaystyle \sum_{k=0}^2\| \partial_{t}^k u(\cdot,T)\|_{1,\frac{1}{a}}$ is sufficiently small.
\end{theorem}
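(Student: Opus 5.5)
The plan is to combine the Hölder estimate of Theorem \ref{15}, applied to the time derivatives of $u$, with a simple interpolation in time near $t=0$. Since $a$ does not depend on $t$, the functions $u_t$ and $u_{tt}$ solve \eqref{eq1to1} with sources $G_t$ and $G_{tt}$. The main case is $G\equiv 0$, which is implicit in the data $D$ since no source norm appears. I will assume $G\equiv 0$, and otherwise carry $G$ terms along through the Carleman estimate. By hypothesis, $u_t$ and $u_{tt}$ have the regularity required by Lemma \ref{lem1}.

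The first step is a bound on $u_t$ at an intermediate time. Fix a small $\varepsilon\in(0,T)$ and write
$$u(\cdot,0)=u(\cdot,\varepsilon)-\int_0^{\varepsilon}u_t(\cdot,t)\,dt .$$
To make the integral small in $\varepsilon$, I need $\sup_{[0,\varepsilon]}\|u_t\|_{\frac1a}$ to be bounded by a constant depending only on the a priori data. This is exactly why $u_{tt}(\cdot,0)$ is assumed bounded. Apply Theorem \ref{15} to $\partial_t u$ and to $\partial_t^2 u$, with the hypothesis $\|\cdot(\cdot,0)\|_{1,\frac1a}\le M_1$ supplied for $u_{tt}$. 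For $u_t$, the needed initial bound either follows from the same assumptions or is absorbed into the constant. For $t\ge\varepsilon$ this gives
$$\|u_{tt}(\cdot,t)\|_{\frac1a}+\|u_t(\cdot,t)\|_{\frac1a}\le C(\varepsilon)\bigl(D^{\theta(\varepsilon)}+D\bigr).$$
By the energy bound of Theorem \ref{thmgenni} applied to $u_{tt}$, the quantities $\sup_t\|u_{tt}\|_{\frac1a}$ and then, through $u_t(t)=u_t(0)+\int_0^t u_{tt}$, the quantity $\sup_t\|u_t\|_{\frac1a}$ are controlled by $M_1$ and the given data. Hence $\bigl\|\int_0^\varepsilon u_t\bigr\|_{\frac1a}\le CM_1\varepsilon$.

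The second step estimates $\|u(\cdot,\varepsilon)\|_{\frac1a}$ by Theorem \ref{15} with $t_0=\varepsilon$. The constants there behave like $\theta(\varepsilon)=\frac{\alpha(\varepsilon)}{3\varphi(T)+\alpha(\varepsilon)}\approx \frac{\lambda\varepsilon}{3e^{\lambda T}}$ and $C(\varepsilon)$, so
$$\|u(\cdot,0)\|_{\frac1a}\le C\varepsilon+C\,D^{c\varepsilon}.$$
Keeping the dependence of $C$ on $\varepsilon$ explicit is essential here. In the proof of Theorem \ref{15}, the constant $C_8$ depends only on $\lambda,T,s_0$, not on $t_0$, so the uniformity I need seems available. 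I expect this bookkeeping to be the main obstacle.

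The third step optimizes in $\varepsilon$. Choose $\varepsilon=\bigl(\log\frac1D\bigr)^{-\beta}$ with $\beta\in(0,1)$. Then
$$D^{c\varepsilon}=\exp\bigl(-c(\log\tfrac1D)^{1-\beta}\bigr),$$
which is smaller than any power of $(\log\frac1D)^{-1}$ once $D$ is small. This gives
$$\|u(\cdot,0)\|_{\frac1a}^2\le C\varepsilon^2+CD^{2c\varepsilon}\le C\bigl(\log\tfrac1D\bigr)^{-2\beta}.$$
Taking $2\beta\ge\alpha$ with $\beta<1$ covers every $\alpha\in(0,1)$. The restriction to $\alpha<1$, rather than allowing the endpoint, is the price of the exponential factor. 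The smallness of $D$ is used so that $\varepsilon<T$ and the exponential term is dominated.
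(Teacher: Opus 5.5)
\textbf{Verdict.} Your route is genuinely different from the paper's and can be made to work, but as written it has a gap where you invoke Theorem~\ref{15} for $u$ at $t_0=\varepsilon$.

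\textbf{The gap.} Theorem~\ref{15} requires an a priori bound $\|u(\cdot,0)\|_{1,\frac1a}\le M$. The only a priori hypothesis here is $\|u_{tt}(\cdot,0)\|_{1,\frac1a}\le M_1$, and you never verify the bound on $u(\cdot,0)$. Your related justification in Step 1 also fails. The identity $u_t(t)=u_t(0)+\int_0^t u_{tt}$ involves $u_t(\cdot,0)$, which nothing controls. ``Absorbing it into the constant'' is not allowed, since $C$ may depend only on $M_1,T,\lambda$, not on the solution.

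\textbf{The fix.} The missing idea is to integrate \emph{backward from $T$}, where $D$ controls $u$ and $u_t$ in $H^1_{\frac1a}$. Theorem~\ref{thmgenni} applied to $u_{tt}$ (with $G\equiv 0$, as you assume) gives $\sup_t\|u_{tt}(\cdot,t)\|^2_{\frac1a}+\int_0^T\|u_{tt}(\cdot,t)\|^2_{1,\frac1a}\,dt\le CM_1^2$. Hence, for every $t$,
$\|u_t(\cdot,t)\|_{1,\frac1a}\le \|u_t(\cdot,T)\|_{1,\frac1a}+\int_t^T\|u_{tt}\|_{1,\frac1a}\le D+CM_1$.
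It follows that $\|u(\cdot,0)\|_{1,\frac1a}\le D+T(D+CM_1)$, which is a fixed bound once $D\le 1$. This supplies both $\sup_{[0,\varepsilon]}\|u_t\|_{\frac1a}\le D+CM_1$ and the hypothesis of Theorem~\ref{15}. Applying Theorem~\ref{15} to $u_t$ and $u_{tt}$ then becomes unnecessary.

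\textbf{The constant $C_8$ is not uniform in $t_0$.} In the proof of Theorem~\ref{15}, passing from $C_7 s^2M_1^2e^{-2s\alpha(t_0)}$ to $C_7M_1^2e^{-s\alpha(t_0)}$ uses $\sup_s s^2e^{-s\alpha(t_0)}=4/(e\,\alpha(t_0))^2$. So $C_8$ grows like $t_0^{-2}$. This is harmless. With $\varepsilon=(\log\frac1D)^{-\beta}$, the extra factor $\varepsilon^{-1}$ in front of $D^{c\varepsilon}=\exp\bigl(-c(\log\frac1D)^{1-\beta}\bigr)$ is only a power of $\log\frac1D$. Alternatively, keep the unsimplified bound $\|u(\cdot,\varepsilon)\|^2_{\frac1a}\le Cs^2\bigl(D^2e^{2s\varphi(T)}+M^2e^{-2s\alpha(\varepsilon)}\bigr)$. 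Its constant is uniform in $\varepsilon$, and you can take $s=\log(1/D)/(2\varphi(T))$.

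\textbf{Outcome.} With these repairs you get $\|u(\cdot,0)\|^2_{\frac1a}\le C(\log\frac1D)^{-2\beta}$ for every $\beta<1$. This covers every $\alpha\in(0,1)$ in the statement.

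\textbf{Comparison with the paper.} The paper never uses Theorem~\ref{15} here.
\begin{enumerate}
\item It applies the Carleman estimate directly to $z=u_{tt}$, and integrates back from $T$.
\item This gives $\|u_t(\cdot,t_0)\|^2_{\frac1a}\le C_1D^2e^{2s\varphi(T)}+C_1M_1^2e^{-2s\alpha(t_0)}$ with $C_1$ genuinely uniform in $t_0$, because the factor $s^2$ cancels on both sides.
\item It then averages over $t_0\in(0,T)$ using $\int_0^Te^{-2s\alpha(t_0)}dt_0\le\frac{1}{2s\lambda}$. This yields $\|u(\cdot,0)\|^2_{\frac1a}\le CD^2e^{Cs}+CM_1^2/s$, and finally $s=(\log\frac1D)^{\alpha}$.
\end{enumerate}
That averaging replaces your split at $\varepsilon$ and avoids both the a priori bound on $u(\cdot,0)$ and the $t_0$-bookkeeping. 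Your split pays for that bookkeeping. In return, it uses a sup bound on $[0,\varepsilon]$ instead of Cauchy--Schwarz over $(0,T)$, and so gives a better exponent.
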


\begin{proof}
The proof is based on the proof of the previous theorem (Case $0<t_{0}<T$) and 
$$u(x,0)=\int_{T}^0u_{t}(x,t)dt +u(x,T), \;\; x \in (0,1). $$
    Setting $z=u_{tt}$, we have
\begin{equation}
    \left\{
\begin{aligned}
& z_{t}(x,t) - a(x) z_{xx}(x,t)= G_{tt}(x,t) &&\quad\text{ in } Q,\\
& z(x,t)=0 &&\quad \text{ on } \Sigma,\\
& z(x,0)=z_0(x)  &&\quad \text{ in } \Omega.
\label{eq1to2}
\end{aligned}
\right.
\end{equation}
Using the Carleman estimate \eqref{Carlm HJB} and the fact that $\varphi(t_{0})\leq \varphi(t)$ for $t_{0}\leq t\leq T$, we obtain 
    \begin{align*}
        &e^{2s\varphi(t_{0})}\int_{(0,1)\times (t_{0},T)} \left(\frac{1}{a(x)}\vert z_{t}\vert ^2 +s^2\lambda^2 \varphi^2 \frac{1}{a(x)}z^2\right)dx dt \\&\leqslant Cs^2\lambda \varphi(T)\| z(\cdot,T)\|_{1,\frac{1}{a}}^2 e^{2s\varphi(T)}+Cs^2 \lambda \| z(\cdot,0)\|_{1,\frac{1}{a}}^2 e^{2s},
    \end{align*}
which implies for $ \lambda >0$ sufficiently large and $s\geqslant s_{0}$
\begin{align*}
     \int_{(0,1)\times (t_{0},T)} \left(\frac{1}{a(x)}\vert u_{ttt}\vert ^2 +s^2\lambda^2 \varphi^2 \frac{1}{a(x)}\vert u_{tt} \vert^2\right)dx dt \leqslant & Cs^2\| u_{tt}(\cdot,T)\|_{1,\frac{1}{a}}^2 e^{2s(\varphi(T)-\varphi(t_{0})}\\ &+Cs^2 \| u_{tt}(\cdot,0)\|_{1,\frac{1}{a}}^2 e^{-2s\alpha(t_{0})},
    \end{align*}
    with $\alpha(t_{0}):=\varphi(t_{0})-1$ and the constant $C$ depends on $T$ and $\lambda$. Hence, 
    \begin{align}\label{19}
        \| u_{tt}\|_{L^2(t_{0},T,L^2_{\frac{1}{a}}(0,1))}\leqslant C \| u_{tt}(\cdot,T)\|_{1,\frac{1}{a}}^2e^{2s\varphi(T)}+C \| u_{tt}(\cdot,0)\|_{1,\frac{1}{a}}^2 e^{-2s\alpha(t_{0})}
    \end{align}
    for all $0<t_{0}<T.$ Since
    $$ u_{t}(x,t_{0})=\int_{T}^{t_{0}}u_{tt}(x,t)dt+u_{t}(x,T), \;\;\; x \in (0,1),$$
    we have 
    \begin{align*}
        \| u_{t}(\cdot,t_{0})\|_{\frac{1}{a}}^2\leqslant C \| u_{tt}\|_{L^2(t_{0},T,L^2_{\frac{1}{a}}(0,1))}^2+C\| u_{t}(\cdot,T)\|_{\frac{1}{a}}^2.
    \end{align*}
Using \eqref{19} to estimate the first term on the right-hand side, one obtains
    \begin{equation}\label{20}
        \begin{aligned}
             \| u_{t}(\cdot,t_{0})\|_{\frac{1}{a}}^2\leqslant & C \| u_{tt}(\cdot,T)\|_{1,\frac{1}{a}}^2e^{2s\varphi(T)}+C\| u_{tt}(\cdot,0)\|_{1,\frac{1}{a}}^2 e^{-2s\alpha(t_{0})}+C\| u_{t}(\cdot,T)\|_{\frac{1}{a}}^2\\& \leqslant C_{1} D^2e^{2s\varphi(T)}+C_{1}M_{1}^2e^{-2s\alpha(t_{0})}
        \end{aligned}
    \end{equation}
    for all $s \geqslant s_{0}$ and $0<t_{0}<T.$  Here, we used $\| u_{t}(\cdot,T)\|_{\frac{1}{a}}^2\leqslant D^2 \leqslant CD^2e^{2s\varphi(T)}$ and $\| u_{tt}(\cdot,T)\|_{1,\frac{1}{a}}^2 \leqslant D^2$.
    Since 
    $$ u(x,0)=\int_{T}^0u_{t}(x,t_{0})dt_{0}+u(x,T), \;\; x \in (0,1),$$
    we obtain 
    \begin{align*}
        \int_{0}^1 \frac{1}{a(x)}\vert u(x,0)\vert^2dx\leqslant &  2 \int_{0}^1 \frac{1}{a(x)}\left\vert \int_{T}^0u_{t}(x,t_{0})dt_{0}\right\vert^2 dx +2 \int_{0}^1 \frac{1}{a(x)}\vert u(x,T)\vert^2dx \\ &\leqslant C \int_{0}^{T} \| u_{t}(\cdot,t_{0})\|_{\frac{1}{a}}^2dt_{0}+ C \| u(\cdot,T)\|_{\frac{1}{a}}^2.
    \end{align*}
Using \eqref{20} to estimate the first term on the right-hand side, we obtain
    $$ \| u(\cdot,0)\|_{\frac{1}{a}}^2\leqslant C_{1}D^2\int_{0}^T e^{2s\varphi(T)}dt_{0}+C_{1}M_{1}^2 \int_{0}^T e^{-2s\alpha(t_{0})}dt_{0}+C\| u(\cdot,T)\|_{\frac{1}{a}}^2.$$
    Since $\| u(\cdot,T)\|_{\frac{1}{a}}^2\leqslant D^2,$ then 
    \begin{align}\label{21}
        \| u(\cdot,0)\|_{\frac{1}{a}}^2\leqslant C_{1}D^2e^{C_{2}s}+ C_{1}M_{1}^2 \int_{0}^T e^{-2s\alpha(t_{0})}dt_{0}.
    \end{align}
Using the change of variable $\tau = \alpha(t_{0})= e^{\lambda t_{0}}-1$, we have
    $$ \int_{0}^T e^{-2s\alpha(t_{0})}dt_{0}=\frac{1}{\lambda} \int_{0}^Te^{-2s\tau}\frac{1}{1+\tau}d\tau \leqslant \frac{1}{\lambda}\left[\frac{e^{-2s\tau}}{2s}\right]_{\tau =e^{\lambda T}-1}^{\tau=0}\leqslant \frac{1}{2s\lambda}.$$
    Hence, by \eqref{21}, we obtain
    \begin{align}\label{22}
        \| u(\cdot,0)\|_{\frac{1}{a}}^2\leqslant C_{3}D^2e^{C_{2}s}+ \frac{C_{3}}{s}M_{1}^2
    \end{align}
for $ s\geqslant s_{0}.$
Setting $C'=C_{3}e^{C_{2}s_{0}}$,  we obtain \eqref{22} for all $s>0$ by replacing $s:=s+s_{0}$. 
Supposing $D<1$ and  setting  
 $s=\left(\log\frac{1}{D}\right)^{\alpha}>0$,
with $0<\alpha<1$, we have
\begin{align*}
    e^{C_{2}s}D^2=\exp\left(-2\left(\log\frac{1}{D}\right)+ C_{2}\left(\log\frac{1}{D}\right)^{\alpha}\right).
\end{align*}
Moreover, there exists a constant $C_{4}>0$ such that 
$$ e^{-2\xi+C_{2}\xi^{\alpha}}\leqslant \frac{C_{4}}{\xi^{\alpha}}\;\;\; \text{for all}\;\; \xi >0.
$$
 Then,  we have 
 $$e^{C_{2}s}D^2 \leqslant C_{4}\left(\log\frac{1}{D}\right)^{-\alpha},
 $$
 and 
 $$C_{3}D^2e^{C_{2}s}+ \frac{C_{3}}{s}M_{1}^2 \leqslant C'C_{4}\left(\log\frac{1}{D}\right)^{-\alpha}+C_{3}M_{1}^2\left(\log\frac{1}{D}\right)^{-\alpha}.$$
  Thus,  we have the result.
\end{proof}

\section{Degenerate viscous Hamilton–Jacobi equation}\label{sec3}
Now, we arrive at the main result for a degenerate viscous Hamilton–Jacobi equation with a general Hamiltonian and lower-order terms. Consider the following nonlinear degenerate equation 
\begin{empheq}[left = \empheqlbrace]{alignat=2}
\begin{aligned}
& u_{t}(x,t) - a(x) u_{xx}(x,t)+ \dfrac{b(x,t)}{q}\vert u_{x}(x,t)\vert^q + d(x,t)u_{x}(x,t) +c(x,t) u(x,t)= G(x,t),\;\;\; \text{ in } Q,\\
&u(x,t)=0,\hspace{6.5cm} (x,t)\in \Sigma,
\label{23}
\end{aligned}
\end{empheq}
where $b, c, d\in L^{\infty}(Q)$, $q \geqslant 1$ and the degenerate coefficient $a(x)$ satisfies \textbf{Assumption I}.

\begin{remark}
It should be emphasized that we consider conditional stability for \eqref{23} with a general Hamiltonian in terms of $q\geqslant 1$, whereas similar works only consider a quadratic Hamiltonian with $q=2$; see e.g., \cite{Kl0} for a mean field game system.
\end{remark}

Next, we state a result of H\"older stability when $0<t_{0}<T$.
\begin{theorem}\label{stabHJ} Let $u,v  \in L^2\left(0,T;H^2_{\frac{1}{a}}(0,1)\right)\bigcap H^1\left(0,T; H^1_{\frac{1}{a}}(0,1)\right)$ satisfy \eqref{23} and 
    $$\| u(\cdot,0)\|_{1,\frac{1}{a}}\leqslant M_1, \;\;\;\| v(\cdot,0)\|_{1,\frac{1}{a}}\leqslant M_1$$
for an arbitrarily fixed constant \( M_1 > 0 \). We assume one of the following assumptions:

{\bf Case 1:}  there exist constants $C, K>0$ such that
$$\vert b(x,t)\vert +\vert d(x,t)\vert\leqslant C \sqrt{a(x)}, \quad \| u_{x}\|_{L^{\infty}(Q)}\leqslant K, \;\;\; \| v_{x}\|_{L^{\infty}(Q)}\leqslant K;$$

{\bf Case 2:} $b \in L^{\infty}(Q)$  and there exists a constant $K>0$ such that
$$\left\Vert \dfrac{\vert u_{x}\vert^{q-1}}{\sqrt{a}}\right\Vert_{L^{\infty}(Q)}\leqslant K, \qquad \left\Vert \dfrac{\vert v_{x}\vert^{q-1}}{\sqrt{a}}\right\Vert_{L^{\infty}(Q)}\leqslant K,\qquad \vert d(x,t)\vert\leqslant C \sqrt{a(x)}.$$
Then there exists a constant $C>0$ and $\theta \in (0,1)$ depending on $t_{0}$ and $M_1$ satisfying
\begin{align*}
    \| u(\cdot,t_{0})-v(\cdot,t_{0})\|_{\frac{1}{a}}\leqslant C\left( \| u(\cdot,T)-v(\cdot,T)\|^{\theta}_{1,\frac{1}{a}}+\| u(\cdot,T)-v(\cdot,T)\|_{1,\frac{1}{a}}\right)
\end{align*}
\end{theorem}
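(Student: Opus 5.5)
We will prove this by linearization: the difference $y := u - v$ satisfies a linear degenerate equation whose extra terms can be absorbed into the left-hand side of the Carleman estimate \eqref{Carlm HJB} of Lemma~\ref{lem1}. After that, the argument of Theorem~\ref{15} applies essentially unchanged.

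\textbf{Step 1: the linear equation for $y$.} Subtracting the two equations \eqref{23} gives
\begin{equation*}
y_t - a(x)y_{xx} = -\frac{b}{q}\left(\vert u_x\vert^q - \vert v_x\vert^q\right) - d\,y_x - c\,y =: \widetilde G, \qquad y=0 \text{ on } \Sigma .
\end{equation*}
By the mean value theorem applied to $r\mapsto \vert r\vert^q$ (with $q\geqslant 1$; for $q=1$ use the Lipschitz bound $\bigl\vert \vert r\vert - \vert \rho\vert\bigr\vert\leqslant \vert r-\rho\vert$), we get
\begin{equation*}
\bigl\vert \vert u_x\vert^q-\vert v_x\vert^q\bigr\vert \leqslant q\left(\vert u_x\vert^{q-1}+\vert v_x\vert^{q-1}\right)\vert y_x\vert .
\end{equation*}
In Case 1, $\vert u_x\vert,\vert v_x\vert\leqslant K$ and $\vert b\vert+\vert d\vert\leqslant C\sqrt a$. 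In Case 2, $\vert u_x\vert^{q-1}+\vert v_x\vert^{q-1}\leqslant 2K\sqrt a$, $b$ is bounded and $\vert d\vert \leqslant C\sqrt a$. In both cases this yields the pointwise bound
\begin{equation*}
\vert \widetilde G\vert \leqslant C_K\left(\sqrt{a(x)}\,\vert y_x\vert + \vert y\vert\right).
\end{equation*}
This is exactly why the assumptions carry the factor $\sqrt a$: it cancels the weight $1/a$ in the source term of the Carleman estimate.

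\textbf{Step 2: absorption.} Apply Lemma~\ref{lem1} to $y$ with $G=\widetilde G$. The source term is bounded by
\begin{equation*}
C\int_Q s\varphi \frac{1}{a}\vert \widetilde G\vert^2 e^{2s\varphi} \leqslant C\int_Q \left(s\varphi\vert y_x\vert^2 + \frac{s\varphi}{a}\vert y\vert^2\right)e^{2s\varphi}.
\end{equation*}
The left-hand side of \eqref{Carlm HJB} contains $s\lambda\varphi\vert y_x\vert^2$ and $s^2\lambda^2\varphi^2 a^{-1}\vert y\vert^2$. Fix $\lambda$ large so that $C\leqslant \lambda/2$; this absorbs the first term. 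Then take $s$ large; since $\varphi\geqslant 1$, this absorbs the second. The result is the Carleman estimate for $y$ with no interior source term, containing only the boundary terms at $t=0$ and $t=T$.

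I expect this absorption to be the main obstacle. The gradient term sits at the same power of $s$ on both sides, so only $\lambda$ can absorb it, and the constant $s_0(\lambda)$ must be chosen after $\lambda$ is fixed. One must also check that this order of choices is compatible with Lemma~\ref{lem1}, where $\lambda>\lambda_0$ is arbitrary and $s_0$ depends on $\lambda$. It is.

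\textbf{Step 3: conclusion.} The initial term is controlled by $\| y(\cdot,0)\|_{1,\frac1a}\leqslant 2M_1$. From here we repeat the proof of Theorem~\ref{15} verbatim for $y$:
\begin{enumerate}
\item restrict to $(t_0,T)$ using $\varphi(t)\geqslant\varphi(t_0)$;
\item bound $\|y_t\|_{L^2(t_0,T;L^2_{\frac1a})}$;
\item write $y(\cdot,t_0)=y(\cdot,T)-\int_{t_0}^T y_t\,dt$;
\item obtain $\|y(\cdot,t_0)\|^2_{\frac1a}\leqslant C D_0^2e^{3s\varphi(T)}+CM_1^2e^{-s\alpha(t_0)}$ for all $s>0$, where $D_0=\|y(\cdot,T)\|_{1,\frac1a}$;
\item optimize in $s$, splitting into the cases $M_1\leqslant D_0$ and $M_1>D_0$.
\end{enumerate}
This gives the Hölder estimate with $\theta=\alpha(t_0)/(3\varphi(T)+\alpha(t_0))$.
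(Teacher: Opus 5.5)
Your proposal is correct and follows the paper's route. You set $y=u-v$, bound the Hamiltonian difference by $q(\vert u_x\vert^{q-1}+\vert v_x\vert^{q-1})\vert y_x\vert$, absorb the resulting source into the left-hand side of \eqref{Carlm HJB}, and conclude as in Theorem~\ref{15}. Your unified bound $\vert\widetilde G\vert\leqslant C_K(\sqrt{a}\,\vert y_x\vert+\vert y\vert)$ also finishes Case~2, which the paper leaves incomplete. Absorbing the gradient term by taking $\lambda$ large is legitimate only because the constant in front of the source term in Lemma~\ref{lem1} is independent of $\lambda$. That fact comes from the lemma's proof, not its statement, so you should justify it rather than assert ``It is.''
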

\begin{proof}
    Setting $y=u-v$ and $r(x)=\frac{1}{q}(\vert u_{x}\vert^q-\vert v_{x}\vert^q)$, from \eqref{23} we obtain
   \begin{equation}
       \left\{
\begin{aligned}
& y_{t}(x,t) +a(x) y_{xx}(x,t)=-d(x,t)y_{x}-c(x,t)y -b(x,t)r(x)\quad\text{ in } Q,\\
& y(x,t)=0 \hspace{8.5cm}\text{ on } \Sigma,
\label{hjeq1to4}
\end{aligned}
\right.
\end{equation}

\noindent{\bf Case 1:} Using the following key inequality
\begin{equation}\label{kineq}
\vert a^q-b^q \vert \leqslant q \left(a^{q-1}+b^{q-1}\right) \vert a-b\vert,
\end{equation}
for all $q \geqslant 1, \;\;a,b\geqslant 0$, we obtain
\begin{align*}
    \vert r(x)\vert =\frac{1}{q}\left\| u_{x}\vert^q-\vert v_{x}\vert^q\right\vert  &\leqslant \left( \vert u_{x}\vert^{q-1}+\vert v_{x}\vert^{q-1}\right)(\vert u_{x}\vert-\vert v_{x}\vert) \\&\leqslant \left( \vert u_{x}\vert^{q-1}+\vert v_{x}\vert^{q-1}\right)\vert y_{x}\vert\\& \leqslant C\vert y_{x}\vert.
\end{align*}
Then we apply lemma \eqref{1} to obtain
\begin{align*}
    &\int_{Q} \left(\frac{1}{a(x)}\vert y_{t}\vert^2+ a(x)\vert y_{xx}\vert^2+ s\lambda \varphi  \vert y_{x}\vert^2+ \frac{s^2\lambda^2\varphi^2}{a(x)}\vert u\vert^2\right)e^{2s\varphi}dxdt\\&\leqslant C  \int_{Q}s\varphi\frac{1}{a(x)}\vert b(x,t)r(x)+ d(x,t)y_{x} + c(x,t) y\vert^2e^{2s\varphi} dxdt \\&+C( s,\lambda)\left( s\lambda\varphi(T)\| y(x,T)\|_{\frac{1}{a}}^2+\| y(x,T)\|_{1,\frac{1}{a}}^2 \right)e^{2s\varphi(T)} \\&\qquad+C(s,\lambda)\left( s\lambda\| y(x,0)\|_{\frac{1}{a}}^2+\| y(x,0)\|_{1,\frac{1}{a}}^2\right)e^{2s}\\ &\leqslant C  \int_{Q}s\varphi \frac{b^2(x,t)}{a(x)}\vert y_{x}\vert^2 dxdt + \int_{Q}s\varphi{\frac{d^2(x,t)}{a(x)}}\vert y_{x}\vert^2+ C \int_{Q}s\varphi\frac{1}{a(x)}  \vert y\vert^2e^{2s\varphi} dxdt \\&+C( s,\lambda)\left( s\lambda\varphi(T)\| y(\cdot,T)\|_{\frac{1}{a}}^2+\| y(\cdot,T)\|_{1,\frac{1}{a}}^2 \right)e^{2s\varphi(T)}\\& \qquad+C(s,\lambda)\left( s\lambda\| y(\cdot,0)\|_{\frac{1}{a}}^2+\| y(\cdot,0)\|_{1,\frac{1}{a}}^2\right)e^{2s}.
\end{align*}
Using the fact that $\vert r(x) \vert \leqslant C \vert y_{x}\vert, \;\vert b(x,t)+ d(x,t)\vert\leqslant C \sqrt{a(x)} \;\; \forall (x,t)\in Q $ and $c\in L^{\infty}(Q)$. The first term of the right-hand side can be absorbed, and by choosing $s>0$ and $\lambda >0$ very large, we obtain
\begin{equation*}
    \begin{aligned}
    &\int_{Q} \left(\frac{1}{a(x)}\vert y_{t}\vert^2+ a(x)\vert y_{xx}\vert^2+ s\lambda \varphi \vert y_{x}\vert^2+ \frac{s^2\lambda^2\varphi^2}{a(x)}\vert u\vert^2\right)e^{2s\varphi}dxdt\\&
    \leqslant C( s,\lambda)\left( s\lambda\varphi(T)\| y(\cdot,T)\|_{\frac{1}{a}}^2+\| y(\cdot,T)\|_{1,\frac{1}{a}}^2 \right)e^{2s\varphi(T)}\\& \quad +C(s,\lambda)\left( s\lambda\| y(\cdot,0)\|_{\frac{1}{a}}^2+\| y(\cdot,0)\|_{1,\frac{1}{a}}^2\right)e^{2s}.
\end{aligned}
\end{equation*}
Then the same argument as in Theorem \ref{15} can complete the proof.
\smallskip

\noindent{\bf Case 2:} Since we have 
\begin{align*}
    \vert r(x) \vert \leqslant \left(\vert u_{x}\vert^{q-1}+\vert v_{x}\vert^{q-1}\right)\vert y_{x}\vert,
\end{align*}
then we apply Lemma \ref{lem1} to obtain
\begin{align*}
    &\int_{Q} \left(\frac{1}{a(x)}\vert y_{t}\vert^2+ a(x)\vert y_{xx}\vert^2+ s\lambda \varphi  \vert y_{x}\vert^2+ \frac{s^2\lambda^2\varphi^2}{a(x)}\vert u\vert^2\right)e^{2s\varphi}dxdt\\&\leqslant C  \int_{Q}s\varphi\frac{1}{a(x)}\vert b(x,t)r(x)+ d(x,t)y_{x} + c(x,t) y\vert^2e^{2s\varphi} dxdt \\&\qquad+C( s,\lambda)\left( s\lambda\varphi(T)\| y(\cdot,T)\|_{\frac{1}{a}}^2+\| y(\cdot,T)\|_{1,\frac{1}{a}}^2 \right)e^{2s\varphi(T)} \\&\qquad
    +C(s,\lambda)\left( s\lambda\| y(\cdot,0)\|_{\frac{1}{a}}^2+\| y(\cdot,0)\|_{1,\frac{1}{a}}^2\right)e^{2s}\\ &\leqslant C  \int_{Q}s\varphi \lvert \left( \frac{\vert u_{x}\vert^{q-1}}{\sqrt{a(x)}}+\frac{\vert u_{x}\vert^{q-1}}{\sqrt{a(x)}}\right)y_{x}\rvert^2 dxdt + \int_{Q}s\varphi{\frac{d^2(x,t)}{a(x)}}\vert y_{x}\vert^2\\& \qquad+ C \int_{Q}s\varphi\frac{1}{a(x)}  \vert y\vert^2e^{2s\varphi} dxdt +C( s,\lambda)\left( s\lambda\varphi(T)\| y(\cdot,T)\|_{\frac{1}{a}}^2+\| y(\cdot,T)\|_{1,\frac{1}{a}}^2 \right)e^{2s\varphi(T)}\\&\qquad+C(s,\lambda)\left( s\lambda\| y(\cdot,0)\|_{\frac{1}{a}}^2+\| y(\cdot,0)\|_{1,\frac{1}{a}}^2\right)e^{2s}.
\end{align*}

\end{proof}
\section{Identification for linear degenerate parabolic problem}\label{sec4}
We consider the following degenerate parabolic equation 
\begin{equation}
    \left\{
\begin{aligned}
& u_{t}(x,t) -a(x) u_{xx}(x,t)=0, &&\quad \text { in } Q,\\
& u(x,t)=0, &&\quad \text{ on } \Sigma,\\
&u(x,0)=f(x), &&\quad \text { in } \Omega.
\label{eq1to10}
\end{aligned}
\right.
\end{equation}
Recall that the backward problem consists of the determination of $f=u(\cdot,0)$ from the final state data $u_{T}(x)=u(x,T).$ For a given $u(\cdot,0)=f \in L^2_{\frac{1}{a}}(0,1)$, the parabolic problem \eqref{eq1to10} will be referred to as the direct problem.

The input-output operator $\Psi$ is defined as follows
$$\Psi(f)(x)=u(x,T;f), \qquad x\in (0,1).$$
Therefore, the backward problem can be written as an operator equation as follows
\begin{align*}
    \Psi(f)=u_{T}, \qquad f\in L^2_{\frac{1}{a}}(0,1) 
\end{align*}
\begin{lemma}\label{lemma1}
    The input-output operator
    $$\Psi\;:\; L^2_{\frac{1}{a}}(0,1) \ni f \longmapsto u(\cdot,T;f)\in L^2_{\frac{1}{a}}(0,1).$$
    is not surjective.
\end{lemma}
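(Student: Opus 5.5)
We need to show $\Psi$ is not surjective. Since $A$ is self-adjoint and m-dissipative in $L^2_{\frac1a}(0,1)$ (Theorem \ref{thmg1}), $\Psi = e^{TA}$, with $(e^{tA})_{t\ge0}$ the analytic semigroup it generates. The plan is to show that the range of $\Psi$ is contained in a proper subspace of $L^2_{\frac1a}(0,1)$.

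\emph{Range inclusion.} Analyticity of the semigroup gives $e^{TA}L^2_{\frac1a}(0,1)\subset D(A)=H^2_{\frac1a}(0,1)$ for every $T>0$; in fact it maps into $D(A^k)$ for all $k$. A self-contained version of the same fact comes from the spectral theorem for the self-adjoint operator $-A\ge 0$:
\[
\|A e^{TA}f\|_{\frac1a}\le \sup_{\mu\ge0}\mu e^{-T\mu}\,\|f\|_{\frac1a}=\frac{1}{eT}\|f\|_{\frac1a}.
\]
In particular $u(\cdot,T;f)\in H^1_{\frac1a}(0,1)\subset H^1_0(0,1)$ for every $f$.

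\emph{Exhibiting a target outside the range.} I would take the constant function $g\equiv 1$. To see that $g\in L^2_{\frac1a}(0,1)$ I need $\int_0^1 \frac{dx}{a}<\infty$, i.e. weak degeneracy. This is a genuine restriction, because Assumption I alone allows strongly degenerate $a$ for which constants are not in $L^2_{\frac1a}$.

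So the robust choice is an abstract one. $D(A)$ is a proper dense subspace of $L^2_{\frac1a}(0,1)$, because $A$ is unbounded: $au_{xx}$ is not controlled by $u$ in $L^2_{\frac1a}$. Taking any $g\in L^2_{\frac1a}(0,1)\setminus D(A)$ then gives $g\notin\operatorname{Ran}\Psi$. To make this concrete, I would pick $g\in L^2_{\frac1a}(0,1)$ that is not in $H^1_0(0,1)$. For instance, let $g$ be the indicator of a subinterval $[\alpha,\beta]\Subset(0,1)$. It lies in $L^2_{\frac1a}(0,1)$ because $a$ is continuous and positive on $[\alpha,\beta]$, so $1/a$ is bounded there. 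But $g$ has jumps, so $g\notin H^1$, hence $g\notin D(A)$, and therefore $g\neq \Psi(f)$ for any $f$.

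\emph{Compactness alternative.} A second route, giving a stronger statement, is to show that $\Psi$ is compact with infinite-dimensional range. Then its range cannot be closed unless it is finite-dimensional, so $\Psi$ is not surjective. This would require compactness of the embedding $H^1_{\frac1a}(0,1)\hookrightarrow L^2_{\frac1a}(0,1)$, which near the degenerate endpoints needs a Hardy-type inequality. I would avoid this route and rely on the regularity argument above.

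\emph{Main obstacle.} The delicate point is verifying that the indicator function, or another chosen $g$, really lies in $L^2_{\frac1a}$ but not in $H^1_{\frac1a}$. With a compactly supported discontinuous $g$, the degeneracy at the endpoints plays no role, so this step should be straightforward.
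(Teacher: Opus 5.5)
Your proof is correct and follows the same route as the paper: both show that $u(\cdot,T;f)$ always lies in $H^1_{\frac{1}{a}}(0,1)$, and then take a target $g\in L^2_{\frac{1}{a}}(0,1)\setminus H^1_{\frac{1}{a}}(0,1)$. Your version is more careful on two points: the bound $\|Ae^{TA}f\|_{\frac{1}{a}}\le (eT)^{-1}\|f\|_{\frac{1}{a}}$ justifies this regularity for every $f\in L^2_{\frac{1}{a}}(0,1)$ (the paper cites Theorem~\ref{thmgenni}, which states it only for $f\in H^1_{\frac{1}{a}}(0,1)$), and your indicator $\mathbf{1}_{[\alpha,\beta]}$ shows explicitly that $L^2_{\frac{1}{a}}(0,1)\setminus H^1_{\frac{1}{a}}(0,1)$ is nonempty, which the paper takes for granted.
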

\begin{proof}
    To derive a contradiction, let's assume that $\Psi$ is surjective. Let $g \in L^2_{\frac{1}{a}}(0,1)\backslash H^1_{\frac{1}{a}}(0,1).$ Then, there exists an element $f \in L^2_{\frac{1}{a}}(0,1)$ such that $\Psi(f)=g,$ which implies $u(\cdot,T)=g,$ where $u$ is a solution of \eqref{eq1to10}. However, this contradicts the fact that $u(\cdot,T)\in  H^1_{\frac{1}{a}}(0,1) $ (see Theorem \ref{thmgenni}).
\end{proof}
From Lemma \ref{lemma1}, we conclude that the backward problem is ill-posed. Moreover, we define a quasi-solution of this inverse problem as a minimizer of the functional $J$ defined by
\begin{align*}
    J(f)=\frac{1}{2}\int_{0}^{1}\frac{1}{a(x)}\vert u(x,T;f)-u^{\delta}_{T}(x)\vert^2dx=\frac{1}{2}\|u(\cdot,T;f)-u^{\delta}_T\|^2_{\frac{1}{a}}, \;\; f\in W,
\end{align*}
where $u^{\delta}_T$ is a noisy data of $u_T$ such that $\|u_T-u^{\delta}_T\|_{\frac{1}{a}} \leqslant \delta$ for $\delta\geqslant 0$, and   $$W=\left\lbrace f \in L_{\frac{1}{a}}^2(0,1)\,:\; \; \|f\|_{\frac{1}{a}}\leqslant M\right\rbrace$$
is the set of admissible initial data of size $M$ (fixed positive constant). Obviously, $W$ is a bounded, closed and convex subset of $L_{\frac{1}{a}}^2(0,1).$

Due to the ill-posedness of the backward problem, one usually uses a regularization approach by introducing the Tikhonov functional
$$ J_{\epsilon}(f)= \frac{1}{2}\|u(\cdot,T;f)-u^{\delta}_T\|^2_{\frac{1}{a}}+\frac{\epsilon}{2}\|f\|^2_{\frac{1}{a}}, \qquad f\in W,$$
where $\epsilon>0$ is the regularizing parameter.

In the following lemma, we establish a conditional stability estimate using the logarithmic convexity method; see, e.g., \cite{Payne}, and \cite{Car18} for a survey.
\begin{lemma}
The solution of \eqref{eq1to10} satisfies 
    \begin{align}\label{convx}
        \|u(\cdot,t)\|_{\frac{1}{a}}\leqslant M^{1-\frac{t}{T}}\|u(\cdot,T)\|_{\frac{1}{a}}^{\frac{t}{T}} \qquad \forall t \in [0,T],
    \end{align}
for all $f\in W.$
\end{lemma}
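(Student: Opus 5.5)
We use the classical logarithmic convexity argument, with the self-adjointness and dissipativity of $A$ in $L^2_{\frac{1}{a}}(0,1)$ (Theorem \ref{thmg1}) as the structural input. Set $F(t):=\|u(\cdot,t)\|_{\frac{1}{a}}^2$ for $t\in[0,T]$.

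If $F(T)=0$, then $u(\cdot,t)=0$ for all $t\in[0,T]$. This follows from backward uniqueness for analytic semigroups: the map $t\mapsto e^{tA}f$ is analytic on $(0,\infty)$, and by self-adjointness $\|e^{tA}f\|^2=\langle e^{2tA}f,f\rangle$ vanishes at $t=T$, hence $e^{TA}f=0$ and then $e^{tA}f=0$ for all $t$ by the spectral representation. In that case \eqref{convx} is trivial. So we may assume $F(t)>0$ on $[0,T]$.

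\textbf{Step 1 (first derivative).} By analyticity, $u(\cdot,t)\in D(A)$ for $t>0$. Using $u_t=Au$ and self-adjointness, we get $F'(t)=2\langle Au,u\rangle_{\frac{1}{a}}$. The Green formula gives the explicit form $F'(t)=-2\int_0^1 u_x^2\,dx$.

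\textbf{Step 2 (second derivative).} Differentiating again and using symmetry of $A$,
$$F''(t)=2\langle Au_t,u\rangle_{\frac{1}{a}}+2\langle Au,u_t\rangle_{\frac{1}{a}}=4\|Au\|_{\frac{1}{a}}^2 .$$

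\textbf{Step 3 (convexity of $\log F$).} By Cauchy--Schwarz,
$$(F')^2=4\langle Au,u\rangle_{\frac{1}{a}}^2\le 4\|Au\|_{\frac{1}{a}}^2\,\|u\|_{\frac{1}{a}}^2=F''F.$$
Hence $(\log F)''=(FF''-(F')^2)/F^2\ge 0$ on $(0,T)$. By continuity of $F$ on $[0,T]$ (Theorem \ref{thmgenni}), $\log F$ is convex on $[0,T]$.

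\textbf{Step 4 (conclusion).} Write $t=(1-\tfrac tT)\cdot 0+\tfrac tT\cdot T$. Convexity gives
$$\log F(t)\le (1-\tfrac tT)\log F(0)+\tfrac tT\log F(T).$$
Exponentiating and taking square roots yields
$$\|u(\cdot,t)\|_{\frac{1}{a}}\le \|f\|_{\frac{1}{a}}^{1-\frac tT}\|u(\cdot,T)\|_{\frac{1}{a}}^{\frac tT}.$$
Since $f\in W$ gives $\|f\|_{\frac{1}{a}}\le M$, this is \eqref{convx}.

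\textbf{Main obstacle.} The delicate point is justifying the differentiations in Steps 1–2 for $f$ merely in $L^2_{\frac{1}{a}}(0,1)$, especially near $t=0$. I would handle it as follows. For $t>0$, analyticity of the semigroup gives $u\in C^\infty((0,T];D(A^k))$ for all $k$, so both derivative formulas hold on $(0,T]$. We then prove the interpolation inequality on $[\varepsilon,T]$ for every $\varepsilon>0$ and let $\varepsilon\to0$, using continuity of $F$ at $0$.

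Alternatively, expand $f$ in the orthonormal eigenbasis $\{e_k\}$ of the self-adjoint operator $A$ with eigenvalues $-\mu_k\le 0$. Then $F(t)=\sum_k e^{-2\mu_k t}|f_k|^2$, and log-convexity follows directly from Hölder's inequality with exponents $T/t$ and $T/(T-t)$. This requires compact resolvent, which holds via the compact embedding $H^1_{\frac{1}{a}}\hookrightarrow L^2_{\frac{1}{a}}$; otherwise the spectral theorem with a spectral measure gives the same Hölder argument.
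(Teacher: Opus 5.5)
Your proposal is correct and follows the same logarithmic-convexity route as the paper: you compute $F'=-2\int_0^1 u_x^2\,dx$ and $F''=4\|au_{xx}\|_{\frac{1}{a}}^2$ and apply Cauchy--Schwarz, and you are more careful than the paper, which only sets aside $f=0$ and never justifies $F>0$ on $[0,T]$ (backward uniqueness) or the differentiations near $t=0$. One correction to your alternative route: under Assumption I alone the embedding $H^1_{\frac{1}{a}}(0,1)\hookrightarrow L^2_{\frac{1}{a}}(0,1)$ need not be compact; for $a=(x(1-x))^4$, as in the paper's own examples, rescaled bumps $\delta^{3/2}\phi(x/\delta)$ with disjoint supports are bounded in the graph norm of $A$ while their $L^2_{\frac{1}{a}}$-norms stay bounded below, so only your spectral-measure version of the H\"older argument works in general.
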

\begin{proof} Let $f \neq 0$, otherwise the estimate is trivial.
    We have 
   \begin{equation*}
       \begin{aligned}
       \frac{d}{dt}\|u(\cdot,t)\|_{\frac{1}{a}}^2&=2(u_{t}(\cdot,t),u(\cdot,t))_{\frac{1}{a}}\\&=2(au_{xx}(\cdot,t),u(\cdot,t))_{\frac{1}{a}}\\& =2\int_0^1u_{xx}udx \\&=-2\int_0^1 u_x^2 dx
   \end{aligned}
   \end{equation*}
   and 
 \begin{equation*}
     \begin{aligned}
         \frac{d^2}{dt^2}\|u(\cdot,t)\|_{\frac{1}{a}}^2&=-4\int_0^1 (u_t)_x u_x dx\\&=4 \int_0^1 u_t u_{xx} dx \\&=4 \int_0^1 a(u_{xx})^2 dx\\&= 4\|au_{xx}\|_{\frac{1}{a}}^2.
     \end{aligned}
 \end{equation*}
By the Cauchy–Schwarz inequality, we obtain
 \begin{align*}
    \hspace{-0.15cm} \left( \frac{d^2}{dt^2}\|u(\cdot,t)\|_{\frac{1}{a}}^2\right)\|u(\cdot,t)\|_{\frac{1}{a}}^2-&\left( \frac{d}{dt}\|u(\cdot,t)\|_{\frac{1}{a}}^2\right)^2\\&=4\left(\|au_{xx}\|_{\frac{1}{a}}^2\|u(\cdot,t)\|_{\frac{1}{a}}^2-(au_{xx}(\cdot,t),u(\cdot,t))_{\frac{1}{a}}^2\right)\geqslant 0.
 \end{align*}
 Since,
 $$\frac{d^2}{dt^2}\log\left(\|u(\cdot,t)\|_{\frac{1}{a}}^2\right)=\frac{\left( \frac{d^2}{dt^2}\|u(\cdot,t)\|_{\frac{1}{a}}^2\right)\|u(\cdot,t)\|_{\frac{1}{a}}^2-\left( \frac{d}{dt}\|u(\cdot,t)\|_{\frac{1}{a}}^2\right)^2}{\|u(\cdot,t)\|_{\frac{1}{a}}^4}\geqslant 0,$$
$\log\left(\|u(\cdot,t)\|_{\frac{1}{a}}^2\right)$ is a convex function of $t$, which gives
 $$\log\left(\|u(\cdot,t)\|_{\frac{1}{a}}^2\right)\leqslant \left(1-\frac{t}{T}\right)\log\left(\|u(\cdot,0)\|_{\frac{1}{a}}^2\right)+\frac{t}{T}\log\left(\|u(\cdot,T)\|_{\frac{1}{a}}^2\right).$$
Thus, we deduce \eqref{convx}.
\end{proof}
\subsection{Fréchet differentiability of the Tikhonov functional}
We recall the following definition:
\begin{definition}
    Let $f \in L^{2}_{\frac{1}{a}}(0,1)$. A function $u$ is a weak solution of \eqref{eq1to10} if 
    $$ u \in C\left([0,T];L^{2}_{\frac{1}{a}}(0,1)\right)\cap L^2\left(0,T; H^1_{\frac{1}{a}}(0,1)\right)$$
    and satisfies
\begin{equation*}
    \begin{aligned}
        &\int_{0}^1\frac{1}{a(x)}u(x,T)v(x,T) dx-\int_{0}^1\frac{1}{a(x)}u(x,0)v(x,0) dx-\int_{Q}\frac{1}{a(x)}u(x,t)v_t(x,t)dxdt\\&= -\int_{Q}u_x(x,t)v_x(x,t)dxdt
    \end{aligned}
\end{equation*}
for all $v \in H^1\left(0,T;L^{2}_{\frac{1}{a}}(0,1)\right)\cap L^2\left(0,T; H^1_{\frac{1}{a}}(0,1)\right).$
\end{definition}
\begin{lemma}
    Denote by $u(x,t;f)$ the weak solution of the parabolic degenerate problem \eqref{eq1to10} corresponding to the unknown initial datum $ f\in W.$ Then functional $J$ is Fr\'echet differentiable at $f$ and  the Fr\'echet gradient is given by
    \begin{align}\label{34}
         J'(f)(x)= \psi(x,0;f)\qquad \text{for a.e. }\quad x\in (0,1),
    \end{align}
    where $\psi(x,t;f)$ is the weak solution of the adjoint problem
    \begin{equation}
        \left\{
\begin{aligned}
& \psi_{t}(x,t) = -a(x) \psi_{xx}(x,t), &&\quad \text { in } Q,\\
& \psi=0, &&\quad \text { on } \Sigma,\\
& \psi(x,T)=u(x,T;f)-u^{\delta}_{T}(x), &&\quad \text { in } \Omega.
\label{eq1to5}
\end{aligned}
\right.
\end{equation}
\end{lemma}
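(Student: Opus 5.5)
We will compute the increment of $J$ directly, exploiting linearity of the forward map. Fix $f\in W$ and a perturbation $\delta f\in L^2_{\frac{1}{a}}(0,1)$ with $f+\delta f\in W$. Let $\delta u(x,t):=u(x,t;f+\delta f)-u(x,t;f)$. By linearity of \eqref{eq1to10}, $\delta u$ is the weak solution of the same degenerate problem with initial datum $\delta f$. Expanding the square gives
\begin{equation*}
J(f+\delta f)-J(f)=\int_0^1\frac{1}{a(x)}\bigl(u(x,T;f)-u^\delta_T(x)\bigr)\delta u(x,T)\,dx+\frac12\|\delta u(\cdot,T)\|^2_{\frac{1}{a}}.
\end{equation*}
The energy estimate of Theorem \ref{thmgenni} (with $G=0$) gives $\|\delta u(\cdot,T)\|^2_{\frac{1}{a}}\leqslant C\|\delta f\|^2_{\frac{1}{a}}$. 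Hence the quadratic term is $o(\|\delta f\|_{\frac{1}{a}})$. It remains to identify the linear term as $(\psi(\cdot,0),\delta f)_{\frac{1}{a}}$.

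To do this we introduce the adjoint problem \eqref{eq1to5}. Since \eqref{eq1to5} is backward in time, set $\tilde\psi(x,t)=\psi(x,T-t)$. Then $\tilde\psi$ solves the forward equation $\tilde\psi_t-a\tilde\psi_{xx}=0$ with initial datum $u(\cdot,T;f)-u^\delta_T\in L^2_{\frac{1}{a}}(0,1)$. Theorem \ref{thmgenni} therefore gives a unique weak solution $\psi\in C([0,T];L^2_{\frac{1}{a}})\cap L^2(0,T;H^1_{\frac{1}{a}})$, and $\psi(\cdot,0)$ is well defined in $L^2_{\frac{1}{a}}(0,1)$.

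Next comes the duality identity. Formally, multiply the equation for $\delta u$ by $\psi/a$ and integrate over $Q$. Integrate by parts in $t$, and use the Green formula in $x$ (both functions vanish on $\Sigma$), so that $\int_Q \delta u_{xx}\psi\,dxdt=-\int_Q\delta u_x\psi_x\,dxdt=\int_Q\delta u\,\psi_{xx}\,dxdt$. Then use $\psi_t=-a\psi_{xx}$. The interior terms cancel, leaving
\begin{equation*}
\int_0^1\frac{1}{a}\delta u(x,T)\psi(x,T)\,dx=\int_0^1\frac{1}{a}\delta f(x)\psi(x,0)\,dx .
\end{equation*}
Since $\psi(\cdot,T)=u(\cdot,T;f)-u^\delta_T$, the linear term equals $(\psi(\cdot,0),\delta f)_{\frac{1}{a}}$. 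This shows $J(f+\delta f)-J(f)=(\psi(\cdot,0),\delta f)_{\frac1a}+O(\|\delta f\|^2_{\frac1a})$, which is \eqref{34}.

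The main obstacle is justifying this duality identity rigorously at the weak-solution level. One cannot plug $\psi$ directly into the weak formulation, because $\psi$ is only known to lie in $L^2(0,T;H^1_{\frac{1}{a}})$, not in $H^1(0,T;L^2_{\frac{1}{a}})$, and the weight $1/a$ is singular at the endpoints. I would argue by density. First approximate $\delta f$ and $\psi(\cdot,T)$ by elements of $D(A)=H^2_{\frac{1}{a}}(0,1)$. For such data the semigroup of Theorem \ref{thmg1} gives strong solutions in $H^1(0,T;L^2_{\frac{1}{a}})\cap L^2(0,T;H^1_{\frac{1}{a}})$, so the definition of weak solution applies with this test function and the computation above is legitimate via the Green formula. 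Alternatively, and more cleanly, the semigroup $S(t)$ is self-adjoint in $L^2_{\frac{1}{a}}$. Then $\delta u(T)=S(T)\delta f$ and $\psi(0)=S(T)\psi(T)$, so the identity is just $(S(T)\delta f,\psi(T))_{\frac1a}=(\delta f,S(T)\psi(T))_{\frac1a}$. Passing to the limit uses continuity of $S(T)$ on $L^2_{\frac{1}{a}}(0,1)$.
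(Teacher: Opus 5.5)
Your proposal is correct and follows the same route as the paper. You expand the increment of $J$, identify the linear term with $(\psi(\cdot,0),\delta f)_{\frac{1}{a}}$ by integrating by parts in $t$ and applying the Green formula in $x$, and bound the quadratic remainder with the energy estimate of Theorem \ref{thmgenni}. Two points go beyond the paper and make your version more rigorous than its formal computation: you justify the duality identity at the weak-solution level via the self-adjoint semigroup of Theorem \ref{thmg1}, and you correctly state the remainder as $O(\|\delta f\|^2_{\frac{1}{a}})$, where the paper writes $o(\|\delta f\|^2)$.
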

\begin{proof}
    Let $f,\, f+\delta f \in W$ be given initial data and $u(x,t;f), \, u(x,t;f+\delta f)$ be corresponding solutions of (\ref{eq1to10}). Then 
    $\delta u(x,t;f)=u(x,t;f+\delta f)-u(x,t,f)$
    is the weak  solution of the following system
\begin{equation}
    \left\{
\begin{aligned}
&\delta u_{t}(x,t) = a(x) \delta u_{xx}(x,t),&&\quad \text { in } Q\\
&\delta u(x,t)=0, &&\quad \text { on } \Sigma,\\
& \delta u(x,0)=\delta f(x), &&\quad \text { in } \Omega.
\label{eq1to6}
\end{aligned}
\right.
\end{equation}
Let us calculate the increment $\delta J(f)=J(f+\delta f)-J(f)$ of functional (\ref{2})
\begin{align*}
    \delta J(f)&=\frac{1}{2}\|u(\cdot,T;f+\delta f)-u^{\delta}_T\|_{\frac{1}{a}}-\frac{1}{2}\|u(\cdot,T;f)-u_T^{\delta}\|_{\frac{1}{a}}\\&=\frac{1}{2}\int_{0}^1\frac{1}{a(x)}\left[(u(x,T;f+\delta f)-u^{\delta}_T(x))^2-(u(x,T;f)-u^{\delta}_T(x))^2\right].
\end{align*}
Using the identity
$$\frac{1}{2}\left[(r-t)^2-(s-t)^2\right]=(s-t)(r-s)+\frac{1}{2}(r-s)^2, \;r,s,t\in \mathbb{R},$$
we obtain
\begin{align*}
    \delta J(f)=\int_{ 0}^1\frac{1}{a(x)}\left[u(x,T;f)-u^{\delta}_{T}(x)\right]\delta u(x,T;f)dx + \frac{1}{2}\int_{0}^1 \frac{1}{a(x)} \left[\delta u(x,T;f)\right]^2 dx.
\end{align*}
We have 
\begin{equation*}
\begin{aligned}
&\int_{ 0}^1\frac{1}{a(x)}\left[u(x,T;f)-u^{\delta}_{T}(x)\right]\delta u(x,T;f)dx \\ & = \int_{ 0}^1\frac{1}{a(x)}\psi(x,T)\delta u(x,T) dx\\ &= \int_{ 0}^1 \left\lbrace \int_{0}^T \frac{1}{a(x)}\left( \psi(x,t)\delta u(x,t)\right)_{t} dt\right\rbrace + \int_{ 0}^1\frac{1}{a(x)}\psi(x,0)\delta u(x,0) dx\\ &= \int_{ 0}^1 \int_{0}^T\frac{1}{a(x)}\left( \psi_{t}\delta u + \psi \delta u_{t}\right) dt dx + \int_{ 0}^1\frac{1}{a(x)}\psi(x,0)\delta u(x,0) dx \\ & = \int_{ 0}^1 \int_{0}^T\frac{1}{a(x)}\left( -a(x)\psi_{xx}\delta u + \psi a(x)\delta u_{xx}\right) dt dx + \int_{ 0}^1\frac{1}{a(x)}\psi(x,0)\delta u(x,0) dx \\&= \int_{0}^T\left( -\psi_{xx}\delta u + \psi\delta u_{xx}\right) dt dx + \int_{ 0}^1\frac{1}{a(x)}\psi(x,0)\delta u(x,0) dx \\&= \int_{ 0}^1\frac{1}{a(x)}\psi(x,0)\delta u(x,0) dx.
\end{aligned}
\end{equation*}
This implies 
\begin{equation*}
    \begin{aligned}
        \delta J(f)=&\int_{ 0}^1\frac{1}{a(x)}\psi(x,0)\delta u(x,0) dx+\frac{1}{2}\int_{0}^1 \left[\delta u(x,T;f)\right]^2 dx\\&
        =(J'(f),\delta f)_{\frac{1}{a}}+\frac{1}{2}\int_{0}^1 \frac{1}{a(x)} \left[\delta u(x,T;f)\right]^2 dx.
    \end{aligned}
\end{equation*}
Using the result of Theorem \ref{thmgenni} yields 
$$ \int_{0}^1 \frac{1}{a(x)}\left[\delta u(x,T;f)\right]^2 dx\leqslant \vert \vert \delta f\vert \vert_{\frac{1}{a}}^2. $$
Hence, 
$$\delta J(f)=(J'(f),\delta f)+o\left(\vert \vert \delta f\vert \vert_{\frac{1}{a}}^2\right).$$
\end{proof}

In optimization, it is classical that the Lipschitz continuity of the Fréchet gradient $J'$ yields fast numerical results. This is proven in the following lemma.
\begin{lemma}
The Fréchet derivative $J'$ is Lipschitz continuous with Lipschitz constant $L=1$, i.e.,
    $$ \vert \vert J'(f + \delta f )-J'(f)\vert\vert_{\frac{1}{a}}\leqslant  \vert \vert \delta f\vert \vert_{\frac{1}{a}} \qquad \forall f,\, f+\delta f \in W.$$
\end{lemma}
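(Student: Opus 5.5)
The plan is to write $J'(f+\delta f)-J'(f)$ through the adjoint problem and then use contractivity twice, once forward and once backward.

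By \eqref{34} we have $J'(f+\delta f)-J'(f)=\psi(\cdot,0;f+\delta f)-\psi(\cdot,0;f)=:\delta\psi(\cdot,0)$. The adjoint problem \eqref{eq1to5} is linear, so $\delta\psi$ is the weak solution of the same backward degenerate problem,
\[
\delta\psi_t=-a(x)\,\delta\psi_{xx}\ \text{in } Q,\qquad \delta\psi=0 \ \text{on } \Sigma,\qquad \delta\psi(x,T)=\delta u(x,T;f).
\]
Here $\delta u$ solves \eqref{eq1to6}, and the noisy data $u^\delta_T$ cancels in the difference. The substitution $\tau=T-t$ turns $\delta\psi$ into a forward solution of \eqref{eq1to10} with initial datum $\delta u(\cdot,T)$. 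Both $\delta u$ and the time-reversed $\delta\psi$ are therefore generated by the semigroup of Theorem~\ref{thmg1}.

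The key step is contractivity in $L^2_{\frac1a}(0,1)$. Because $A$ is m-dissipative (Theorem~\ref{thmg1}), the semigroup it generates is a contraction semigroup, so $\|e^{tA}g\|_{\frac1a}\le\|g\|_{\frac1a}$ for all $t\ge0$. One can also see this directly from the computation in the logarithmic convexity lemma: for $w$ solving \eqref{eq1to10}, Green's formula gives
\[
\frac{d}{dt}\|w(\cdot,t)\|^2_{\frac1a}=2\int_0^1 w_{xx}\,w\,dx=-2\int_0^1 w_x^2\,dx\le 0 ,
\]
so $t\mapsto\|w(\cdot,t)\|_{\frac1a}$ is nonincreasing.

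Applying this to $\delta u$ gives $\|\delta u(\cdot,T)\|_{\frac1a}\le\|\delta f\|_{\frac1a}$. Applying it to the time-reversed $\delta\psi$ gives $\|\delta\psi(\cdot,0)\|_{\frac1a}\le\|\delta\psi(\cdot,T)\|_{\frac1a}=\|\delta u(\cdot,T)\|_{\frac1a}$. Chaining the two yields
\[
\|J'(f+\delta f)-J'(f)\|_{\frac1a}\le\|\delta f\|_{\frac1a},
\]
so $L=1$.

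The only delicate point is rigor at the weak-solution level. The energy identity needs $w\in H^2_{\frac1a}$, and the boundary terms at the degenerate endpoints must vanish. I would handle this by working with the semigroup directly, where contractivity follows from m-dissipativity (Lumer–Phillips) with no integration by parts. Alternatively, one can prove the inequality for data in $D(A)$, where the Green formula stated above applies, and extend it to all of $L^2_{\frac1a}(0,1)$ by density and the continuous dependence in Theorem~\ref{thmgenni}.
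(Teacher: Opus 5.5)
Your proposal is correct and follows essentially the same route as the paper: write the difference as $\delta\psi(\cdot,0)$, use the energy identity (equivalently, contractivity) once backward to get $\|\delta\psi(\cdot,0)\|_{\frac{1}{a}}\le\|\delta u(\cdot,T)\|_{\frac{1}{a}}$ and once forward to get $\|\delta u(\cdot,T)\|_{\frac{1}{a}}\le\|\delta f\|_{\frac{1}{a}}$, then chain the two. Your remark on justifying contractivity through m-dissipativity (Lumer--Phillips), or by density from data in $D(A)$, makes rigorous a point the paper's formal energy computation leaves implicit.
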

\begin{proof}
    By definition 
    $$\vert\vert J'(f+\delta f)-J'(f)\vert\vert_{\frac{1}{a}}=\vert\vert\delta \psi(\cdot,0;f)\vert\vert_{\frac{1}{a}},$$
    where
    $\delta\psi(x,t;f)= \psi(x,t;f+\delta f)-\psi(x,t;f)$ solves
\begin{equation}
    \left\{
\begin{aligned}
& \delta\psi_{t}(x,t) = -a(x) \delta\psi_{xx}(x,t),&&\quad \text { in } Q,\\
& \delta\psi(x,t)=0, &&\quad \text { on } \Sigma,\\
& \delta\psi(x,T)=\delta u(x,T;f), &&\quad \text { in } \Omega,
\label{eq1to7}
\end{aligned}
\right.
\end{equation}
and $\delta u(x,t;f)$ is the weak solution of the auxiliary problem
\begin{equation}
    \left\{
\begin{aligned}
& \delta u_{t}(x,t) = a(x) \delta u_{xx}(x,t),&&\quad \text { in } Q,\\
& \delta u(x,t)=0, &&\quad \text { on } \Sigma,\\
& \delta u(x,0)=\delta f(x), &&\quad \text { in } \Omega.
\label{eq1to8}
\end{aligned}
\right.
\end{equation}
We use the energy identity
\begin{equation*}
    \begin{aligned}
        \frac{1}{2} \frac{d}{dt}\int_{0}^1 \frac{1}{a(x)}\left(\delta \psi(x,t)\right)^2 dx&= \int_{0}^1 \frac{1}{a(x)}\delta \psi \delta \psi_{t} dx\\& =\int_{0}^1 \frac{1}{a(x)}\delta \psi a(x) \delta \psi_{xx} dx\\&= -\int_{0}^1 \delta \psi \delta \psi_{xx}\\&= \int_{0}^1 (\delta \psi_{x})^2 dx \geqslant 0.
    \end{aligned}
\end{equation*}
Integrating on $[0,T]$ and using $\delta \psi(x,T;f)=\delta u(x,T;f)$, we obtain
$$\int_{0}^1 \frac{1}{a(x)}\left(\delta \psi(x,0;f)\right)^2 dx \leqslant \int_{0}^1 \frac{1}{a(x)}\left(\delta \psi(x,T;f)\right)^2 dx=  \int_{0}^1 \frac{1}{a(x)}\left(\delta u(x,T;f)\right)^2 dx.$$
Next, we employ the following equality
\begin{equation*}
  \begin{aligned}
        \frac{1}{2} \frac{d}{dt}\int_{0}^1 \frac{1}{a(x)}\left(\delta u(x,t)\right)^2 dx&= \int_{0}^1 \frac{1}{a(x)}\delta u \delta u_{t} dx\\& =\int_{0}^1 \frac{1}{a(x)}\delta u a(x) \delta u_{xx} dx\\&= \int_{0}^1 \delta u\delta u_{xx}\\&=- \int_{0}^1 (\delta u_{x})^2 dx \leqslant 0.
  \end{aligned}  
\end{equation*}
Integrating on $[0,T]$, we obtain 
$$\int_{0}^1 \frac{1}{a(x)}\left(\delta u(x,T;f)\right)^2 dx \leqslant \int_{0}^1 \frac{1}{a(x)}\left(\delta u(x,0;f)\right)^2 dx= \int_{0}^1 \frac{1}{a(x)}\vert\delta f\vert^2 dx.$$
The above estimates yield
$$\vert\vert\delta \psi(\cdot,0;f)\vert\vert_{\frac{1}{a}}\leqslant  \vert\vert\delta f\vert\vert_{\frac{1}{a}}.$$
\end{proof}

We introduce the Landweber iteration given by
\begin{align}\label{44}
     f_{n+1}=f_{n}-\alpha_n J'(f_n), \; n=0,1,2, \dots
\end{align}
where $f_0$ is a given initial iteration in $W$ and $ \alpha_n$ is the iteration parameter defined as follows
$$ g_n(\alpha_n):=\inf_{\alpha \geq 0}g_n(\alpha), \; g_n(\alpha):=J(f_n-\alpha J'(f_n)),\, n=0,1,2 \dots$$
\begin{remark} Some remarks are in order:
    \begin{itemize}
        \item[(i)] We can use a similar proof of Lemma 4.3 and Corollary 4.1 in \cite{Hassan2007} to show the monotonicity and convergence of the sequence $ \left\lbrace J(f_n)\right\rbrace$ where $f_n$ is defined by equation \eqref{44}. 
        \item[(ii)] As mentioned in \cite{Hassan2007}, the choice of the parameter $\alpha_n$ can be challenging in many situations. However, when we have the Lipschitz continuity of the gradient $J'(f)$, we can estimate this parameter by the Lipschitz constant $L=1$ as follows:
        $$ 0 <\lambda_0 \leqslant \alpha_n \leqslant \frac{2}{1+2\lambda_1},$$
        where $\lambda_0, \lambda_1>0$ are arbitrary parameters.
\item[(iii)] If the step size parameter $\alpha_n=\alpha >0$ for all $n$, the optimal value of $\alpha$ is $\alpha_{*}=1.$
    \end{itemize}
\end{remark}
We can adopt the argument in Lemma 4.3 and  Corollary 4.1 of \cite{Hassan2007} to prove:
\begin{lemma}
    Let $(f_n)$ be the sequence defined by iteration \eqref{44}. If the step size parameter $\alpha_n=\alpha_* >0$ for all $n\in \mathbb N$, where $\alpha$ is a constant, we obtain
    \begin{itemize}
        \item[(i)]
        The sequence $(J(f_n))$ is monotone decreasing and convergent. Moreover,
        $$\lim\limits_{n \rightarrow \infty}\|J'(f_n)\|_{\frac{1}{a}}=0,$$
    and
    \begin{align*}
        \|f_{n+1}-f_n\|_{\frac{1}{a}}^2\leqslant 2\left[J(f_n)-J(f_{n+1})\right].
    \end{align*}
    \item[(ii)] For any initial data $f_0 \in W$, the sequence $f_n $ converges weakly in $L^2_{\frac{1}{a}}(0,1)$ to a quasi-solution $f_* \in W_*$. Moreover, for the rate of convergence of the sequence $ (J(f_n))$  can be estimated as follows :
    $$ 0 \leqslant J(f_n)- J(f_*) \leqslant \frac{2\lambda^2}{n} \qquad \forall n \in \mathbb{N},$$
    where $\lambda:=\sup \{\|f_1-f_2\|_{\frac{1}{a}}, \; f_1, f_2\in W\}.$
    \end{itemize}
\end{lemma}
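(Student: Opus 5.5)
The approach is the classical convex-optimization one for gradient descent with an $L$-Lipschitz gradient, specialized to $L=1$ and $\alpha_*=1$. Throughout, the key structural fact is that $J$ is a convex quadratic functional. Indeed, $f\mapsto u(\cdot,T;f)$ is linear and bounded with norm at most $1$ in $L^2_{\frac{1}{a}}(0,1)$, by the energy estimate in the proof of the Lipschitz lemma. Hence $J(f)=\frac12\|\Psi f-u_T^\delta\|_{\frac1a}^2$ is convex and continuous, and $J'$ is $1$-Lipschitz by the preceding lemma.

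\textbf{Part (i).} The first step is the descent lemma. From $J'$ being $1$-Lipschitz, the mean value formula $J(g)-J(f)=\int_0^1 (J'(f+\tau(g-f)),g-f)_{\frac1a}d\tau$ gives
\[
J(g)\le J(f)+(J'(f),g-f)_{\frac1a}+\tfrac12\|g-f\|_{\frac1a}^2 .
\]
Next, put $g=f_{n+1}=f_n-J'(f_n)$. This yields $J(f_{n+1})\le J(f_n)-\frac12\|J'(f_n)\|_{\frac1a}^2$, and since $\|f_{n+1}-f_n\|_{\frac1a}=\|J'(f_n)\|_{\frac1a}$, it is exactly the claimed inequality $\|f_{n+1}-f_n\|^2_{\frac1a}\le 2[J(f_n)-J(f_{n+1})]$. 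It follows that $(J(f_n))$ is decreasing and bounded below by $0$, hence convergent. Summing the inequality telescopically shows $\sum_n\|J'(f_n)\|^2_{\frac1a}\le 2J(f_0)<\infty$, so $\|J'(f_n)\|_{\frac1a}\to0$.

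\textbf{Part (ii), rate.} For the rate, let $f_*$ be a minimizer of $J$ over $W$. It exists because $W$ is bounded, closed and convex, hence weakly compact, and $J$ is convex and continuous, hence weakly lower semicontinuous. Set $\omega_n=J(f_n)-J(f_*)\ge0$. Convexity gives $\omega_n\le (J'(f_n),f_n-f_*)_{\frac1a}\le \|J'(f_n)\|_{\frac1a}\,\lambda$, provided the iterates remain in $W$. Combined with the descent step, this gives $\omega_n-\omega_{n+1}\ge\frac12\|J'(f_n)\|^2\ge \omega_n^2/(2\lambda^2)$. Dividing by $\omega_n\omega_{n+1}$ and using $\omega_{n+1}\le\omega_n$ yields $1/\omega_{n+1}-1/\omega_n\ge 1/(2\lambda^2)$. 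Summing then gives $\omega_n\le 2\lambda^2/n$.

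\textbf{Part (ii), weak convergence.} The iterates $f_n$ are bounded in $W$, so every subsequence has a weakly convergent sub-subsequence with limit $\bar f\in W$. By weak lower semicontinuity and $J(f_n)\to J(f_*)$, the limit $\bar f$ is a quasi-solution, i.e.\ $\bar f\in W_*$. To upgrade to convergence of the whole sequence, I would use Opial's argument. Because the gradient step with $\alpha\le 2/L$ is nonexpansive, $\|f_{n+1}-f\|_{\frac1a}\le\|f_n-f\|_{\frac1a}$ for every minimizer $f$. Together with Opial's lemma in the Hilbert space $L^2_{\frac1a}(0,1)$, this shows all weak cluster points coincide.

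\textbf{Main obstacle.} The delicate point is that the iteration is not projected onto $W$, so keeping $f_n\in W$ (needed for the $\lambda$ bound) is not automatic. I would handle this through the Fejér monotonicity $\|f_n-f_*\|\le\|f_0-f_*\|\le\lambda$. This gives $\|f_n-f_*\|_{\frac1a}\le\lambda$ for all $n$, which is exactly what the rate argument uses, and it does not require $f_n\in W$. Alternatively, one may interpret the scheme as projected onto $W$ and repeat the argument, following \cite{Hassan2007}.
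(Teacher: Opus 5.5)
Your part (i) and your $O(1/n)$ recursion are the standard descent-lemma argument that the paper simply imports from \cite{Hassan2007}. They are correct, with $\alpha_*=1$ and using that $J'(f)=\Psi(\Psi f-u_T^\delta)$ is affine and $1$-Lipschitz on all of $L^2_{\frac1a}(0,1)$.

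The gap is in (ii), exactly at the obstacle you flag. Nonexpansiveness of $Tf=f-J'(f)$ gives $\|f_{n+1}-f\|_{\frac1a}\le\|f_n-f\|_{\frac1a}$ only for fixed points of $T$, i.e.\ zeros of $J'$. Since $\Psi=e^{TA}$ is injective, these are exactly the solutions of $\Psi f=u_T^\delta$. A minimizer $f_*$ of $J$ over the ball $W$ only satisfies $J'(f_*)+\mu f_*=0$ for some multiplier $\mu\ge0$, so $Tf_*=(1+\mu)f_*$. Hence your bound $\|f_n-f_*\|_{\frac1a}\le\lambda$ is unjustified. So are $\omega_n\ge0$ and ``bounded in $W$'', which presuppose $f_n\in W$.

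This cannot be patched for the unprojected iteration \eqref{44}. $\Psi$ is self-adjoint with $0\le\Psi\le I$ and trivial kernel, and the residuals satisfy $\Psi f_{n+1}-u_T^\delta=(I-\Psi^2)(\Psi f_n-u_T^\delta)$, so $J(f_n)\to0$. Suppose $u_T^\delta\notin\Psi(W)$, which is the typical case for noisy data since $e^{TA}$ is smoothing. Then $J(f_*)>0$, so eventually $J(f_n)<J(f_*)$. Moreover, $(f_n)$ cannot converge weakly to any point of $W$, because a weak limit $\bar f$ would satisfy $\Psi\bar f=u_T^\delta$. So in that case (ii) is false for \eqref{44}, and no argument can close your gap.

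Your argument does prove the lemma when $u_T^\delta\in\Psi(W)$, e.g.\ exact data generated by some $f\in W$. Then $W_*=\{f_*\}$ with $\Psi f_*=u_T^\delta$, $J'(f_*)=0$ and $J(f_*)=0$. Fej\'er monotonicity gives $\|f_n-f_*\|_{\frac1a}\le\|f_0-f_*\|_{\frac1a}\le\lambda$, $\omega_n=J(f_n)\ge0$ is automatic, and your recursion yields $2\lambda^2/n$. Weak convergence follows because every weak cluster point has $J=0$, hence equals $f_*$.

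For noisy data you need the projected scheme $f_{n+1}=P_W(f_n-J'(f_n))$. There, ``repeat the argument'' is not literal: the identity $\|f_{n+1}-f_n\|_{\frac1a}=\|J'(f_n)\|_{\frac1a}$ is lost, so the recursion $\omega_n-\omega_{n+1}\ge\omega_n^2/(2\lambda^2)$ no longer closes. The right tool is the three-point inequality. It follows from the descent lemma, convexity, and the projection inequality $(f_n-J'(f_n)-f_{n+1},z-f_{n+1})_{\frac1a}\le0$:
\[
J(f_{n+1})-J(z)\le\tfrac12\bigl(\|f_n-z\|_{\frac1a}^2-\|f_{n+1}-z\|_{\frac1a}^2\bigr)\qquad\forall z\in W .
\]
Taking $z=f_n$ gives the displayed inequality in (i). Taking $z=f_*\in W_*$ gives genuine Fej\'er monotonicity and, after summing, $J(f_n)-J(f_*)\le\lambda^2/(2n)$. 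Opial's lemma then gives weak convergence to a point of $W_*$.

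In this scheme, however, $\|J'(f_n)\|_{\frac1a}\to0$ must be replaced by $\|f_{n+1}-f_n\|_{\frac1a}\to0$. Since $J'$ is affine and bounded, $J'(f_n)\rightharpoonup J'(f_*)$, which is nonzero when $u_T^\delta\notin\Psi(W)$. So unless the data are attainable in $W$, (i) and (ii) as stated refer to two different iterations. Your write-up has to say which scheme each claim is proved for.
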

\subsection{Existence and uniqueness}
In this section, we will study the existence and uniqueness of the quasi-solution using the calculus of variations.
\begin{lemma}\label{lemmaexi}
    For $J\in C^1(W)$, the following equality holds
    $$ \langle J'(f+\delta f)-J'(f), \delta f \rangle_{\frac{1}{a}}=\|\delta u(\cdot,T,f)\|^2_{\frac{1}{a}} \qquad \forall f,\, f+\delta f \in W.$$
\end{lemma}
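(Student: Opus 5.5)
We will exploit the gradient formula \eqref{34} together with linearity of both the direct problem \eqref{eq1to10} and the adjoint problem \eqref{eq1to5}. As in the proof of the Lipschitz continuity of $J'$, we have
$$J'(f+\delta f)-J'(f)=\delta\psi(\cdot,0;f),$$
where $\delta\psi$ solves the backward problem \eqref{eq1to7} with final datum $\delta\psi(\cdot,T)=\delta u(\cdot,T;f)$. Here $\delta u$ solves \eqref{eq1to8} with $\delta u(\cdot,0)=\delta f$. The data $u^{\delta}_T$ cancel in the difference, so only the linear maps $\delta f\mapsto\delta u\mapsto\delta\psi$ enter. The claim therefore reduces to the duality identity
$$\langle \delta\psi(\cdot,0),\delta u(\cdot,0)\rangle_{\frac{1}{a}}=\langle \delta\psi(\cdot,T),\delta u(\cdot,T)\rangle_{\frac{1}{a}}=\|\delta u(\cdot,T;f)\|_{\frac1a}^2 .$$

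To prove this identity, we will differentiate $t\mapsto \int_0^1 \frac{1}{a}\,\delta\psi\,\delta u\,dx$. Using the equations $\delta u_t=a\,\delta u_{xx}$ and $\delta\psi_t=-a\,\delta\psi_{xx}$, the weight $\frac1a$ cancels the degenerate coefficient, and the derivative equals
$$\int_0^1\bigl(-\delta\psi_{xx}\,\delta u+\delta\psi\,\delta u_{xx}\bigr)\,dx .$$
By the Green formula of the first Lemma of Section~\ref{sec2}, applied twice (once with $(\delta\psi,\delta u)$ and once with $(\delta u,\delta\psi)$), this equals
$$\int_0^1\delta\psi_x\,\delta u_x\,dx-\int_0^1\delta u_x\,\delta\psi_x\,dx=0 .$$
Hence the pairing is constant in time. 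Evaluating it at $t=0$ and $t=T$ gives exactly the identity above, which is the same computation already carried out in the proof of \eqref{34}.

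The main obstacle is justifying these manipulations at the level of weak solutions. The Green formula requires $\delta u(\cdot,t)$ and $\delta\psi(\cdot,t)$ to lie in $H^2_{\frac1a}(0,1)$, and the time differentiation requires $H^1$-regularity in time. We will handle this by density. For $\delta f\in D(A)$, the analytic semigroup of Theorem~\ref{thmg1} gives strong solutions for which every step is rigorous. Both sides of the identity are continuous in $\delta f$ for the $L^2_{\frac1a}$ topology, by the contraction estimates established in the proof of the Lipschitz Lemma. We will then extend to all $\delta f$ by density of $D(A)$ in $L^2_{\frac1a}(0,1)$.

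An alternative route avoids the density step. Since $A$ is self-adjoint by Theorem~\ref{thmg1}, we have $\delta u(\cdot,T)=S(T)\delta f$ and $\delta\psi(\cdot,0)=S(T)\delta u(\cdot,T)$, where $(S(t))_{t\ge 0}$ is the semigroup generated by $A$. Then
$$\langle S(T)^2\delta f,\delta f\rangle_{\frac1a}=\langle S(T)\delta f,S(T)\delta f\rangle_{\frac1a}=\|\delta u(\cdot,T;f)\|_{\frac1a}^2 ,$$
which proves the claim directly.
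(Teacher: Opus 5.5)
Your proposal is correct and follows essentially the same route as the paper. The paper also reduces the claim to $\langle\delta\psi(\cdot,0),\delta u(\cdot,0)\rangle_{\frac{1}{a}}=\langle\delta\psi(\cdot,T),\delta u(\cdot,T)\rangle_{\frac{1}{a}}$ by writing the difference as a time integral of $\frac{d}{dt}\int_0^1\frac{1}{a}\,\delta\psi\,\delta u\,dx$, which vanishes after substituting the two equations and integrating by parts in $x$; your density step through strong solutions for $\delta f\in D(A)$ and your semigroup identity $\langle S(T)^2\delta f,\delta f\rangle_{\frac{1}{a}}=\|S(T)\delta f\|_{\frac{1}{a}}^2$ (from self-adjointness) are sound and supply the rigorous justification the paper leaves implicit.
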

\begin{proof}
    Let $\delta \psi$ be the solution of \eqref{eq1to7}. Using gradient formula \eqref{34}, we obtain
    \begin{equation*}
        \begin{aligned}
    &\langle J'(f+\delta f)-J'(f), \delta f\rangle_{\frac{1}{a}}=\int_{0}^1 \frac{1}{a(x)}\delta \psi(\cdot,0)\delta u(\cdot,0) dx\\&=\int_{ 0}^1 \left\lbrace \int_{0}^T \frac{1}{a(x)}\left( \delta\psi(x,t)\delta u(x,t)\right)_{t} dt\right\rbrace dx + \int_{ 0}^1\frac{1}{a(x)}\delta\psi(x,T)\delta u(x,T) dx\\ &= \int_{ 0}^1 \int_{0}^T\frac{1}{a(x)}\left( \delta\psi_{t}\delta u + \delta\psi \delta u_{t}\right) dt dx + \int_{ 0}^1\frac{1}{a(x)}\delta\psi(x,T)\delta u(x,T) dx \\ & = \int_{ 0}^1 \int_{0}^T\frac{1}{a(x)}\left( -a(x)\delta\psi_{xx}\delta u + \delta\psi a(x)\delta u_{xx}\right) dt dx + \int_{ 0}^1\frac{1}{a(x)}\delta\psi(x,T)\delta u(x,T) dx \\&= \int_{0}^T\left( -\delta\psi_{xx}\delta u + \delta\psi\delta u_{xx}\right) dt dx + \int_{ 0}^1\frac{1}{a(x)}\delta u(x,T)\delta u(x,T) dx \\& =  \int_{ 0}^1\frac{1}{a(x)}\delta u(x,T)^2 dx .
        \end{aligned}
    \end{equation*}
\end{proof}
\begin{remark}
    The monotonicity of the derivative $J'$ in Lemma \ref{lemmaexi} implies the convexity of the functional $J$. Moreover, using Theorem 25.~C in \cite{ZEI2004}, we obtain the following result.
\end{remark}
\begin{lemma}
   The functional $J$ is continuous and convex on the closed convex set $W$. Then $J$ has at least a minimum $f_*$, i.e.,
    $$ J(f_*)= \min_{f \in W} J(f).$$
\end{lemma}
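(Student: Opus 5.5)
The plan is to apply the direct method of the calculus of variations on the weakly compact set $W$, or equivalently to invoke Theorem 25.C in \cite{ZEI2004}. That result says a weakly lower semicontinuous functional on a nonempty, bounded, closed, convex subset of a reflexive Banach space attains its minimum. The ambient space $L^2_{\frac{1}{a}}(0,1)$ is a Hilbert space, hence reflexive. As observed after its definition, $W$ is a closed ball there, so it is nonempty, bounded, closed and convex, and therefore weakly sequentially compact. The task thus reduces to showing that $J$ is continuous and convex on $W$, since a continuous convex functional is weakly lower semicontinuous by Mazur's lemma.

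For continuity, I will use linearity of the direct problem: $u(\cdot,T;f+\delta f)-u(\cdot,T;f)=\delta u(\cdot,T;f)$, where $\delta u$ solves \eqref{eq1to6}. The energy estimate from the proof of the Lipschitz lemma, or Theorem \ref{thmgenni} with $G=0$, gives $\|\delta u(\cdot,T)\|_{\frac{1}{a}}\le \|\delta f\|_{\frac{1}{a}}$, so $\Psi$ is a bounded linear operator of norm at most $1$. Then $J(f)=\frac12\|\Psi f-u_T^\delta\|_{\frac{1}{a}}^2$ is the composition of an affine bounded map with the continuous function $\frac12\|\cdot\|^2$, which gives continuity. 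The Fréchet differentiability already established yields the same conclusion.

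For convexity, I would argue directly. $J$ is the composition of the affine map $f\mapsto \Psi f-u_T^\delta$ with the convex function $\frac12\|\cdot\|_{\frac{1}{a}}^2$, so it is convex. Alternatively, Lemma \ref{lemmaexi} gives $\langle J'(f+\delta f)-J'(f),\delta f\rangle_{\frac{1}{a}}=\|\delta u(\cdot,T)\|^2_{\frac{1}{a}}\ge 0$, so $J'$ is monotone on the convex set $W$, and a $C^1$ functional with monotone derivative is convex. This is the content of the preceding remark.

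Finally, I take a minimizing sequence $(f_n)\subset W$ with $J(f_n)\to\inf_W J\ge 0$. A subsequence converges weakly to some $f_*$, which lies in $W$ because $W$ is weakly closed, being closed and convex. Weak lower semicontinuity then gives $J(f_*)\le\liminf J(f_n)=\inf_W J$, so $J(f_*)=\min_{f\in W}J(f)$. I expect no real obstacle here. The only point needing care is to pass from strong continuity to weak lower semicontinuity through convexity, since $J$ is not weakly continuous in general, and to note that $W$ being a norm ball makes weak compactness immediate. Uniqueness is not claimed, which is consistent with $J$ being only convex and not strictly convex.
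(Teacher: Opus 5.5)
Your proposal is correct and follows the paper's route. The paper derives convexity from the monotonicity of $J'$ in Lemma \ref{lemmaexi} and then invokes Theorem 25.C of \cite{ZEI2004} on the closed, bounded, convex set $W$ in the reflexive space $L^2_{\frac{1}{a}}(0,1)$. You make explicit what the paper leaves implicit: continuity via $\|\Psi f\|_{\frac{1}{a}}\le\|f\|_{\frac{1}{a}}$, a more elementary affine-composition proof of convexity, and the direct-method argument behind the cited theorem.
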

As a direct consequence of Corollary 25.15 in \cite{ZEI2004}, we have the following sufficient condition for uniqueness.
\begin{lemma}
    If we have 
    $$\|\delta u(\cdot,T,f)\|^2_{\frac{1}{a}}>0\qquad \forall f \in W_{0},$$
    where $W_0 \subset W$ is a closed convex subset. Then $J$ has at most one minimum.
\end{lemma}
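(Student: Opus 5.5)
The plan is to obtain uniqueness from strict convexity of $J$ on $W_0$. Strict convexity in turn follows from strict monotonicity of $J'$, which is exactly what Lemma \ref{lemmaexi} supplies together with the hypothesis. Here $\delta u(\cdot,T,f)$ is understood as the final-time value of the solution of \eqref{eq1to8} with initial datum $\delta f = g-f$, for any two distinct admissible data $f,g\in W_0$. So the hypothesis says $\|\delta u(\cdot,T)\|_{\frac{1}{a}}>0$ whenever $\delta f\neq 0$.

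\textbf{Step 1: $J'$ is strictly monotone on $W_0$.} Let $f,g\in W_0$ with $f\neq g$ and set $\delta f=g-f$. Lemma \ref{lemmaexi} gives
$$\langle J'(g)-J'(f), g-f\rangle_{\frac{1}{a}}=\|\delta u(\cdot,T,f)\|^2_{\frac{1}{a}}>0 .$$

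\textbf{Step 2: $J$ is strictly convex on $W_0$.} Since $W_0$ is convex, the segment $f_\tau=f+\tau(g-f)$, $\tau\in[0,1]$, stays in $W_0$. The map $h(\tau)=J(f_\tau)$ is $C^1$ with $h'(\tau)=\langle J'(f_\tau),g-f\rangle_{\frac{1}{a}}$, because $J\in C^1(W)$ by the Fréchet differentiability lemma. For $\tau_1<\tau_2$, Step 1 applied to the pair $f_{\tau_1},f_{\tau_2}$ gives
$$(\tau_2-\tau_1)\bigl(h'(\tau_2)-h'(\tau_1)\bigr)>0 .$$
Hence $h'$ is strictly increasing and $h$ is strictly convex on $[0,1]$. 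This holds for every pair of distinct points, so $J$ is strictly convex on $W_0$. This is precisely the setting of Corollary 25.15 in \cite{ZEI2004}: a strictly monotone gradient implies a strictly convex functional.

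\textbf{Step 3: uniqueness.} Suppose $f_1\neq f_2$ are both minimizers with $J(f_1)=J(f_2)=m$. Strict convexity gives
$$J\Bigl(\tfrac{f_1+f_2}{2}\Bigr)<m .$$
The midpoint lies in $W_0$ by convexity, which contradicts minimality, so $J$ has at most one minimum.

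The only delicate point is Step 2: making sure the strict inequality survives passage from the monotonicity identity to convexity along segments. The $C^1$ regularity of $h$, inherited from the Fréchet differentiability lemma, makes this a one-variable argument.

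A remark worth noting is that the hypothesis is in fact automatic. By the logarithmic convexity estimate \eqref{convx} applied to $\delta u$, $\delta u(\cdot,T)=0$ forces $\delta u\equiv 0$ on $[0,T]$, hence $\delta f=0$. So $\Psi$ is injective.
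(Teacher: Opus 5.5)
Your argument is correct and follows the paper's route. The paper obtains the lemma directly from Corollary 25.15 of \cite{ZEI2004}, applied to the strict monotonicity of $J'$ that Lemma \ref{lemmaexi} and the hypothesis provide; your Steps 2--3 simply unpack that corollary (strictly monotone derivative $\Rightarrow$ strict convexity along segments $\Rightarrow$ at most one minimizer on the convex set $W_0$).

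Your closing remark is also true: since $\Psi$ is linear, the hypothesis amounts to injectivity of $\Psi$, which holds, so one may take $W_0=W$. However, \eqref{convx} as proved in the paper does not by itself give this, because that proof divides by $\|u(\cdot,t)\|_{\frac{1}{a}}$. You need the standard first-vanishing-time version of the log-convexity argument, or injectivity of the self-adjoint semigroup $e^{TA}$. You then also need continuity at $t=0$ to conclude $\delta f=0$.
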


\section{Numerical identification for Degenerate viscous Hamilton-Jacobi equation}\label{sec5}
In this section, we develop some numerical algorithms to reconstruct the unknown initial data from the measured final state.
First, we use the Conjugate Gradient (CG) method for the linear equation, while for the nonlinear viscous Hamilton-Jacobi equation, we employ Van Cittert's iteration, which turned out to be more suitable for handling the nonlinearity.

\subsection{Linear degenerate parabolic equation}\label{s1}
We consider the following degenerate parabolic equation 
\begin{equation}
    \left\{
\begin{aligned}\label{1}
& u_{t}(x,t) -a(x) u_{xx}(x,t)=0, &&\quad \text { in } Q\\
& u(x,t)=0, &&\quad \text{ on } \Sigma,\\
&u(x,0)=f(x), &&\quad \text { in } \Omega.
\end{aligned}
\right.
\end{equation}

\subsubsection{\textbf{Discretization of the problem}}
We approximate the solution of the equation \eqref{1} using the finite difference method. We discretize the space-time domain $[0,1]\times [0,T]$ into a uniform grid $x_{j}=jdx $ for $j=0,\cdots, N_x+1$, with a spatial step size $dx=\frac{1}{N_x+1},$ and $t_n=ndt$ for $n=0,\cdots , N_t+1,$ with a time step size $dt=\frac{T}{N_t+1}$. We denote by $ u^n_{j}$ the approximate solution at node $x_{j}$ at time $t_n,$ and  $ a_{j}=a(x_{j}) $. The first-order and second-order derivatives of $u $ in \eqref{1} are approximated  by:
$$ \partial_t u (x_j,t_n) \approx  \frac{u_{j}^{n+1}-u_{j}^{n}}{dt} $$
and 
$$\partial_{xx}u (x_j,t_n) \approx \frac{1}{2}\left(\frac{u_{j+1}^{n+1}-2u_j^{n+1}+u_{j-1}^{n+1}}{dx^2}+\frac{u_{j+1}^{n}-2u_j^{n}+u_{j-1}^{n}}{dx^2}\right). $$
Then we consider the Crank–Nicolson discretization to obtain:
$$ \frac{u_{j}^{n+1}-u_{j}^{n}}{dt} = \frac{a_{j}}{2}\left(\frac{u_{j+1}^{n+1}-2u_j^{n+1}+u_{j-1}^{n+1}}{dx^2}+\frac{u_{j+1}^{n}-2u_j^{n}+u_{j-1}^{n}}{dx^2}\right).$$
This numerical scheme can be written as follows
$$-\gamma_ju_{j-1}^{n+1}+(1+2\gamma_j)u_j^{n+1}-\gamma_ju_{j+1}^{n+1}=\gamma_ju_{j-1}^n+ (1-2\gamma_j)u_j^n+\gamma_ju^n_{j+1},$$
where $\gamma_j=\dfrac{a_j dt}{2dx^2}.$ Then, we obtain the following algebraic equation
$$ Au^{n+1}=Bu^{n}, \quad n=0,\cdots, N_t,$$
where $u^n=(u^n_1, u_2^n,\cdots, u_{N_x}^n)^t$, and
$$A=\begin{bmatrix}
1 + 2\gamma_1 & -\gamma_1 & 0 & \cdots & 0 \\
-\gamma_2 & 1 + 2\gamma_2& -\gamma_2 & \ddots & \vdots \\
0 & -\gamma_3 & 1 + 2\gamma_3 & \ddots & 0 \\
\vdots & \ddots & \ddots & \ddots & -\gamma_{N-1} \\
0 & \cdots & 0 & -\gamma_{N} & 1 + 2\gamma_N
\end{bmatrix},
$$     
$$B=\begin{bmatrix}
1 - 2\gamma_1 & \gamma_1 & 0 & \cdots & 0 \\
\gamma_2 & 1 - 2\gamma_2& \gamma_2 & \ddots & \vdots \\
0 & \gamma_3 & 1 - 2\gamma_3 & \ddots & 0 \\
\vdots & \ddots & \ddots & \ddots & \gamma_{N-1} \\
0 & \cdots & 0 & \gamma_{N} & 1 - 2\gamma_N
\end{bmatrix}.
$$    
\subsubsection{\textbf{Numerical experiments and results}}
In this section, we introduce the Conjugate Gradient (CG) method for the numerical reconstruction of initial data.
 
We implement the gradient formula of $J_{\epsilon}$ (see \eqref{34}) for the backward problem in the following CG algorithm.
\smallskip

\begin{algorithm}[H]
        \caption{Conjugate Gradient Algorithm}\label{alg}
        \begin{algorithmic}[1]
            \State Set $n=0$ and choose an initial guess $f_0$;
            \State Solve the direct problem \eqref{1} to obtain $u(t,x;f_0);$
            \State Knowing the computed $ u(T,x;f_0)$ and the measured $u^{\delta}_T,$ solve the adjoint problem  \eqref{eq1to5} to obtain $\psi(t,x;f_0);$
            \State Compute the initial descent direction $p_0=J_{\epsilon}'(f_0)$ using \eqref{34};
            \State Solve the direct problem \eqref{1} with initial datum $p_n$ to obtain the solution of $\Psi(p_n);$
            \State Compute the relaxation parameter $ \alpha_n =\frac{\|J_{\epsilon}'(f_n)\|^2_{\frac{1}{a}}}{\|\Psi p_n\|^2_{\frac{1}{a}}+\epsilon\| p_n\|^2_{\frac{1}{a}}};$
            \State Update the iteration $f_{n+1}=f_n -\alpha_n p_n ;$
            \State Stop the iteration process if the stopping criterion $J_{\epsilon}(f_{n+1})< \epsilon'+\rho \mathcal{E}$ holds,  where $\rho>1,$ $\mathcal{E}=\frac{1}{2} \|u_{\text{exact}}-u^\delta\|_{\frac{1}{a}}^2$ is the magnitude of added noise, and $\epsilon'$ is the tolerance imposed if there's no noise.\\
            Otherwise, set $n=n+1,$ compute
$$ \gamma_n=\frac{\|J_{\epsilon}'(f_n)\|^2_{\frac{1}{a}}}{\|J_{\epsilon}'(f_{n-1})\|^2_{\frac{1}{a}}}\;\; \text{and}\;\; p_n=J_{\epsilon}'(f_n)+\gamma_n p_{n-1},$$
and go to step $5$;
\end{algorithmic}
\end{algorithm}
\smallskip
As we know, measurements always contain some level of random noise. In all the following numerical experiments, we perturb the exact data as follows
\begin{equation}\label{ndata}
    u^{\delta}=u_{\text{exact}}+p\times\beta\times \max_{[0,1]}\vert u_{\text{exact}}\vert \times \texttt{random.rand()},
\end{equation}
instead of $u_{\text{exact}}$ to generate noisy data, where $p$ stands for the percentage of the noise level, $\beta \in L^2_{\frac{1}{a}}(0,1)$ and the function \texttt{random.rand()} produces random real numbers  uniformly distributed in the interval $[0,1)$. In all the numerical examples, we take $T=0.03$, $N_x=N_t=100$, $\epsilon=10^{-6}$ and $\rho=1.1.$

\begin{remark}
All the integrals involved in the algorithms are computed numerically using the well-known \textit{Gauss-Kronrod} quadrature. The direct and adjoint problems are solved using the finite difference method, and the stopping criteria are selected based on the discrepancy principle \cite{Alifanov2012,Ozisik2000}.
\end{remark}

\begin{example}\label{example1}
We take 
$$f(x)= \sqrt{x(1-x)}, \quad x\in (0,1), \; a(x)=\sin(\pi x), \quad \beta(x)=\sqrt{x(1-x)}.$$
We initialize the iteration with $f_0=0$ and fix the tolerance $\epsilon'=10^{-7}$. The algorithm terminates after $n\in \left\lbrace 101,6,5,2\right\rbrace$ iterations for different noise levels $p\in \left\lbrace0\%, 1\%, 3\%, 5\%\right\rbrace $ (see Figure \ref{fig11}), respectively. The above stopping iteration numbers are obtained based on the discrepancy principle. Moreover, Figure \ref{fig2} compares the exact function with the recovered solution at each iteration for a noise level of $p = 1\%$. The recovered function progressively approaches the exact one.

\begin{figure}[H]
\centering
\begin{subfigure}[t]{0.48\textwidth}
    \centering
    \includegraphics[width=\textwidth]{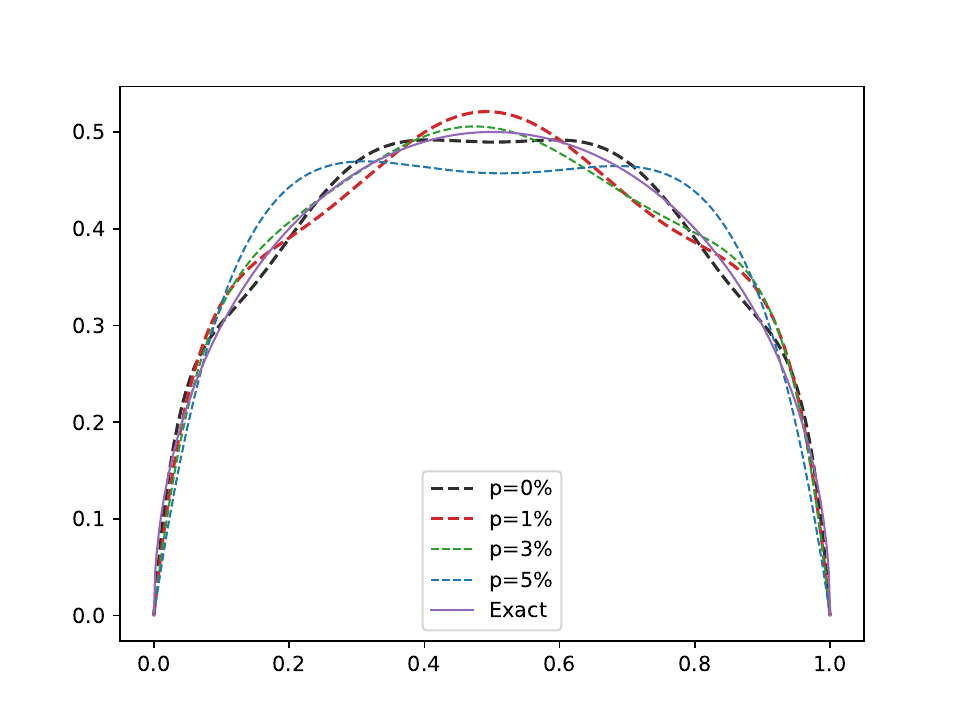}
     \caption{Exact and recovered initial data with\\ $p\in \left\lbrace 0\%,1 \%, 3\%, 5\%\right\rbrace $ and $\epsilon'=10^{-7}$}%
    \label{fig11}%
\end{subfigure}
\begin{subfigure}[t]{0.48\textwidth}
    \centering
    {{\includegraphics[width=\textwidth]{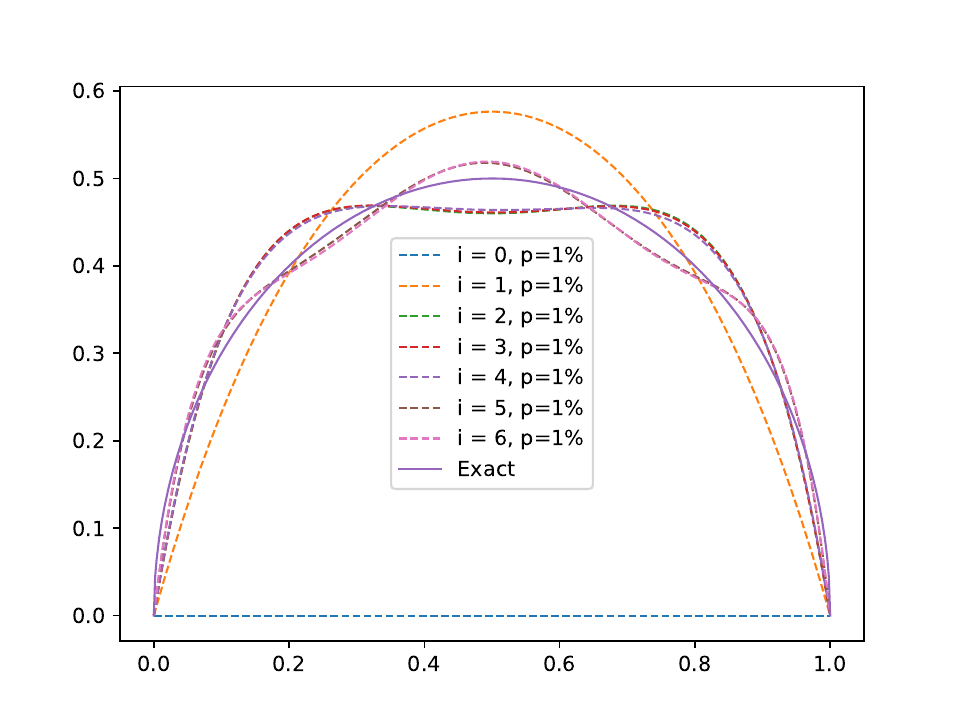}}}%
    \caption{Iterative recovery of the exact function with noise level $p=1\%$}%
    \label{fig2}%
\end{subfigure}
\caption{Recovered initial data: (a) for multiple noise levels, (b) iterative recovery for $p=1\%$}
\end{figure}
\vspace{-0.5em}
Table~\ref{tab:error_noise} presents the error values and the corresponding number of iterations for different noise levels. In the noise-free case (0\%), the algorithm achieves a low error of \(1.72 \times 10^{-3}\) after 101 iterations. When 1\% noise is introduced, the algorithm reaches an error of \(3.11 \times 10^{-3}\) and terminates after 6 iterations. As the noise level increases to $3\%$ and $5\%$, the error also increases to \(6.51 \times 10^{-3}\) and \(6.54 \times 10^{-3}\), respectively. While the number of iterations remains the same for $3\%$ noise, it drops to 2 for $5\%$, likely due to early stopping based on the discrepancy principle.

\begin{table}[h!]
\centering
\begin{tabular}{|c|c|c|}
\hline
\textbf{Noise Level (\%)} & \textbf{Error} & \textbf{Number of Iterations} \\
\hline
0 & \(1.72 \times 10^{-3} \) & 101 \\
1.0 & \(3.11 \times 10^{-3} \) & 6 \\
3.0 & \(6.51 \times 10^{-3} \) & 5 \\
5.0 & \(6.54 \times 10^{-3} \) & 2 \\
\hline
\end{tabular}
\caption{Error values and number of iterations at different noise levels.}
\label{tab:error_noise}
\end{table}

Figure \ref{fig:objective_decrease} illustrates the evolution of the Tikhonov functional $J_{\epsilon}(f_n)$ during the Conjugate Gradient iterations. We see that the values of the functional decrease rapidly, indicating convergence. The use of a logarithmic scale highlights the sharp drop after the first iteration and the stabilization in subsequent steps.
\begin{figure}[H]
    \centering
    \includegraphics[scale=0.4]{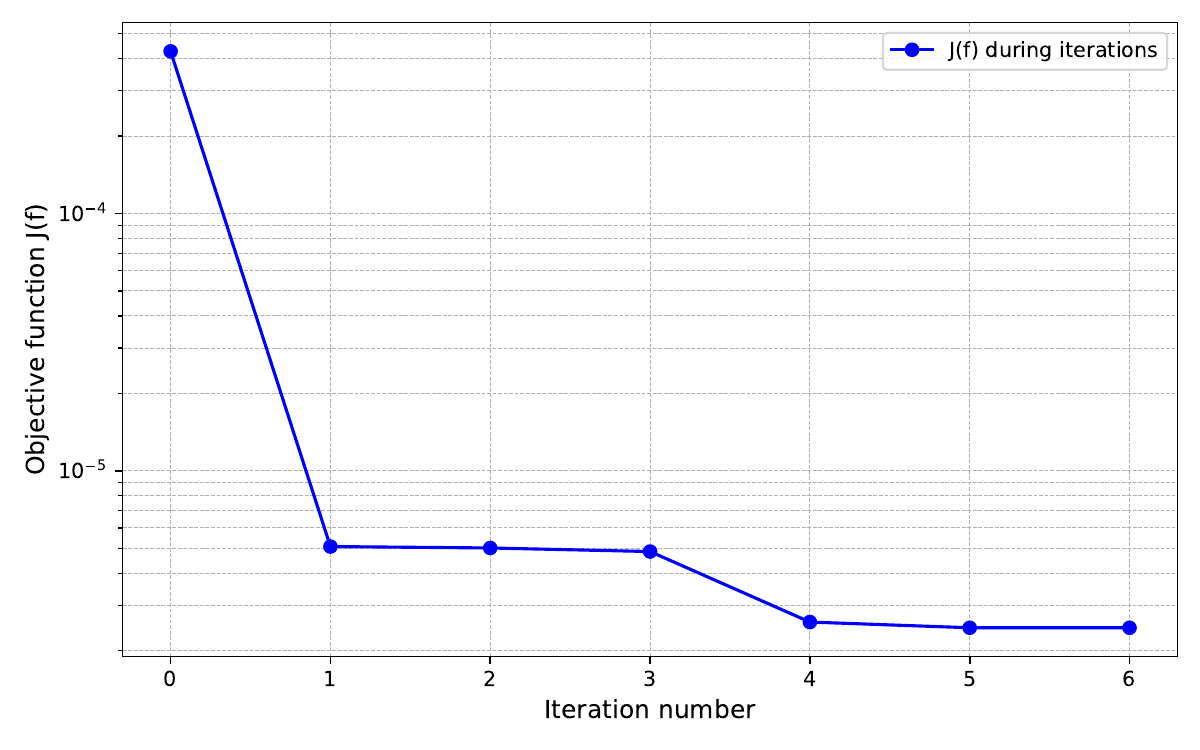}
    \caption{Decrease of the functional during the Conjugate Gradient process}
    \label{fig:objective_decrease}
\end{figure}
\end{example}

\begin{example}
We consider an exact initial datum that is piecewise discontinuous in $(0,1)$ and given by
$$f(x) =
\begin{cases}
1 & \quad\text{if } x\in [0.2,0.8], \\
0 & \quad\text{otherwise}, 
\end{cases}$$
and we take $$a(x)=x^4(1-x)^4, \qquad \beta(x)=(1-\cos(x))^2(e^{1-x}-1)^2.$$
We initialize the iteration with $f_0=0$ and fix the tolerance $\epsilon'=10^{-7}$. As in Example \ref{example1}, the algorithm terminates after $n=5$ iterations for all different noise levels $p\in \left\lbrace 0\%,1\%, 3\%, 5\%\right\rbrace $ (see Figure \ref{fig1}). Additionally, Figure \ref{fig111} displays the evolution of the recovered function with a noise level of $p = 1\%$. The solution becomes increasingly close to the exact function as the iterations progress.

\begin{figure}[h]
\centering
\begin{subfigure}[t]{0.48\textwidth}
    \centering
    {{\includegraphics[width=\textwidth]{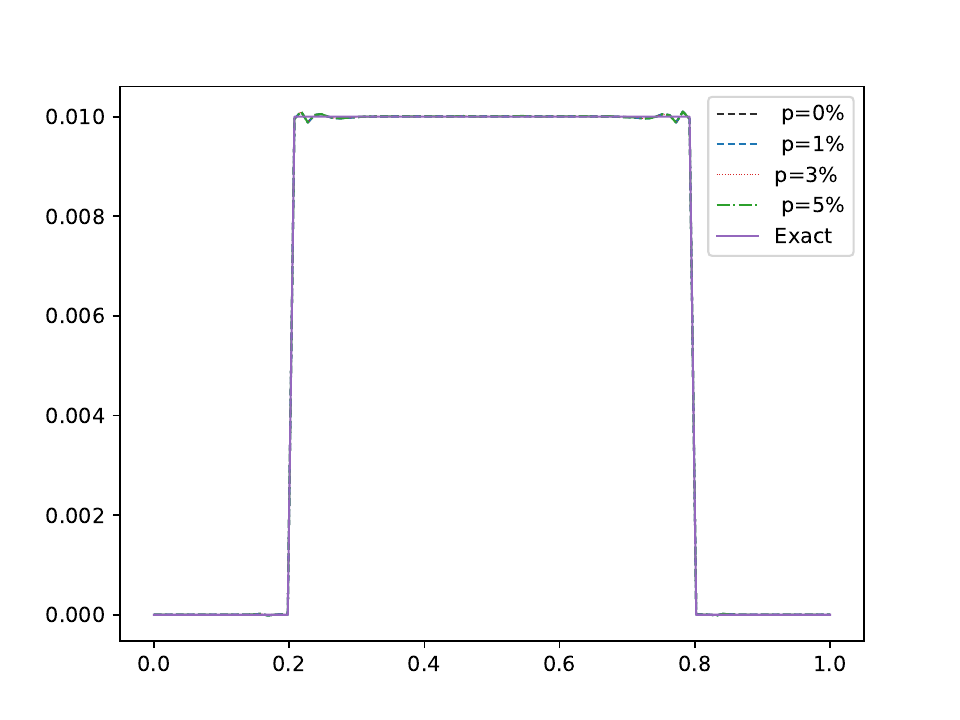} }}%
    \caption{Exact and recovered initial data with \\$p\in \left\lbrace 0\%, 1\%, 3\%, 5\%\right\rbrace $ and $\epsilon'=10^{-7}$}%
    \label{fig1}%
\end{subfigure}
\begin{subfigure}[t]{0.48\textwidth}
     \centering
    {{\includegraphics[width=\textwidth]{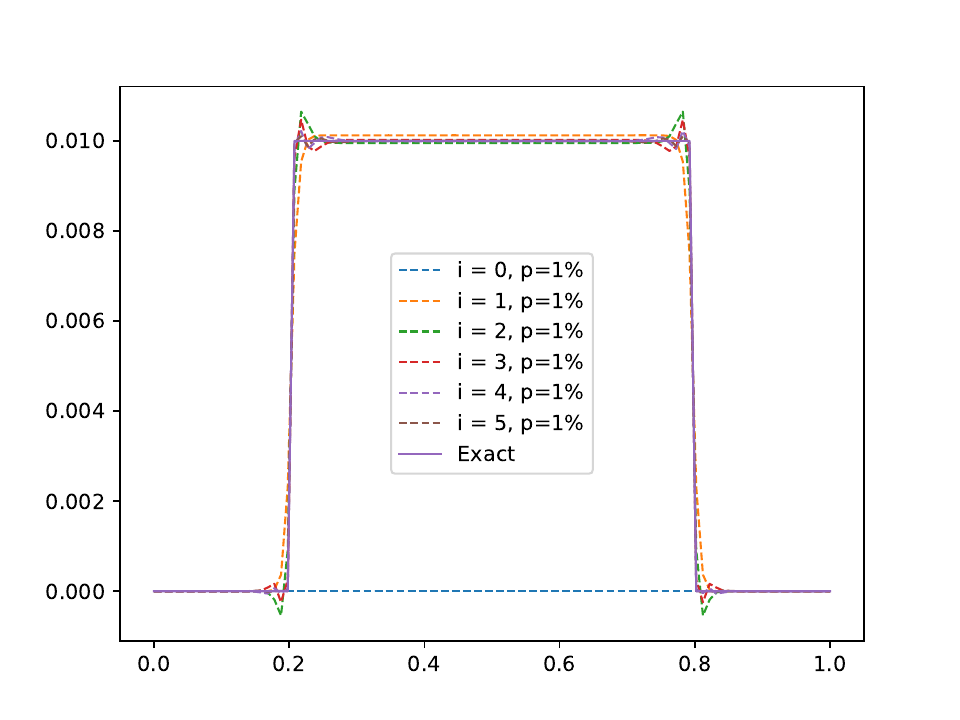} }}%
    \caption{Iterative recovery of the exact function with noise level $p=1\%$}%
    \label{fig111}%
\end{subfigure}
\caption{Recovered initial data: (a) for multiple noise levels, (b) iterative recovery for $p=1\%$}
\end{figure}
\vspace{-0.5em}
Table~\ref{tab:error_noise3} presents the error values and the corresponding number of iterations for different noise levels under discrepancy-based stopping. In all cases, from 0\% to 5\% noise, the number of iterations remains constant at 5, indicating that the stopping criterion is consistently met at the same point. The error values show only a slight increase as the noise level rises: from \(7.04 \times 10^{-7}\) in the noise-free case to \(7.26 \times 10^{-7}\) at 5\% noise. This stability suggests that the method is robust with respect to moderate noise when the discrepancy principle is used for stopping.

\begin{table}[h!]
\centering
\begin{tabular}{|c|c|c|}
\hline
\textbf{Noise Level (\%)} & \textbf{Error} & \textbf{Number of Iterations} \\
\hline
0.0 & \(7.04 \times 10^{-7}\) & 5 \\
1.0 & \(7.05 \times 10^{-7}\) & 5 \\
3.0 & \(7.11 \times 10^{-7}\) & 5 \\
5.0 & \(7.26 \times 10^{-7}\) & 5 \\
\hline
\end{tabular}
\caption{Error values and number of iterations for different noise levels (discrepancy-based stopping).}
\label{tab:error_noise3}
\end{table}

As in Example 1, the following figure displays the evolution of the recovered function with a noise level of $p = 1\%$. The solution becomes increasingly close to the exact function as the iterations progress.
Figure~\ref{fig:objective_decrease3} shows the evolution of the functional \( J_{\epsilon}(f_n) \) over the Conjugate Gradient iterations. The functional value decreases consistently from iteration~0 to~5, reflecting effective convergence. The logarithmic scale emphasizes the rapid drop at the beginning and the smoother decline in later steps.
\\
\begin{figure}[H]
    \centering
    \includegraphics[scale=0.4]{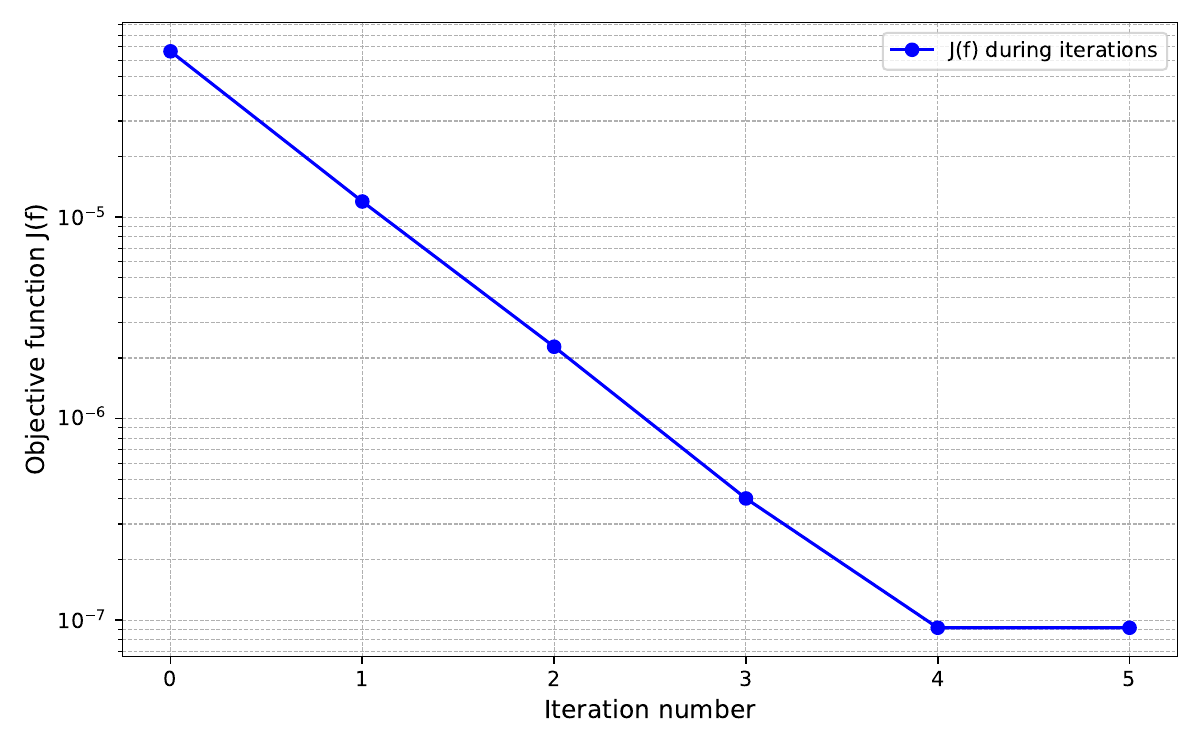}
    \caption{Decrease of the functional during the Conjugate Gradient process}
    \label{fig:objective_decrease3}
\end{figure}

\end{example}
\subsection{Degenerate viscous Hamilton–Jacobi equation}

Now we aim to determine the initial datum $f(x)$ of the following degenerate viscous Hamilton-Jacobi equation
\begin{equation}
    \left\{
\begin{aligned}
& u_{t} - a(x) u_{xx}+ \dfrac{b}{q}\vert u_{x}\vert^q + du_{x} +c u= 0,&&\quad \text{ in } Q,\\
&u(x,t)=0,&& \quad \text{ in }\Sigma,\\
&u(x,0)=f(x), &&\quad \text{ in } \Omega,
\end{aligned}\label{HJ}
\right.
\end{equation}
where $b, c, d\in L^{\infty}(Q)$, $q \geqslant 1$, and the degenerate coefficient $a(x)$ satisfies \textbf{Assumption I}.

As before, let us introduce the input-output operator $\Psi$ defined by
$\Psi(f)(x)=u(x,T;f), \; x\in (0,1),$
where $u(x,t;f)$ solves \eqref{HJ}. Then
the backward parabolic problem can be reformulated as the following operator equation:
\begin{align}
    \Psi(f)=u_{T}, \qquad f\in L^2_{\frac{1}{a}}(0,1). 
\end{align}
Let $u^{\delta}_T$ be a noisy measurement corresponding to the exact datum $u_T$.

Since the equation \eqref{HJ} is nonlinear, we propose an alternative approach for the identification problem instead of the adjoint state method presented in Section \ref{s1}. For simplicity, we take $d(x,t)=0$ and $c(x,t)=0$ for drift and potential coefficients and $q=2$ in the next numerical simulation.\\
\subsubsection{\textbf{Van Cittert iteration}}
Following \cite{Carasso}, we consider the Van Cittert iteration \cite{Van} given by
\begin{equation}\label{vancittert}
    f_{n+1}(x)=f_{n}(x)+\gamma \left(u_T^\delta -\Psi(f_{n})(x)\right), \qquad n\geqslant 1,
\end{equation}
where $\gamma>0$ is a relaxation parameter and $f_1(x)=\gamma u^{\delta}_T$ is the first iterate. The Van Cittert iteration is simple to implement. It was used in image restoration, and it is useful in applications such as astronomy, medical imaging, etc. We refer to \cite[Chapter 4]{LB91} for more details. This iteration has been adapted for some 2D nonlinear backward parabolic problems in \cite{Car13}.

In practice, measurements always contain noise; instead of $u_T$ we have $u_T^\delta$ such that 
$$\|u_T-u_T^\delta\|_\frac{1}{a}\leqslant \delta \qquad \text{for}\; \delta \geqslant 0.$$
Since the problem is ill-posed in general, a small perturbation in data can $u_T$ cause large errors in the solution. To tackle this challenge, the regularization was implemented by stopping the iterations as soon as the value of the residual error becomes just below the noise level; see, e.g., \cite{Alifanov2012,cao2019determination, cao2020simultaneous,Ozisik2000}. As a stopping criterion, we choose the first iteration $n$ such that
$$r_n=\|u_T^\delta-\Psi(f_n)\|\leqslant \epsilon'+\rho \mathcal{E},$$
where $\epsilon'$ is the tolerance imposed if there is no noise, $\mathcal{E}$ is the magnitude of noise, and $\rho$ is a constant strictly greater than $1$, which can be fixed at $1.01$.

\subsubsection{\textbf{Numerical experiments}}
We approximate the solution of the nonlinear equation \eqref{HJ} on the space-time domain $[0,1]\times [0,T]$ using the finite difference method. We discretize $[0,1]\times [0,T]$ into a uniform grid $x_{j}=jdx $ for $j=0,\cdots, N_x+1$, with a spatial step size $dx=\frac{1}{N_x+1} ,$ and $t_n=ndt$ for $n=0,\cdots , N_t+1,$ with a time step size $dt=\frac{T}{N_t+1}$. We denote by $ u^n_{j}$ the approximate solution at node  $x_{j} $ at time $t_n,$ and  $ a_{j}=a(x_{j}) $. The first-order and second-order derivatives of $u $ in \eqref{1} can be approximated  by:
$$u_t (x_j,t_n) \approx  \frac{u_{j}^{n+1}-u_{j}^{n}}{dt}$$
and 
$$u_{xx} (x_j,t_n) \approx \frac{1}{2}\bigg(\frac{u_{j+1}^{n+1}-2u_j^{n+1}+u_{j-1}^{n+1}}{dx^2}+\frac{u_{j+1}^{n}-2u_j^{n}+u_{j-1}^{n}}{dx^2} \bigg)$$
To approximate $u_x(x_j,t_n)$ in the nonlinear part, we use central differences
$$u_x(x_j,t_n)\approx  \frac{u^{n+1}_{j+1}-u^{n+1}_{j-1}}{2dx}.$$
Then, we obtain the following discrete problem:
\begin{empheq}[left = \empheqlbrace]{alignat=2}
\begin{aligned}\label{discsystem}
& \frac{u_{j}^{n+1}-u_{j}^{n}}{dt} - \frac{a_{j}}{2}\left(\frac{u_{j+1}^{n+1}-2u_j^{n+1}+u_{j-1}^{n+1}}{dx^2}+\frac{u_{j+1}^{n}-2u_j^{n}+u_{j-1}^{n}}{dx^2}\right)\\&\hspace{5cm} +\frac{b_j}{2}\left(\frac{u^{n+1}_{j+1}-u^{n+1}_{j-1}}{2dx}\right)^2 =0,\\
&\text{for} \; 1\leq j \leq N_x,\\
&u_0=u_{N_x+1}=0,
\end{aligned}
\end{empheq}
For $n\in \{1,2,\ldots,N_t\}$, we denote $u^n=\{u^n_j\}_{1\leqslant j\leqslant N_x}$. For each time step, we must solve a nonlinear system of the form
\begin{align}\label{nonlinequ}
    F(u^{n+1},u^n)=0,
\end{align}
where $F(\cdot,u^n): \mathbb{R}^{N_x}\longrightarrow \mathbb{R}^{N_x} $ is a nonlinear function that depends on the previous time step $u^n$, and $u^{n+1}\in \mathbb{R}^{N_x}$ denotes the unknown vector of interior values at the new time step. To solve \eqref{nonlinequ}, we use Newton's method.

           
\begin{algorithm}[H]
\caption{Algorithm with Newton’s Method}
\begin{algorithmic}[1]
    \State Given an initial guess $w_0,$ tolerance $\epsilon_1,$ maximum number of iterations $N_{\text{max}},$ and the previous time step $u^n$\;
    \State Set $k=0$\;
            \State \textbf{While} $k<N_{\text{max}}$ \textbf{do}
            \State \quad\textbf{if} $\vert F(w_k,u^n) \vert<\epsilon_1$ \textbf{then}
            \State \qquad\textbf{return} $w_k$
            \State \quad\textbf{end if}
            \State \quad Solve $\mathcal{J}\delta w_k=-F(w_k,u^n)$ with respect to $\delta w_k $, where $\mathcal{J}$ is the Jacobian matrix of $F(\cdot,u^n)$
            \State \quad $w_{k+1}\leftarrow w_k-\delta w_k$
            \State \quad $k\leftarrow k+1$
            \State \textbf{end while}
            \State \textbf{return $w_k$}
\end{algorithmic}
    
\end{algorithm}
 We set $\gamma=0.5$ for the relaxation parameter, $a(x)=(x(1-x))^4$ , $T=0.03$ and $N_x=N_t=100$ in the following numerical tests.

\begin{example}\label{ex3}
We take the exact initial datum as
$$f(x)=(x(1-x))^2\sin(\pi x), \qquad x\in (0,1),$$
and consider the following coefficient for the Hamiltonian term:
$$ b(x,t)=b(x)=(x(1-x))^2.$$ 
In the noise-free case, i.e., $p=0\%$; see Figure \ref{fig3}, the algorithm required only $n = 12 $ iterations to achieve a lower error. 

\begin{figure}[H]
    \centering
    {{\includegraphics[scale=0.5]{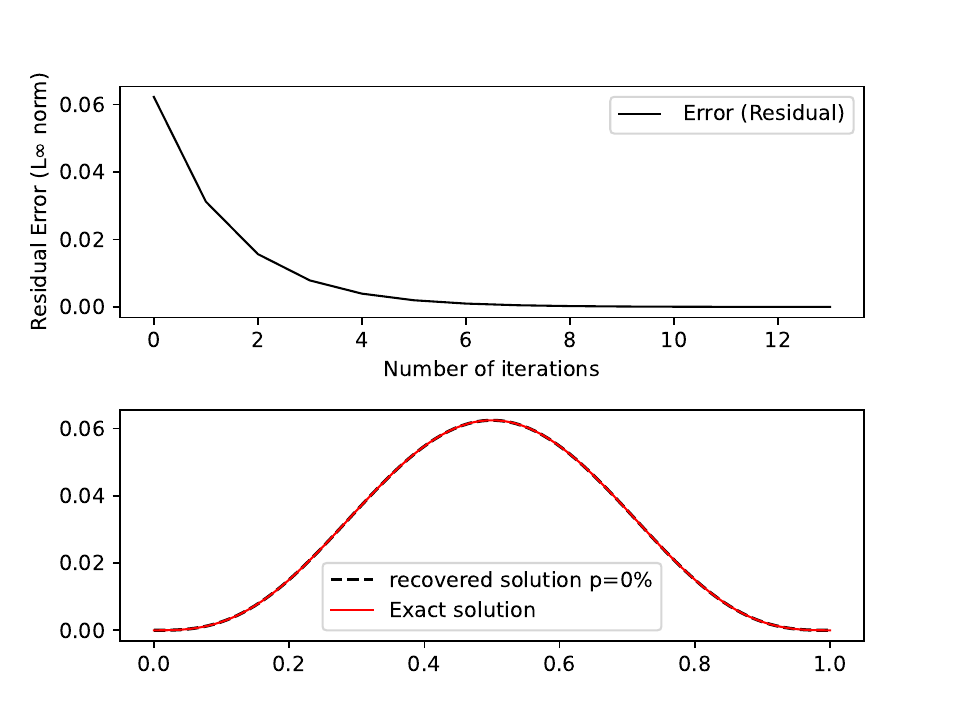} }}%
    \caption{(Bottom) Exact and recovered initial data after $12$ iterations from noise-free data. (Top) Residual error in terms of iterations}%
    \label{fig3}%
\end{figure}
\vspace{-0.5cm}
\begin{figure}[H]
\centering
\begin{subfigure}{0.5\textwidth}
    \centering
    \includegraphics[width=\textwidth]{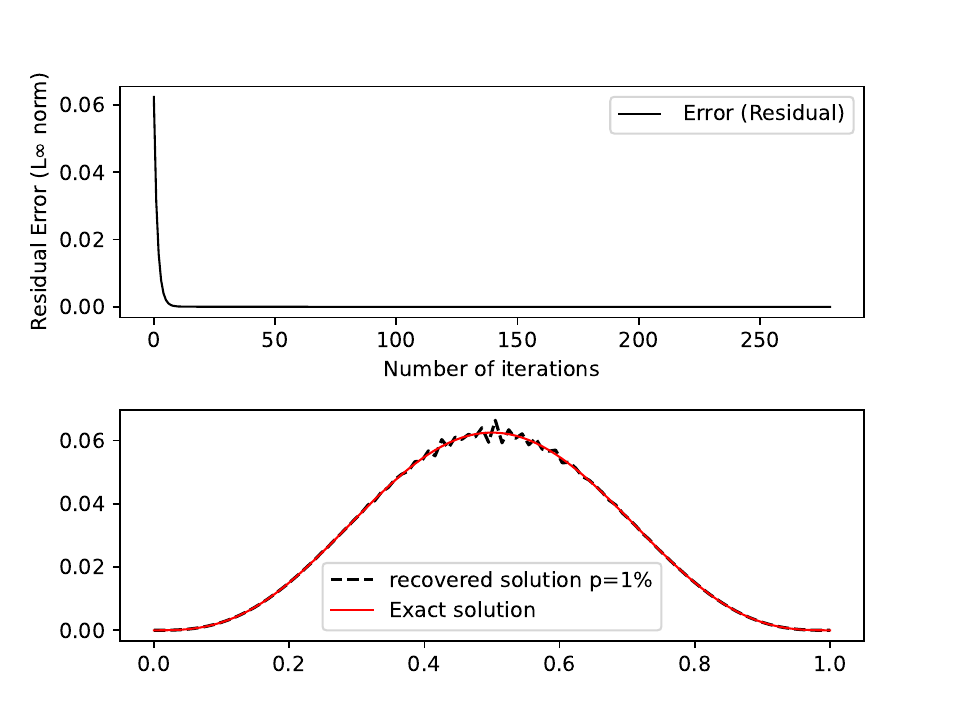}
    \caption{Exact and recovered initial data after $278$ iterations for $p=1 \%$ without regularization}
    \label{11NOISE}
\end{subfigure}
\hfill
\begin{subfigure}{0.48\textwidth}
    \centering
    \includegraphics[width=\textwidth]{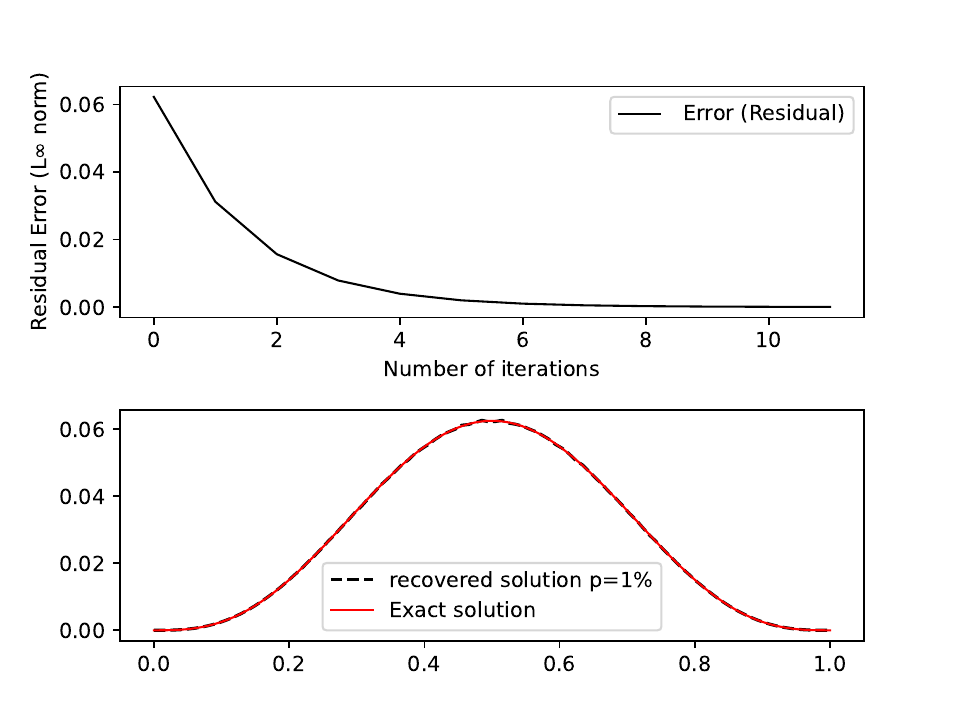}
    \caption{Exact and recovered initial data after $10$ iterations for $p=1 \%$ with early stopping}
    \label{12Noise}
\end{subfigure}
\caption{Comparison between the standard iteration and the early stopping for $p=1\%$}
\label{1 noise}
\end{figure}
\vspace{-0.5cm}
Figure \ref{1 noise} gives the comparison of the algorithm with and without regularization by early stopping for a noise level of $1\%$. As the process progresses, the residual error decreases but remains unstable due to noise (Figure \ref{11NOISE}). However, using an early stopping (Figure \ref{12Noise}) changes the situation. By interrupting the calculations after only $10$ iterations (i.e., as soon as the error falls below the noise level), the recovered solution is much closer to the exact solution.
\begin{figure}[H]
\centering

\begin{subfigure}{0.48\textwidth}
    \centering
    \includegraphics[width=\textwidth]{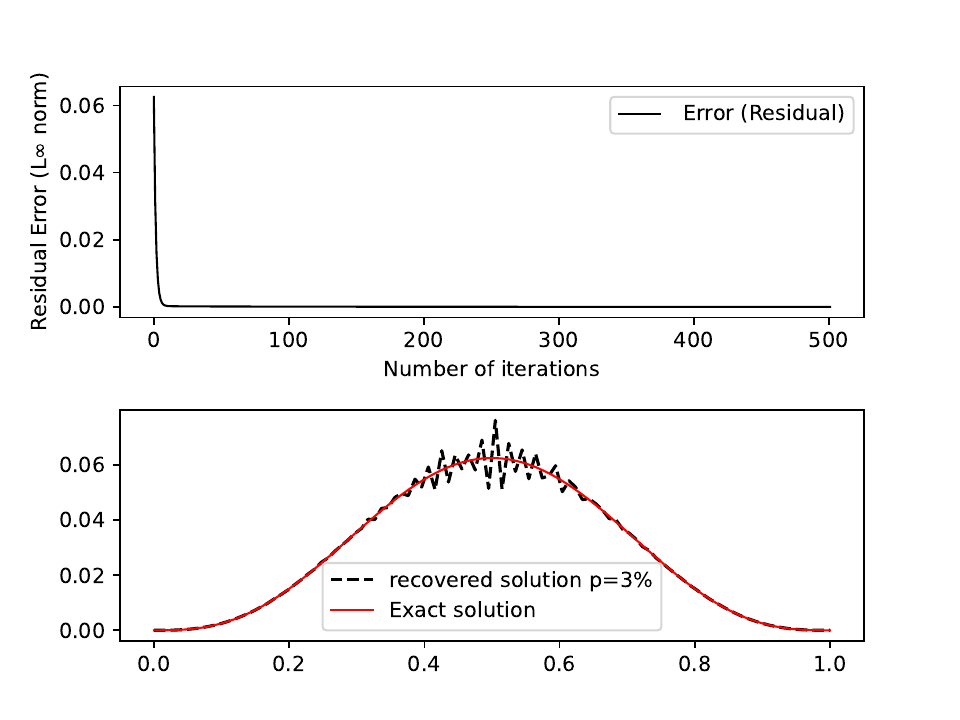}
    \caption{Exact and recovered initial data after $500$ iterations for $p=3 \%$ without regularization}
    \label{noise 111}
\end{subfigure}
\hfill
\begin{subfigure}{0.48\textwidth}
    \centering
    \includegraphics[width=\textwidth]{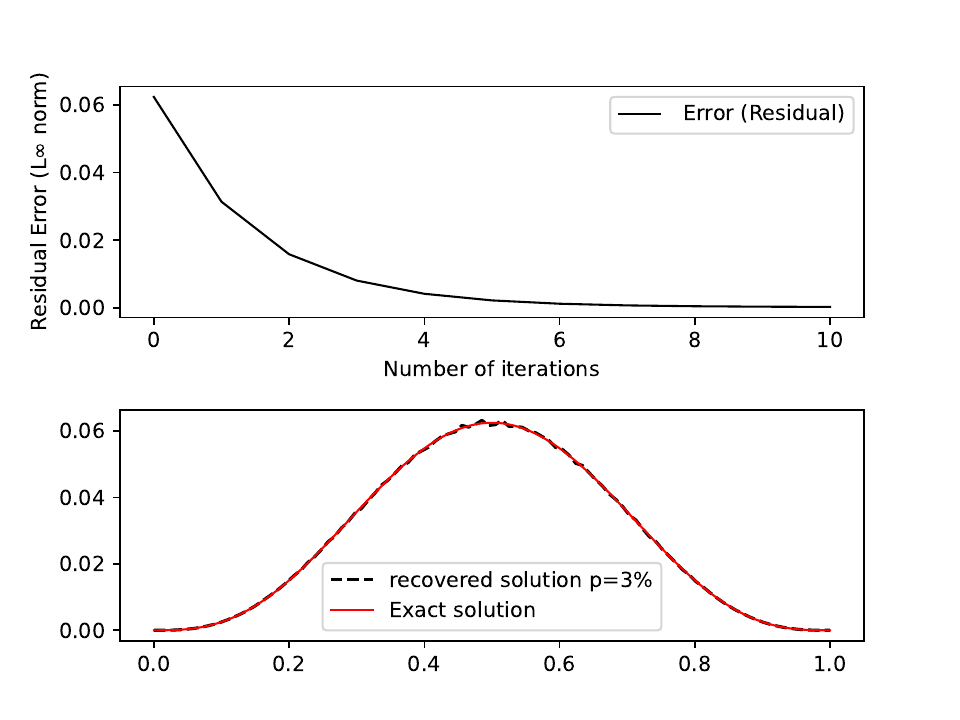}
    \caption{Exact and recovered initial data after $10$ iterations for $p=3 \%$ with early stopping }
\end{subfigure}
\caption{Comparison between standard iteration and early stopping for $p=3\%$}
\label{3 noise}
\end{figure}
 For a noise level of $3\%$; see Figure \ref{3 noise}, the behavior remains the same. Without regularization, the obtained solution shows significant numerical instability (see Figure \ref{noise 111}).
\end{example}

\begin{remark}
The aforementioned numerical tests indicate the usefulness of the Van Cittert method after a finite number of iterations, despite its possible lack of convergence.
\end{remark}

 \section{Conclusion and final comments}\label{sec6}
The backward problem of determining unknown initial data in degenerate viscous Hamilton-Jacobi equations from final-time measurements is of significant interest in various applied science applications. While models with nondegenerate diffusion coefficients have been extensively studied within well-established theoretical frameworks, much less attention has been given to viscous Hamilton-Jacobi equations involving degenerate diffusion coefficients. In such cases, both the theory and numerical methods remain at an early stage due to several substantial challenges.

First, the degeneracy of the diffusion coefficient makes the well-posedness and regularity of the viscous Hamilton-Jacobi equations significantly more subtle. This issue has been successfully addressed in our paper by working in appropriately weighted $L^2$ spaces and using norms that account for the degeneracy. On the numerical side, solving degenerate equations is more delicate, as it requires refined schemes for both discretization and integration. In our setting, the boundary degeneracy, combined with Dirichlet boundary conditions, allowed us to overcome these difficulties. However, the case of interior degeneracy or Neumann boundary conditions would be a worthwhile direction for further investigation.

Regarding the backward problem associated with the nonlinear viscous Hamilton-Jacobi equation \eqref{23}, we have proven the stability estimate (Theorem \ref{stabHJ}) for the Hamiltonian exponent $q\ge 1$ thanks to inequality \eqref{kineq}. Therefore, it is natural to ask whether one can prove similar stability results for the case $0<q<1$.

Finally, our analysis has been carried out for the one-dimensional interval $(0,1)$ in the weighted space $L^2_{\frac{1}{a}}(0,1)$, where a nondivergence form operator has been studied naturally. Extending our results to a multi-dimensional domain $\Omega \subset \mathbb{R}^d$ ($d\ge 2$ is an integer) remains an interesting direction for future research. The Carleman estimate approach we developed is promising, but establishing the appropriate functional framework and conducting the numerical analysis in higher dimensions are more subtle.

\end{document}